\documentclass[12pt]{article}
\usepackage[pdftex, bookmarksopen=true, bookmarks=true, unicode, setpagesize]{hyperref}
\hypersetup{colorlinks=true, linkcolor=black, citecolor=black}

\usepackage{amssymb, amsmath, amsthm}

\usepackage{xcolor}

\allowdisplaybreaks
\usepackage{cite}

\newtheorem{theorem}{Theorem}[section]
\newtheorem{corollary}[theorem]{Corollary}
\newtheorem{lemma}[theorem]{Lemma}
\newtheorem{proposition}[theorem]{Proposition}

\theoremstyle{remark}

\theoremstyle{remark}
\newtheorem{example}{Example}
\theoremstyle{remark}
\newtheorem{remark}[theorem]{Remark}

\begin{document}

\begin{center}{\Large \bf Umbral calculus over a vector space}
\end{center}

{\large Abdullah Alharthi}\\ Department of Mathematics, Swansea University, Bay Campus, Swansea SA1 8EN, U.K.;
e-mail: \texttt{2252788@swansea.ac.uk}\vspace{2mm}

{\large Eugene Lytvynov}\\ Department of Mathematics, Swansea University, Bay Campus, Swansea SA1 8EN, U.K.;
e-mail: \texttt{e.lytvynov@swansea.ac.uk}\vspace{2mm}

{\small
\begin{center}
{\bf Abstract}
 \end{center}
 
\noindent Let $V$ be a vector space over $\mathbb F=\mathbb R$ or $\mathbb C$. We develop a basis-free umbral calculus over~$V$. We define the vector space of polynomials over $V$, and polynomial sequences in it. We discuss shift-invariant operators acting in polynomials over $V$. We define polynomial sequences of binomial type and Sheffer sequences over $V$. We provide equivalent characterizations of these polynomial sequences. We prove two recurrence formulas for Sheffer sequences. With each Sheffer sequence, we associate a linear operator acting in polynomials over $V$, which we call a Sheffer operator. We prove that the set of Sheffer operators is a group for the usual product of linear operators, which is isomorphic to the Riordan group of pairs of formal tensor power series in a variable from $V$. Under the assumption that $V$ is an algebra, we 
lift every Sheffer sequence over $\mathbb F$ to a Sheffer sequence over $V$. We provide examples of such  lifting. }\vspace{2mm}
 
 {\bf Keywords:} Umbral calculus over a vector space; polynomial sequence of binomial type over a vector space; Sheffer sequence over a vector space; Riordan group over a vector space. 
 
  \vspace{2mm}

{\bf 2020 MSC. Primary:} 05A40. {\bf Secondary:} 05A15, 20H20.

\section{Introduction}

\subsection{What is the umbral calculus over a vector space}

Umbral calculus  is essentially the theory of Sheffer polynomial sequences (characterized by the exponential form of their generating function) and associated  linear operators acting in polynomials. The class of Sheffer sequences includes the sequences of binomial type and  Appell sequences.  After a long period when  umbral calculus was used for purely formal calculations, the theory became rigorous in the 1970s due to the seminal 
 works of  G.-C.~Rota, S. Roman and their co-authors. Their theory is nowadays called the modern  umbral calculus, see e.g.\ the monographs \cite{Roman,C} or \cite[Chapter~4]{KRY}.  While umbral calculus has its origins in combinatorics, it also found applications in  theory of special functions, approximation theory, probability and statistics, topology and physics, see e.g.\ the survey paper \cite{DiBL} and the references therein. 
 
 An important 
 object of studies of  umbral calculus is the umbral composition, which 
equips the set of all Sheffer  sequences with a group structure. This group is isomorphic to the Riordan group of infinite lower triangular matrices, see \cite{SGWW,ShapiroBook,HeHsuShiue,WangZhang}

A significant volume of research  has been devoted to extensions of umbral calculus to the multivariate case, see e.g.\ \cite{RomanIII} and Section 4 in \cite{DiBL} for a list of references. However, this research had a significant drawback of being basis-dependent. 

Concrete examples of Sheffer  sequences over certain  infinite dimensional topological vector spaces (in particular, nuclear spaces, Hilbert spaces, (LB)-spaces) appeared in analysis, probability and mathematical physics on numerous occasions,  see e.g.\ \cite{ADKS,BK,Hida,FLO,Lytvynov2003,KSWY,Grothaus,ItoKubo,Stirlingoperators,Jansen1} and the references therein.

The paper \cite{DBLR} was a pioneering  work in which some elements of basis-independent umbral calculus over a separable Hilbert space were developed.   However, no general definition of a binomial sequence or a Sheffer sequence over a Hilbert space was given in \cite{DBLR}. See also the follow-up paper \cite{DL}.

  The paper \cite{FKLO} developed foundations of infinite-dimensional umbral calculus over the nuclear space of smooth compactly supported functions on $\mathbb R^d$. It should  be noted that, while the studies in \cite{FKLO} were basis-free, the theory still significantly used the specific structure of the underlying nuclear space.  
  
  The main aim of the present paper is to develop a basis-free umbral calculus over a generic vector space. To achieve this, only arguments from combinatorics and (to some extent) linear algebra will be used.   
  
Let us briefly outline what we mean under the  umbral calculus over a vector space. Let $V$ be a vector space over $\mathbb F=\mathbb R$ or $\mathbb C$. Let $V^*$ be the (algebraic) dual space of $V$, i.e., the vector space of linear functionals defined on $V$ with values in $\mathbb F$.  For $\omega\in V^*$ and $v\in V$, we denote by $\langle \omega,v\rangle$ the dual pairing between $\omega$ and $v$, i.e., the action of the functional $\omega$ onto $v$. 

We denote by $\mathcal P(V^*)$ the vector space of polynomials over $V$.
A function $p:V^*\to\mathbb F$ belongs to $\mathcal P(V^*)$  if and only if it can be written as $p(\omega)=\sum_{k=0}^n \langle\omega^{\otimes k},f_k\rangle$, 
 where $f_k\in V^{\odot k}$ and $\omega^{\otimes k}\in (V^{\odot k})^*$ for $\omega\in V^*$. Here, $V^{\odot k}$ is the $k$th  symmetric tensor power of $V$, and we denote $V^{\odot 0}:=\mathbb F$. 
Equivalently, a function $p:V^*\to\mathbb F$ belongs to $\mathcal P(V^*)$  if and only if there exist vectors $e_1,\dots,e_N\in V$ and a multivariate polynomial $\tilde p:\mathbb F^N\to\mathbb F$ such that $p(\omega)=\tilde p(\langle\omega,e_1\rangle,\dots,\langle \omega,e_N\rangle)$. In the case where the vector space~$V$ is finite-dimensional, we can choose 
$e_1,\dots,e_N\in V$ to be a basis in $V$.  Thus, if $V=\mathbb F^N$, we can identify $V^*$ with $\mathbb F^N$ and $\mathcal P(\mathbb F^N)$ is the usual space of multivariate polynomials in $N$ variables with coefficients from $\mathbb F$. 

A polynomial sequence in $\mathcal P(V^*)$ is defined through a sequence of linear operators $P_{kn}:V^{\odot n}\to V^{\odot k}$ with $0\le k\le n$ such that each operator $P_{nn}$ is invertible. Elements of a polynomial sequence are polynomials of the form $\sum_{k=0}^n\langle\omega^{\otimes k},P_{kn}f_n\rangle$, where $f_n\in V^{\odot m}$. In the one-dimensional case, $V=\mathbb F$, each $P_{kn}$ is just the coefficient by the $k$th power of the variable in the polynomial of degree $n$.

In umbral calculus, formal power series play a central role.  In the umbral calculus over $V$, we use formal tensor power series in variable from $V$ with values in a vector space~$W$ (typically $W=V^{\odot n}$).  Such a formal series has the form $A(v)=A_0+\sum_{k=1}^\infty A_kv^{\otimes k}$, where $A_0\in W$ and each $A_k:V^{\odot k}\to W$ is a linear operator; compare with \cite{FKLO}.

With the help of  formal tensor power series, we define the generating function $G(\omega,v)$ of a polynomial sequence over $V$. Here, for each fixed $\omega\in V^*$, $G(\omega,v)$ is a formal tensor power series in variable $v\in V$ with values in $\mathbb F$. 
We define a Sheffer sequence over $V$ as a polynomial sequence for which $G(\omega,v)=\exp\big(\langle \omega, B(v)\rangle)A(v)$, where $B(v)=\sum_{n=1}^\infty B_nv^{\otimes n}$ is a $V$-valued formal tensor power series in which the linear operator $B_1:V\to V$ is invertible, and $A(v)$ is an $\mathbb F$-valued formal tensor power series with $A(0)\ne0$. 
The umbral calculus over vector space~$V$ studies the class of Sheffer sequences over $V$ and 
associated linear operators acting in $\mathcal P(V^*)$. In line with the finite-dimensional setting, the class of Sheffer sequences over $V$ contains the subclass of sequences of binomial type (for which $A(v)$ is a non-zero constant) and Appell sequences (for which $B(v)=B_1v$).  

To develop the umbral calculus over $V$, one needs, in particular,  to study shift-invariant operators. These are linear operators acting in $\mathcal P(V^*)$ which commute with all shift operators  $E(\theta)$ ($\theta\in V^*$). Here $E(\theta)$ is a linear operator acting in $\mathcal P(V^*)$ defined by $(E(\theta)p)(\omega)=p(\omega+\theta)$ for $p\in\mathcal P(V^*)$.

Most of the known important examples of Sheffer sequences over an infinite-dim\-ens\-ional space $V$ appeared through a certain procedure of lifting of a Sheffer sequence over $\mathbb F$ to a Sheffer sequence over $V$. In this paper, we propose a general definition of such a lifting, based on the assumption that the underlying vector space~$V$ is an algebra. 
This allows us not only to cover all the existing constructions (compare with \cite[Sections~5 and~7]{FKLO}), but also to come up with new interesting examples of lifting, which will hopefully find applications in the future.

\subsection{Comparison with prior studies}
  
  We already mentioned above that in all prior studies of Sheffer sequences over an infinite dimensional space $V$, the space $V$ was assumed to be a certain locally convex topological vector space (l.c.s.). Hence, instead of using $V^{\odot n}$, the $n$th algebraic symmetric tensor power of $V$, one used a certain l.c.s.\  $V^{\widehat\odot n}$, of which $V^{\odot n}$ is a  dense subspace.  This naturally led to the assumption that the linear operators $P_{kn}:V^{\widehat \odot n}\to V^{\widehat\odot k}$ in the definition of a polynomial sequence are continuous.   While such an operator $P_{kn}$ is fully determined by its values on the dense subspace $V^{\odot n}$, the image of $V^{\odot n}$ under $P_{kn}$ is not necessarily a subset of $V^{\odot k}$. So, one might think that our theory is not as reach as it could be if we dealt with locally convex topological vector spaces $V$ and $V^{\widehat\odot n}$.  However, it  immediately follows from the definition of a Sheffer sequence that, even if we did assume that
 $V$ is a l.c.s\ and used certain topological tensor powers $V^{\widehat\odot n}$, we would still have the property that $P_{kn}V^{\odot n}\subset V^{\odot k}$.  Hence, our theory  includes all special cases of a l.c.s.\ $V$ considered in the literature.

\subsection{Organization of the paper and main results} 

The paper is organized as follows. In Section~\ref{gfxsthshtj}, we discuss preliminaries related to the symmetric tensor power of a vector space $V$,  formal tensor power series in variable from~$V$, and the Riordan group over $V$ whose elements are pairs of formal tensor power series. 

In Section~\ref{ctrdstrst6e} we define and study polynomial sequences over $V$ and shift-invariant operators. The main result of this section is the operator expansion theorem (Theorem~\ref{bgnfm758}), which provides an expansion of each shift-invariant operator through a shift-invariant lowering differential for a polynomial sequence. In the one-dimensional case, $V=\mathbb F$,  this result is known as the first expansion theorem  \cite[Section~3]{RotaKahanerOdlyzko}. 

In Section~\ref{cgfdxyrdrek}, we define a polynomial sequence over $V$ of binomial type and prove equivalent characterizations of such a polynomial sequence  (Theorem~\ref{45678765g}). In particular, we show that a polynomial sequence with $P_{0n}=0$ for all $n\in\mathbb N$ 
is of binomial type if and only if its lowering operators are shift-invariant.  Next, in Section~\ref{cxzsrarewtytyrf}, we prove equivalent characterizations of a Sheffer sequence  over $V$ (Theorem~\ref{ctrst5u}). Both Theorems~\ref{45678765g} and~\ref{ctrst5u} extend the classical results of umbral calculus (in the case $V=\mathbb F$), see e.g.\ \cite[Sections~4.3 and~4.4]{KRY}.  For a special choice of a l.c.s.\ $V$, similar characterizations were obtained in \cite{FKLO}. 
We also find an explicit formula for each Sheffer polynomial of degree $n$ (Corollary~\ref{dxzdrzdszrsxddx}). This formula uses the summation over all set partitions of $\{1,\dots,n\}$.  

In Section~\ref{cxdzrzrzarwar}, we prove two recurrence formulas for Sheffer sequences. The first formula in the case $V=\mathbb F$ is  Roman's recurrence formula for the raising operator \cite[Theorem~3.7.1]{Roman}, which in the special case of a binomial sequence was proved in  \cite[Section~4, Theorem~4~(4)]{RotaKahanerOdlyzko} and called therein the Rodrigues formula. The second formula provides a representation of a Sheffer polynomial of degree $n$, multiplied by a monomial, through Sheffer polynomials of degrees $0,1,\dots,n+1$.  The way we present both recurrence formulas in Section~\ref{cxdzrzrzarwar} is new even in the multivariate setting ($\operatorname{dim}(V)<\infty$). 

In Section~\ref{cfdrseepioioi}, we associate with each Sheffer sequence over $V$ a linear operator acting in $\mathcal P(V^*)$, which we call a Sheffer operator. We prove that the set of Sheffer operators forms a group for the usual product of linear operators, and this group is isomorphic to the Riordan group of pairs of formal tensor power series introduced in Section~\ref{gfxsthshtj}. This extends the results of papers  \cite{HeHsuShiue,WangZhang} which deal with the one-dimensional setting ($V=\mathbb F$) and a result of the paper \cite{FLO}, which deals the case where $V$ is a certain l.c.s.

Finally, in Section~\ref{gxzrfsarh}, under the assumption that the vector space $V$ is an algebra, we define and study Sheffer sequences over $V$ that are obtained from Sheffer sequences over~$\mathbb F$ through a lifting procedure. We also discuss a number of examples of such a setting, which include examples  already studied  in the literature as well as new examples.

\section{Preliminaries}\label{gfxsthshtj}

\subsection{Symmetric tensor product}

Let $\mathbb F=\mathbb R$ or $\mathbb C$, and let $V_i$ ($i=1,2$) be vector spaces over $\mathbb F$. We denote by $V_1\otimes V_2$ the (algebraic) tensor product of $V_1$ and $V_2$, see e.g.\ \cite[Chapter~14]{Roman_Algebra} or \cite[Chapter~1]{Ryan}. Recall that $V_1\otimes V_2$ is a vector space over $\mathbb F$, and it is spanned by  vectors $v_1\otimes v_2$ with $v_1\in V_1$ and $v_2\in V_2$. The tensor product is associative, i.e., for three vector spaces $V_1$, $V_2$, $V_3$, the vector spaces $(V_1\otimes V_2)\otimes V_3$ and $V_1\otimes(V_2\otimes V_3)$ can be naturally identified, thus yielding the vector space $V_1\otimes V_2\otimes V_3$.  Observe that, for a vector space $V$, both tensor products $\mathbb F\otimes V$ and $V \otimes\mathbb F$ can be naturally identified with $V$. 
For $n\ge2$ and $v\in V$, we denote by $V^{\otimes n}$ and $v^{\otimes n}$ the  $n$th tensor power of $V$ and $v$, respectively.   We will also denote $V^{\otimes 1}:=V$, $v^{\otimes 1}:=v$, $V^{\otimes 0}:=\mathbb F$, and $v^{\otimes 0}:=1$. 

The {\it symmetric tensor product of vectors $v_1,v_2,\dots,v_n\in V$} is defined by
$$v_1\odot v_2\odot\dots\odot v_n:=\frac1{n!}\sum_{\pi\in S_n}v_{\pi(1)}\otimes v_{\pi(2)}\otimes \dots \otimes v_{\pi(n)},$$
where $S_n$ is the symmetric group of order $n$.  
Note that, for each $\pi\in S_n$,
\begin{equation}\label{xeras4waq53y}
v_{\pi(1)}\odot v_{\pi(2)}\odot\dots\odot v_{\pi(n)}=v_1\odot v_2\odot\dots\odot v_n.
\end{equation}
The {\it $n$th symmetric tensor power of $V$}, denoted by $V^{\odot n}$, is defined as the subspace 
of~$V^{\otimes n}$  that is spanned by vectors $v_1\odot v_2\odot\dots\odot v_n$ with  $v_1,v_2,\dots,v_n\in V$. 

The following polarization formula holds: for any $v_1,v_2,\dots,v_n\in V$,
\[
v_1\odot v_2\odot \dots\odot v_n=\frac{1}{2^{n}n!} \sum_{ k_1 \in\{-1,1\}, \ldots,  k_n \in\{-1,1\}} k_1 k_2 \dots k_n (k_1 v_1+k_2 v_2 + \dots + k_n v_n)^{\otimes n}.
\]
 This formula immediately implies that
 \begin{equation}\label{cxtuewu533}
V^{\odot    n}=\operatorname{l.s.}\{v^{\otimes n}\mid v\in V\}.
\end{equation}
Here and below,  $\operatorname{l.s.}$ denotes the linear span.

For vector spaces $V$ and $W$, we denote by $\mathcal L(V,W)$ the vector space of all linear operators (maps) acting from $V$ into $W$. If $V=W$, we denote $\mathcal L(V):=\mathcal L(V,V)$. 

For vector spaces $V_i$, $W_i$ and linear operator $A_i\in\mathcal L(V_i,W_i)$ ($i=1,\dots,n$), one defines the tensor product $A_1\otimes A_2\otimes\dots\otimes A_n\in\mathcal L(V_1\otimes V_2\otimes\dots\otimes V_n, W_1\otimes W_2\otimes\dots\otimes W_n)$  by
\begin{equation*}
(A_1\otimes A_2\otimes\dots\otimes A_n)v_1\otimes v_2\otimes\dots\otimes v_n:=(A_1v_1)\otimes (A_2v_2)\otimes\dots\otimes(A_nv_n),\end{equation*}
where $v_i\in V_i$. For vector spaces $V$ and $W$, the  {\it symmetric tensor product of  operators $A_1,A_2,\dots,A_n\in\mathcal L(V,W)$} is    the linear operator $A_1\odot A_2\odot\dots\odot A_n\in\mathcal L(V^{\odot n},W^{\odot n})$ defined  by
\begin{equation*} A_1\odot A_2\odot\dots\odot A_n:= \frac1{n!}\sum_{\pi\in S_n}A_{\pi(1)}\otimes A_{\pi(2)}\otimes \dots \otimes A_{\pi(n)}\restriction_{V^{\odot n}}\,.\end{equation*}
Thus, for each $v\in V$,
\begin{equation*}
(A_1\odot A_2\odot\dots\odot A_n)v^{\otimes n}=(A_1v)\odot(A_2v)\odot\dots\odot(A_nv).
\end{equation*}
More generally, for linear operators $A_i\in\mathcal L(V^{\odot l_i},W)$ with $l_i\in\mathbb N$ ($i=1,\dots,n$) and $m:=l_1+l_2+\dots+l_n$, we define the linear operator $A_1\odot A_2\odot\dots\odot A_n\in\mathcal L(V^{\odot m},W^{\odot n})$ that satisfies, for all $v\in V$,
\begin{equation}\label{dxzdszsrerawr4}
(A_1\odot A_2\odot\dots\odot A_n)v^{\otimes m}
=(A_1v^{\otimes l_1})\odot(A_2v^{\otimes l_2})\odot\dots\odot(A_nv^{\otimes l_n}).
\end{equation}
Note that a formula similar to \eqref{xeras4waq53y} holds for the symmetric tensor product of linear operators. 

The  {\it dual of a vector space $V$} is defined as the vector space $V^*:=\mathcal L(V,\mathbb F)$. For $\omega\in V^*$ and $v\in V$, we  denote by $\langle\omega,v\rangle$ the dual pairing between $\omega$ and $v$. For each $\omega\in V^*$ and $n\ge 2$, we obviously have $\omega^{\otimes n}\in (V^{\odot n})^*$ and 
$$\langle\omega^{\otimes n},v_1\odot v_2\odot\dots\odot v_n\rangle=\langle\omega,v_1\rangle\langle\omega,v_2\rangle\dotsm \langle\omega,v_n\rangle,\quad v_1,v_2,\dots,v_n\in V.$$
Note, however, that if the vector space $V$ is infinite-dimensional, $(V^*)^{\odot n}$ is a proper subspace of $(V^{\odot n})^*$.

\subsection{Formal tensor power series and the Riordan group}\label{cftxsdrszresrtes}

In this section, we will extend the concept of a formal tensor power series from \cite{FKLO,FLO}   to the case of vector spaces. 

Let $V$ and $W$ be vector spaces and let $A_n\in\mathcal L(V^{\odot n},W)$ ($n\ge2$). In view of formula~\eqref{cxtuewu533}, the linear operator $A_n$ is uniquely determined by its action on vectors~$v^{\otimes n}$ with $v\in V$. Let $A_0\in W$ and $A_n\in\mathcal L(V^{\odot n},W)$ for $n\in\mathbb N$. (We note that $W$ can be identified with $\mathcal L(V^{\odot 0},W)=\mathcal L(\mathbb F,W)$.) A {\it formal tensor power series in argument from~$V$ with values in $W$} is defined  as a formal series $A(v)=A_0+\sum_{n=1}^\infty A_nv^{\otimes n}$. 

Fix $v\in V$. Then for $t\in\mathbb F$, $A(tv)=A_0+\sum_{n=1}^\infty t^n A_nv^{\otimes n}$ is a formal power series in argument $t$ with coefficients from $W$.  Thus, each formal tensor power series $A(v)$ uniquely identifies the sequence of linear operators $A_n\in\mathcal L(V^{\odot n},W)$ ($n\in\mathbb N_0$). Observe that $A_0=A(0)$. 

 We denote by $\mathcal F(V,W)$ the set of all formal tensor power series in argument from~$V$ with values in $W$. Similarly to the case of usual formal power series (e.g.\ \cite[Chapter~1, Section~2]{Roman}), $\mathcal F(V,W)$  is a vector space.

In the case  $W=\mathbb F$, we will write an element of $\mathcal F(V,\mathbb F)$ as $A(v)=\sum_{n=0}^\infty\langle A_n,v^{\otimes n}\rangle$. Here $A_n\in(V^{\odot n})^*$ and $\langle A_0,v^{\otimes 0}\rangle:=A_0\in\mathbb F$. 

For each $A(v)=A_0+\sum_{n=1}^\infty A_nv^{\otimes n}\in\mathcal F(V,W)$ and $\omega\in W^*$, we define 
$$\langle\omega,A(v)\rangle:=\langle\omega,A_0\rangle+\sum_{n=1}^\infty\langle\omega, A_nv^{\otimes n}\rangle=\langle\omega,A_0\rangle+\sum_{n=1}^\infty \langle \omega A_n,v^{\otimes n}\rangle\in\mathcal F(V,\mathbb F).$$ 

Let $A^{(i)}(v)=\sum_{n=0}^\infty\langle A_n^{(i)},v^{\otimes n}\rangle\in \mathcal F(V;\mathbb F)$ ($i=1,2$). By formula~\eqref{dxzdszsrerawr4}, 
we have, for any $m,n\in\mathbb N$, 
\begin{equation}\label{vcytdy}
\langle A_m^{(1)},v^{\otimes m}\rangle\langle A_n^{(2)},v^{\otimes n}\rangle=\langle A_m^{(1)}\odot A_n^{(2)},v^{\otimes(m+n)}\rangle.
\end{equation} 
Hence, we define the product $A(v)=A^{(1)}(v)A^{(2)}(v)=\sum_{n=0}^\infty \langle A_n,v^{\otimes n}\rangle\in \mathcal F(V,\mathbb F)$ by 
$A_n:=\sum_{k=0}^n A^{(1)}_k\odot A^{(2)}_{n-k}$\,. The $\mathcal F(V;\mathbb{F})$ is obviously an algebra under addition and (commutative) multiplication of formal tensor power series.

More generally, for vector spaces $V$ and $W$ and  formal tensor power series $A^{(1)}(v)\in\mathcal F(V,\mathbb F)$ and $A^{(2)}(v)\in\mathcal F(V,W)$, we may similarly define a formal tensor power series\linebreak $A^{(1)}(v)A^{(2)}(v)\in\mathcal F(V,W)$. 

We denote by $\mathcal F_0(V)$ the set of all $A(v)\in \mathcal F(V,\mathbb{F})$ such that $A(0)\neq 0$.  It is easy to show that, for each $A(v)\in\mathcal F_0(V)$,  there exists $A^{-1}(v)\in \mathcal F_0(V)$ that satisfies $A(v)A^{-1}(v)=1$. Hence,  $\mathcal F_0(V)$ is an abelian group for the multiplicative product of formal tensor power series; compare with \cite[Proposition~A.1]{FKLO}.

 Let $B(v)=\sum_{n=1}^\infty B_nv^{\otimes n}\in\mathcal F(V,V)$ be such that $B(0)=0$.  In view of \eqref{dxzdszsrerawr4}, we define, for 
 $n\ge2$, 
$B(v)^{\otimes n}=\sum_{m=n}^\infty C_m v^{\otimes m}\in\mathcal F(V,V^{\odot n})$ 
by 
\begin{equation}\label{frsters5tsw5u} C_m:= \sum_{i_1,\dots,i_m \geq 1 \atop i_1+i_2+\dots+i_m=n} B_{i_1}\odot B_{i_2}\odot \dots \odot B_{i_m}. \end{equation}

Let $A(v)=A_0+\sum_{n=1}^\infty A_nv^{\otimes n}\in\mathcal F(V,W)$ and let $B(v)=\sum_{n=1}^\infty B_nv^{\otimes n}\in\mathcal F(V,V)$ be such that $B(0)=0$. In view of \eqref{frsters5tsw5u}, we define the  {\it composition of $A(v)$ and $B(v)$}, or the {\it substitution of $B(v)$ into $A(v)$}, by 
\begin{equation}\label{crstese5ws5}
A(v)\circ B(v)=A(B(v)):=A_0+\sum_{n=1}^\infty A_n B(v)^{\otimes n}=A_0+\sum_{n=1}^\infty C_nv^{\otimes n}
\in\mathcal F(V;W),
\end{equation}
where
\begin{equation}\label{vcfxdtrstea}
C_n:=\sum_{m=1}^n A_m \sum_{i_1,\dots,i_m \geq 1 \atop i_1+i_2+\dots+i_m=n} \big(B_{i_1}\odot B_{i_2}\odot \dots \odot B_{i_m}\big).\end{equation}

We denote by $\mathcal F_1(V)$ the set of all formal tensor power series $B(v)=\sum_{n=1}^\infty B_nv^{\otimes n}\in\mathcal F(V,V)$ such that $B(0)=0$ and the operator $B_1\in\mathcal L(V)$ is invertible.  By using formulas \eqref{crstese5ws5} and \eqref{vcfxdtrstea}, one can easily prove that $\mathcal F_1(V)$ is a (noncommutative) group for the operation of composition of formal tensor power series; compare with \cite[Proposition~A.11]{FKLO}. Observe that the identity element in $\mathcal F_1(V)$ is $B(v)=v$. For a general $B(v)\in\mathcal F_1(V)$, we will denote by $B^{\langle-1\rangle}(v)$ the inverse element of $B(v)$ in $\mathcal F_1(V)$.

Similarly, for a formal power series $A(t)=\sum_{n=0}^\infty a_nt^n\in\mathcal F(\mathbb F,\mathbb F)$
and a formal tensor power power series $B(v)\in\mathcal F(V,\mathbb F)$ with $B(0)=0$, we define the composition 
$A(B(v)):=a_0+\sum_{n=1}^\infty a_nB(v)^n\in\mathcal F(V,\mathbb F). $ 

We finish this section with a discussion of the semidirect product of the groups $\mathcal F_0(V)$ and $\mathcal F_1(V)$, compare with \cite[Section~3.4]{FLO}. We denote $\mathcal R(V):=\mathcal F_{0}(V)\times \mathcal F_{1}(V)$ and define a product $\ast$ in $\mathcal R(V)$ as follows: for any $(A^{(1)}(v),B^{(1)}(v)),(A^{(2)}(v),B^{(2)}(v))\in \mathcal R(V)$,
\begin{equation}\label{bvfrsxrea4qw}(A^{(1)}(v),B^{(1)}(v))\ast(A^{(2)}(v),B^{(2)}(v)):=\big(A^{(1)}(B^{(2)}(v))A^{(2)}(v), B^{(1)}( B^{(2)}(v))\big).\end{equation}
It is straightforward to check that $(\mathcal R(V),\ast)$ is a group. We will call $\mathcal R(V)$ the {\it Riordan group over the vector space $V$}; compare with \cite[Section~7.3]{ShapiroBook}.

By identifying $A(v)\in\mathcal F_0(V)$ and $B(v)\in\mathcal F_1(V)$ with $(A(v), v)$ and $(1, B(v))$, respectively, we easily see that
 $\mathcal F_0(V)$ is an (abelian) normal subgroup of $\mathcal R(V)$, and $\mathcal F_1(V)$ is a subgroup of $\mathcal R(V)$. Furthermore, each $(A(v), B(v))\in \mathcal R(V)$ admits a unique representation as a product of elements from $\mathcal F_0(V)$ and $\mathcal F_1(V)$:
 $$(A(v), B(v))=(A(B^{\langle-1\rangle}(v)), v)\ast(1, B(v)).$$
 Hence, $\mathcal R(V)$ is the semidirect product of $\mathcal F_0(V)$ and $\mathcal F_1(V)$, i.e., 
 \begin{equation}\label{cxdsresresuy}\mathcal R(V)=\mathcal F_0(V) \rtimes\mathcal F_1(V).
 \end{equation}
 
 \subsection{Derivative of a formal tensor power series}

Let $V$ and $W$ be vector spaces and let $A_n\in\mathcal L(V^{\odot n},W)$ ($n\in\mathbb N$). Consider the function
$\tilde A_n:V\to W$ defined by $\tilde A_n(v):=A_nv^{\otimes n}$. (The function $\tilde A_n$ is, in fact, a homogeneous polynomial of degree $n$ with values in $W$.)

Let $\zeta\in V$. The  directional derivative of the function $\tilde A_n$ in direction $\zeta$ is the function $\tilde A_n'(\cdot\,;\zeta):V\to W$ that satisfies, for each $\theta\in W^*$,
$$\langle \theta, \tilde A_n'(v;\zeta)\rangle=\frac d{dt}\Big|_{t=0}\langle \theta, \tilde A_n(v+t\zeta)\rangle. $$
As easily seen, 
\begin{equation}\label{dxzra4ywy}\tilde A_n'(v;\zeta)=nA_n(v^{\otimes(n-1)}\odot \zeta).\end{equation}
Note that the map $V^{\odot (n-1)}\ni f_{n-1}\mapsto nA_n(f_{n-1}\odot \zeta)$ belongs to $\mathcal L(V^{\odot(n-1)},W)$.

Let  $A(v)=A_0+\sum_{n=1}^\infty A_nv^{\otimes n}\in\mathcal F(V;W)$. In view of \eqref{dxzra4ywy}, we define the {\it directional  derivative of the formal tensor power series $A(v)$ in direction $\zeta$} by
\begin{equation}\label{cxtsteyew46u}
A'(v;\zeta):=\sum_{n=1}^\infty nA_n(v^{\otimes(n-1)}\odot \zeta)=A_1\zeta+\sum_{n=1}^\infty (n+1)A_{n+1}(v^{\otimes n}\odot\zeta)\in\mathcal F(V,W).
\end{equation}

Let $A(v)=A_0+\sum_{n=1}^\infty A_nv^{\otimes n}\in\mathcal F(V,W)$ and $B(v)=B_0+\sum_{n=1}^\infty B_nv^{\otimes n}\in\mathcal F(V,V)$. In view of \eqref{cxtsteyew46u}, the {\it derivative of $A(v)$ in direction $B(v)$} is defined by
\begin{align} A'(v;B(v)):&= A_1B(v)+\sum_{n=1}^\infty (n+1)A_{n+1}(v^{\otimes n}\odot B(v))\notag\\
&=A_1B_0+\sum_{n=1}^\infty \sum_{m=0}^n (m+1)A_{m+1}(\mathbf 1_m\odot B_{n-m})v^{\otimes n}.\notag
\end{align}
Here and below, $\mathbf 1_m$ denotes the identity operator in $V^{\odot m}$.

\section{Polynomial sequences over a vector space}\label{ctrdstrst6e}

\subsection{Polynomials and linear operators acting in polynomials}\label{dfgh5678h}

Let $V$ be a vector space over $\mathbb F$, and let $V^*$ be the dual of $V$. A function $p:V^*\to \mathbb{F}$ is called a {\it polynomial over $V$} if
\begin{equation}\label{cfgxtsu}
p(\omega)=\sum_{k=0}^n\langle \omega^{\otimes k},f_k\rangle=f_0+\sum_{k=1}^n\langle \omega^{\otimes k},f_k\rangle,\quad \omega \in V^*.\end{equation}   
Here $f_0\in \mathbb{F}$ and $f_k\in V^{\odot  k}$ for $k=1,\dots,n$. If $f_n\neq0$,   we say that the polynomial $p$ is {\it of degree~$n$}. 
A polynomial of the form $\langle \omega^{\otimes n},f_n\rangle$  will be called a {\it monomial of degree~$n$}.
We denote by $\mathcal P  (V^*)$  the vector space of all polynomials over $V$. 
 By \eqref{cxtuewu533}, we have
 \begin{equation}\label{fxzrsareysa}\mathcal P  (V^*)=\operatorname{l.s.}\big\{f_0,\ \langle \cdot^{\otimes n},v^{\otimes n}\rangle \mid f_0\in\mathbb F,\ v\in V,\ n\in \mathbb{N}\big\}.
\end{equation}

\begin{lemma}\label{cgtxststs5u}
A function $p:V^*\to\mathbb F$ belongs to $\mathcal P(V^*)$ if and only if there exist linearly independent vectors $e_1,\dots,e_N\in V$  ($N\in\mathbb N$) and a multivariate polynomial $\tilde p:\mathbb F^N\to\mathbb F$ such that $p(\omega)=\tilde p(\langle\omega,e_1\rangle,\dots,\langle\omega,e_N\rangle)$.
\end{lemma}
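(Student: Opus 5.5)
We prove both implications directly; the only nontrivial input is formula \eqref{fxzrsareysa}.

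\emph{Necessity.} Let $p\in\mathcal P(V^*)$. By \eqref{fxzrsareysa}, $p$ is a finite linear combination of a constant and monomials $\langle\omega^{\otimes n_j},v_j^{\otimes n_j}\rangle=\langle\omega,v_j\rangle^{n_j}$, with $v_j\in V$ for $j=1,\dots,J$. Let $U:=\operatorname{l.s.}\{v_1,\dots,v_J\}$ and choose a basis $e_1,\dots,e_N$ of $U$. If $U=\{0\}$, then $p$ is constant, and we take $N=1$ with any nonzero $e_1$ (assuming $V\ne\{0\}$; the trivial case $V=\{0\}$ is degenerate and handled separately). Write $v_j=\sum_{i}c_{ji}e_i$. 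Then
\[
\langle\omega,v_j\rangle^{n_j}=\Big(\sum_i c_{ji}\langle\omega,e_i\rangle\Big)^{n_j},
\]
which is a polynomial in $x_i=\langle\omega,e_i\rangle$. Summing over $j$ gives the required $\tilde p$.

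\emph{Sufficiency.} Given $\tilde p$ and vectors $e_i$ (linear independence is not needed for this direction), it suffices by linearity to treat a monomial $x_1^{k_1}\dotsm x_N^{k_N}$ with $k=k_1+\dots+k_N$. By the identity displayed before Section~2.2,
\[
\prod_i\langle\omega,e_i\rangle^{k_i}=\big\langle\omega^{\otimes k},\,e_1^{\odot k_1}\odot\dots\odot e_N^{\odot k_N}\big\rangle,
\]
where $e_i^{\odot k_i}$ means $e_i$ repeated $k_i$ times. This has the form \eqref{cfgxtsu} with $f_k\in V^{\odot k}$. Collecting terms by degree gives $p\in\mathcal P(V^*)$.

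No step is a real obstacle. The only point requiring care is to use \eqref{fxzrsareysa}, i.e.\ the polarization formula \eqref{cxtuewu533}, to reduce to powers of single vectors, rather than working with general $f_k\in V^{\odot k}$. Even that is optional: each $f_k$ is a finite sum of elementary tensors, so the finitely many vectors involved could be collected directly.
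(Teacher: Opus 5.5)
Your proof is correct and is essentially the paper's argument: for necessity you reduce via \eqref{fxzrsareysa} to a constant plus a linear combination of powers $\langle\omega,v_j\rangle^{n_j}$ and expand the $v_j$ in a basis of their span, and for sufficiency you use the identity $\prod_i\langle\omega,e_i\rangle^{k_i}=\langle\omega^{\otimes k},e_1^{\odot k_1}\odot\dots\odot e_N^{\odot k_N}\rangle$. Keeping the scalar coefficients in the linear combination and treating separately the case where the span is $\{0\}$ are small points of care that the paper's write-up skips. For $V=\{0\}$, however, the statement with $N\ge 1$ genuinely fails, so that case must be excluded rather than handled separately.
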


\begin{proof} Let $p\in\mathcal P(V^*)$. Then, by \eqref{fxzrsareysa}, there exist $f_0\in\mathbb F$, $v_1,\dots,v_k\in V$ and $n_1,\dots,n_k\in\mathbb N$ such that 
\begin{equation}\label{cfcgdy}
p(\omega)=f_0+\sum_{i=1}^k\langle\omega^{\otimes n_i},v_i^{\otimes n_i}\rangle
=f_0+\sum_{i=1}^k\langle \omega,v_i\rangle^{n_i}. \end{equation}
Choose linearly independent vectors $e_1,\dots,e_N$ whose linear span coincides with the linear  span of the vectors $v_1,\dots,v_k$; in particular, $v_i=\sum_{j=1}^N c_{ij}e_j$ for each $i=1,\dots,k$. Then, by \eqref{cfcgdy}, $p(\omega)=\tilde p(\langle\omega,e_1\rangle,\dots,\langle\omega,e_N\rangle)$, where $\tilde p:\mathbb F^N\to\mathbb F$ is given by
$$\tilde p(x_1,\dots,x_N):=f_0+ \sum_{i=1}^k\bigg(\sum_{j=1}^N c_{ij}x_j\bigg)^{n_i}.$$
Thus, $\tilde p$ is a multivariate polynomial. 

To prove the converse statement, we only need to note that, for any $\omega\in V^*$, $e_1,\dots,e_N\in V$ and $i_1,\dots,i_N\in\mathbb N_0$,
\[\langle\omega,e_1\rangle^{i_1}\dotsm \langle\omega,e_N\rangle^{i_N}=\langle\omega^{\otimes(i_1+\dots+i_N)},e_1^{\odot i_1}\odot\dotsm\odot e_N^{\odot i_N}\rangle. \qedhere\]
\end{proof}

\begin{remark}
If the vector space $V$ is finite-dimensional, then, in Lemma~\ref{cgtxststs5u}, one may additionally assume that the vectors $e_1,\dots,e_N$ form a basis in $V$. Thus, the definition of a polynomial over $V$ generalizes the standard notion of a multivariate polynomial. 
\end{remark}

We denote by $\mathfrak F(V)$ the direct sum of the vector spaces $V^{\odot n}$ with $n\in\mathbb N_0$. Thus, $\mathfrak F(V)$ consists of all sequences $f=(f_n)_{n=0}^\infty$ such that 
$f_n\in V^{\odot n}$ ($n\in\mathbb N_0$) and for some $N\in\mathbb N_0$ (depending on $f$), we have $f_n=0$ for all $n\ge N$. 

\begin{lemma}\label{equation7867900}
We define a linear operator $I\in\mathcal L\big(\mathcal P(V^*),\mathfrak F(V)\big)$ which maps every polynomial $p\in\mathcal P(V^*)$ as in \eqref{cfgxtsu} to $(f_0,f_1,\dots,f_n,0,0,\dots)\in\mathfrak F(V)$. Then the map $I$ is bijective. 
\end{lemma}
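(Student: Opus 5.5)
The first thing to settle is that $I$ is well defined, i.e.\ that the coefficients $f_k$ in \eqref{cfgxtsu} are uniquely determined by the function $p$. Once that is known, linearity of $I$ is immediate, since adding two representations or scaling one gives a representation of the sum or scalar multiple. Surjectivity is trivial: for $(f_0,\dots,f_n,0,\dots)\in\mathfrak F(V)$ the function $\sum_{k=0}^n\langle\omega^{\otimes k},f_k\rangle$ is a polynomial mapped to it. So everything reduces to one claim: if $\sum_{k=0}^n\langle\omega^{\otimes k},f_k\rangle=0$ for all $\omega\in V^*$, then $f_k=0$ for every $k$. This claim gives both well-definedness and injectivity.

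To prove the claim, I would first separate the homogeneous components by scaling. Replace $\omega$ by $t\omega$ with $t\in\mathbb F$. The function $t\mapsto\sum_k t^k\langle\omega^{\otimes k},f_k\rangle$ is an ordinary polynomial in $t$ that vanishes identically on $\mathbb F$, so each coefficient is zero. Hence $\langle\omega^{\otimes k},f_k\rangle=0$ for all $\omega\in V^*$ and each $k$.

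It remains to show that if $f\in V^{\odot k}$ satisfies $\langle\omega^{\otimes k},f\rangle=0$ for all $\omega\in V^*$, then $f=0$. This is the main step.

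\begin{enumerate}
\item Write $f$ as a finite sum of elementary tensors and let $U\subset V$ be the finite-dimensional span of all vectors involved. Choose a basis $e_1,\dots,e_N$ of $U$. Then $f\in U^{\odot k}$, and $f=\sum c_{i_1\dots i_k}\, e_{i_1}\otimes\dots\otimes e_{i_k}$ with coefficients $c$ symmetric in the indices.
\item Every linear functional on $U$ extends to some $\omega\in V^*$: extend $e_1,\dots,e_N$ to a Hamel basis of $V$ and prescribe values on it. So for any $x\in\mathbb F^N$ there is $\omega\in V^*$ with $\langle\omega,e_j\rangle=x_j$.
\item For such $\omega$ we get $0=\langle\omega^{\otimes k},f\rangle=\sum c_{i_1\dots i_k}x_{i_1}\cdots x_{i_k}$. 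This is a homogeneous polynomial in $x$ that vanishes on all of $\mathbb F^N$, so all its coefficients are zero.
\item The coefficient of the monomial $x^\alpha$ equals the multinomial coefficient times the common value of $c$ on indices of type $\alpha$, because $c$ is symmetric. So all the $c$ vanish, and $f=0$.
\end{enumerate}

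The delicate point is the extension in step 2. It requires a Hamel basis, hence Zorn's lemma, in the infinite-dimensional case. One also needs the fact that a tensor in $U^{\otimes k}$ is zero if and only if all its coordinates in the product basis vanish, i.e.\ that $e_{i_1}\otimes\dots\otimes e_{i_k}$ form a basis of $U^{\otimes k}$ and that $U^{\otimes k}\subset V^{\otimes k}$ embeds injectively. Both are standard facts about algebraic tensor products, which I would cite from the references already used for tensor products.
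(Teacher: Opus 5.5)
Your proof is correct and follows essentially the same route as the paper: rescale $\omega\mapsto t\omega$ to separate the homogeneous components, pass to the finite-dimensional span of the vectors occurring in $f_n$, extend functionals from that span to all of $V$ (the paper uses an algebraic complement, you use a Hamel basis; both rely on choice), and finish with finite-dimensional nondegeneracy. The differences are minor: the paper invokes $(W^{\odot n})^*=\operatorname{l.s.}\{\omega^{\otimes n}\mid\omega\in W^*\}$ where you argue in coordinates with symmetric coefficients, and you state explicitly that the key claim gives the well-definedness of $I$, which is what the paper's ``injectivity'' argument actually establishes.
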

 
 \begin{proof} The map $I$ is obviously surjective. To prove that $I$ is injective, it is sufficient to prove that the kernel of $I$ contains only the zero polynomial. To this end, it is sufficient to prove the following claim.
 
 {\bf Claim}.
{\it  Let $p(\cdot)=\sum_{k=0}^n\langle \cdot^{\otimes k}, f_k\rangle \in\mathcal P    (V^*)$ and let $f_n\neq 0,$ i.e., $p$ is a polynomial of degree $n.$ Then there exists $\omega \in V^*$ such that $p(\omega)\neq 0$.}

Indeed, for $\omega\in V^*$ and $t\in\mathbb{F}$, $p(t\omega)=\sum_{k=0}^nt^{k}\langle \omega^{\otimes k}, f_k\rangle$. Hence,
$\langle \omega^{\otimes n}, f_n\rangle=(n!)^{-1}\frac{d^n}{dt^n}\big\rvert_{t = 0}\,p(t\omega)$.
 Therefore, it is sufficient to prove that $\langle \omega^{\otimes n}, f_n\rangle\ne0$ for some $\omega\in V^*$.

 By formula \eqref{cxtuewu533}, $f_n=\sum_{k=1}^Kc_kv_k^{\otimes n}$, where $c_k\in \mathbb{F}$, $v_k\in V$, $k=1,\dots,K$, $K\in \mathbb N$.
Define $W:=\operatorname{l.s.}\{v_1,\dots,v_K\}$.  
Let $W^\dag$ be an algebraic complement of $W$ in $V,$ i.e., $W^\dag$ is a subspace of $V$ such that $W\cap W^\dag=\{0\}$ and $W+W^\dag=V$. Then, every element $v\in V$ admits a unique representation 
$v=w+w^\dag$, where $w\in W$ and $w^\dag\in W^\dag$. 
Each $\omega\in W^*$ may be extended to an element $\tilde \omega$ of $V^*$ by setting, for each $v=w+w^\dag\in V$,
$\langle \tilde \omega, v\rangle:=\langle \omega, w\rangle$. 

Since $f_n\in W^{\odot n}\subset V^{\odot n}$, it is therefore sufficient to prove the existence of $\omega\in W^*$ such that $\langle \omega^{\otimes n},f_n\rangle\ne0$. Since the vector space $W$ is finite-dimensional, the space $W^*$ is also  finite-dimensional (in fact, $\operatorname{dim}(W)=\operatorname{dim}(W)^*$), hence 
\begin{equation}\label{gtfdtrsrea4ra}
(W^{\odot n})^*=(W^*)^{\odot n}=\operatorname{l.s.}\{\omega^{\otimes n}\mid\omega\in W^* \}.
\end{equation}
 If $\langle\omega^{\otimes n},f_n\rangle=0$ for all $\omega\in W^*$, then, by \eqref{gtfdtrsrea4ra}, $\langle\theta_n,f_n\rangle=0$ for all $\theta_n\in (W^{\odot n})^*$, hence $f_n=0$. This is a contradiction. Therefore, there exists $\omega\in W^*$ such that $\langle\omega^{\otimes n},f_n\rangle\ne0$. 
\end{proof}

 For each $\theta \in V^*$, we define the  {\it shift operator} $E(\theta)\in \mathcal L( \mathcal P  (V^*))$ by $(E(\theta)p)(\omega):= p(\omega+\theta)$ for  $p\in \mathcal P  (V^*)$ and $\omega\in V^*$.  It is easy to see that, for each $n\in \mathbb{N}$ and $f_n \in V^{\odot  n}$,
\begin{equation}\label{equation786679}\big(E(\theta)\langle \cdot^{\otimes n},f_n\rangle\big)(\omega)=\sum_{k=0}^n {{n}\choose{k}}\langle \omega^{\otimes k}\odot \theta^{\otimes (n-k)},f_n\rangle.\end{equation}

For each $\theta \in V^*$, we define the {\it derivative in direction $\theta$}  as the operator $D(\theta)\in \mathcal L (\mathcal P  (V^*))$ given  by $(D(\theta)p)(\omega):=\frac{d}{dt}\bigr\rvert_{t = 0}\,p(\omega+t\theta)$. 
A straightforward calculation shows that, for any $f_n\in V^{\odot n}$,
\begin{equation}
\big(D(\theta)\langle \cdot^{\otimes n},f_n\rangle\big)(\omega)=n\langle\omega^{\otimes(n-1)}\odot\theta, f_n\rangle. \label{equation72167}
\end{equation}

Let $W$ be a vector space over $\mathbb F$. We denote by $\mathcal P(V^*,W)$ the {\it vector space of polynomials on $V^*$ with values in $W$.} This space is defined  as the linear span of functions of the form $V^*\ni\omega\mapsto p(\omega) w$, where $p\in\mathcal P(V^*)$
 and $w\in W$. Note that there exists a natural isomorphism between $\mathcal P(V^*,W)$ and the tensor product $\mathcal P(V^*)\otimes W$.

Let $k,m\in\mathbb N$ and $G_k\in (V^{\odot k})^*$. For  $v\in V$,  we define 
$$\langle G_k,v^{\otimes(k+m)}\rangle:=\langle G_k,v^{\otimes k}\rangle v^{\otimes m}\in V^{\odot m}.$$ 
Extending this definition by linearity, we define, for $f_{k+m}\in V^{\odot(k+m)}$, 
 $\langle G_k,f_{k+m}\rangle$ as an element of $V^{\odot m}$. Hence, the map 
 $$V^*\ni\omega\mapsto \langle\omega^{\otimes k},f_{k+m}\rangle\in V^{\odot m}$$
  belongs to $\mathcal P(V^*,V^{\odot m})$.

For $p\in \mathcal P  (V^*)$, we define the {\it differential of $p$} as the polynomial $Dp\in\mathcal P(V^*,V)$ that satisfies $\langle \theta,  (D p)(\omega)\rangle =(D(\theta)p)(\omega)$ for each $\theta \in V^*$. Thus, $D\in\mathcal L\big(\mathcal P(V^*), \mathcal P(V^*,V)\big)$.
It follows from \eqref{equation72167} that, for each $f_n\in V^{\odot n}$,
$$\big(D\langle\cdot^{\otimes n},f_n\rangle\big)(\omega)=n\langle\omega^{\otimes (n-1)},f_n\rangle,$$
 which can be formally written as $D\omega^{\otimes n}=n\omega^{\otimes (n-1)}$. 

More generally, for $k\in\mathbb N$, we define a linear operator $D^{k}\in\mathcal L\big(\mathcal P(V^*), \mathcal P(V^*,V^{\odot k})\big)$ that satisfies, for all $p\in\mathcal P(V^*)$ and $\theta_1,\dots,\theta_k \in V^*$, 
\begin{equation}\label{vcxsrearwa}
\langle \theta_1\odot\cdots\odot\theta_k,  (D^{k} p)(\omega)\rangle =(D(\theta_1)\dotsm D(\theta_k)p)(\omega).
\end{equation}
We similarly have, for each $f_n\in V^{\odot n}$, 
\begin{equation}\label{vdstsw6ue6i}
\big(D^{k}\langle\cdot^{\otimes n},f_n\rangle\big)(\omega)=(n)_k\langle\omega^{\otimes (n-k)},f_n\rangle.
\end{equation}
 Here $(n)_k:=n(n-1)\dotsm(n-k+1)$ is the falling factorial of degree $k$ evaluated at~$n$ (which is equal to zero for $k\ge n+1$.) Again, we can formally write $D^{k}\omega^{\otimes n}=(n)_k\,\omega^{\otimes(n-k)}$.

\begin{lemma}[Boole's formula] \label{fghj9876b}
We have, for each $\theta \in V^*$,
\begin{equation*} E(\theta)=\sum_{k=0}^{\infty}\frac{1}{k!}{D(\theta)}^k. \end{equation*}
\end{lemma}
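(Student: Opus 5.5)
Since $D(\theta)$ lowers degree by one, the sum $\sum_{k\ge0}\frac1{k!}D(\theta)^k$ applied to any polynomial has only finitely many nonzero terms, so the right-hand side is a well-defined operator in $\mathcal L(\mathcal P(V^*))$. Both sides are linear, so by \eqref{fxzrsareysa} it suffices to check the identity on constants and on monomials $\langle\cdot^{\otimes n},f_n\rangle$ with $f_n\in V^{\odot n}$. (One could even restrict to $f_n=v^{\otimes n}$.) On constants both sides act as the identity, since $D(\theta)$ kills constants.

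For a monomial, I will iterate \eqref{equation72167}. I claim
\[
\big(D(\theta)^k\langle\cdot^{\otimes n},f_n\rangle\big)(\omega)=(n)_k\,\langle\omega^{\otimes(n-k)}\odot\theta^{\otimes k},f_n\rangle ,
\]
which I prove by induction on $k$. The induction step requires differentiating $\omega\mapsto\langle\omega^{\otimes(n-k)}\odot\theta^{\otimes k},f_n\rangle$ in direction $\theta$. The cleanest route is to pass to $f_n=v^{\otimes n}$. Then the expression equals $\langle\omega,v\rangle^{n-k}\langle\theta,v\rangle^k$, and its derivative $\frac{d}{dt}\big|_{t=0}$ along $\omega+t\theta$ is $(n-k)\langle\omega,v\rangle^{n-k-1}\langle\theta,v\rangle^{k+1}$. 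This gives the claim for such $f_n$, and linearity together with \eqref{cxtuewu533} extends it to all $f_n$.

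Summing, $\frac1{k!}(n)_k=\binom nk$, so
\[
\sum_{k=0}^n\binom nk\langle\omega^{\otimes(n-k)}\odot\theta^{\otimes k},f_n\rangle ,
\]
which coincides with \eqref{equation786679} after reindexing $k\mapsto n-k$. This proves the identity.

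Alternatively, for $f_n=v^{\otimes n}$ the whole statement reduces to the one-variable Taylor formula for the polynomial $t\mapsto(\langle\omega,v\rangle+t\langle\theta,v\rangle)^n$ at $t=1$. The only delicate point is the bookkeeping of symmetric tensor products in the inductive derivative, and working with $v^{\otimes n}$ sidesteps it.
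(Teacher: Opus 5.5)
Your proof is correct and takes essentially the paper's route. Like the paper, you reduce to monomials $\langle\cdot^{\otimes n},f_n\rangle$, compute $D(\theta)^k$ on them as $(n)_k\langle\omega^{\otimes(n-k)}\odot\theta^{\otimes k},f_n\rangle$, and match the result term by term with \eqref{equation786679}. The only difference is that the paper reads off the iterated-derivative formula from \eqref{vcxsrearwa} and \eqref{vdstsw6ue6i}, whereas you derive it directly by induction for $f_n=v^{\otimes n}$ and then extend by linearity.
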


\begin{proof}It is sufficient to prove that, for each $f_n\in V^{\odot n}$ ($n\in \mathbb{N}$), we have
$$ E(\theta)\langle \cdot^{\otimes n},f_n\rangle=\sum_{k=0}^{n}\frac{1}{k!}\,D(\theta)^k \langle \cdot^{\otimes n},f_n\rangle.$$
  But 
this immediately follows from \eqref{equation786679}, \eqref{vcxsrearwa}  and \eqref{vdstsw6ue6i}.
\end{proof}

We say that an operator $T\in \mathcal L (\mathcal P  (V^*))$ is {\it shift-invariant} if 
$TE(\theta)=E(\theta)T$ for all $\theta \in V^* $. We denote by $\mathcal {SI}(\mathcal P  (V^*))$ the set of all shift-invariant operators. Obviously,  $\mathcal{SI}(\mathcal P  (V^*))$ is an algebra under product and sum of operators from $\mathcal{SI}(\mathcal P  (V^*))$.

\begin{lemma}\label{rssw654w6u4weu6}
If $T\in \mathcal{SI}(\mathcal P  (V^*))$, then $T 1$ is a constant.
\end{lemma}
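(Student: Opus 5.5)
The plan is to use only the fact that the constant polynomial $1$ is fixed by every shift: $E(\theta)1=1$ for all $\theta\in V^*$, since $(E(\theta)1)(\omega)=1$. Set $q:=T1\in\mathcal P(V^*)$. Shift-invariance gives
$$E(\theta)q=E(\theta)T1=TE(\theta)1=T1=q,\qquad \theta\in V^*.$$
Evaluating at $\omega=0$ yields $q(\theta)=(E(\theta)q)(0)=q(0)$ for every $\theta\in V^*$. Hence $q$, viewed as a function on $V^*$, is the constant function with value $q(0)$.

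It remains to check that this function equality means $q$ is the constant polynomial, i.e. that $I(q)=(q(0),0,0,\dots)$ in the notation of Lemma~\ref{equation7867900}. I would write $q(\omega)=\sum_{k=0}^n\langle\omega^{\otimes k},f_k\rangle$. The polynomial $q-q(0)$ vanishes identically on $V^*$. By the injectivity of $I$ in Lemma~\ref{equation7867900}, equivalently the Claim in its proof, $I(q-q(0))=0$. This gives $f_0=q(0)$ and $f_k=0$ for $k\ge1$, so $T1=q(0)$ is a constant.

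No step here is a real obstacle. The only point needing care is this last identification, namely that a polynomial vanishing pointwise on $V^*$ has all coefficients $f_k$ equal to zero. Lemma~\ref{equation7867900} already supplies exactly that, so the proof amounts to the short calculation above.
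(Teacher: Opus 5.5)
Your proof is correct and is essentially the paper's argument: both use $E(\theta)1=1$ and shift-invariance to get $(T1)(\omega+\theta)=(T1)(\omega)$, then set $\omega=0$. Your final appeal to Lemma~\ref{equation7867900} is harmless but not strictly needed. Elements of $\mathcal P(V^*)$ are defined as functions on $V^*$, so a constant function already is the constant polynomial.
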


\begin{proof}For any $\omega, \theta \in V^*$, $$(T 1)(\omega)=(T E(\theta) 1)(\omega)=(E(\theta)T 1)(\omega)=(T 1)(\omega+\theta).$$
Setting $\omega=0$, we get $(T 1)(\theta)=(T 1)(0)$ for all $\theta \in V^*$. 
\end{proof}

\subsection{Polynomial sequences}

 We denote by $\mathbb P(V)$ the set of all linear operators $P\in\mathcal L(\mathcal P  (V^*))$ of the form  
\begin{equation}\label{xreaw4a4y}
\big(P\langle \cdot^{\otimes n},f_n\rangle\big)(\omega)= \sum_{k=0}^n\langle \omega^{\otimes k},P_{kn}f_n\rangle,\quad f_n\in V^{\odot n},\ n\in\mathbb N_0,
\end{equation} 
where $P_{kn}\in \mathcal L(V^{\odot   n}, V^{\odot   k})$ and each operator $P_{nn} \in \mathcal L(V^{\odot   n})$ is bijective, hence $P_{nn}^{-1}\in \mathcal L(V^{\odot   n})$.  We will identify each operator $P\in \mathbb P(V)$ with the infinite upper triangular block-matrix $[P_{kn}]_{k,n\in \mathbb {N}_0}$ in which $P_{kn}\in \mathcal L(V^{\odot n},V^{\odot k})$, for each $k\le n$ the operator $P_{kn}$ is as in formula~\eqref{xreaw4a4y}, 
  and $P_{kn}=0$ for $k>n$.  We denote by $\mathbb M(V)$ the subset of $\mathbb P(V)$ that consists of $P=[P_{kn}]_{k,n\in \mathbb N_0}\in \mathbb P(V)$ such that $P_{nn}= \mathbf 1_n$ for all $n\in\mathbb N_0$.

By formula \eqref{xreaw4a4y}, 
\begin{equation}\label{vgdyrdy6}
\big(P\langle \cdot^{\otimes n},f_n\rangle\big)(\omega) =
 \sum_{k=0}^n\langle P_{kn}^* \omega^{\otimes k},f_n\rangle =\langle P_n(\omega),f_n\rangle, \end{equation}
 where 
 \begin{equation}\label{gftdtrst5}P_n(\omega):=\sum_{k=0}^n P_{kn}^*\omega^{\otimes k}\in (V^{\odot n})^*,\quad n\in\mathbb N_0.
 \end{equation} 
 Here $P_{kn}^*\in\mathcal L((V^{\odot k})^*,(V^{\odot n})^*)$ is the dual (adjoint) operator of $P_{kn}$. 
  We observe that any sequence of the maps $V^*\ni\omega
\mapsto P_n(\omega)\in(V^{\odot n})^*$ ($n\in\mathbb N_0$) that are defined by formula \eqref{gftdtrst5}, with  $P_{kn}\in \mathcal L(V^{\odot n},V^{\odot k})$ and $P_{nn}$ being bijective, uniquely identifies an operator $P\in\mathbb P(V)$ through formula \eqref{vgdyrdy6}. We call $(P_n(\omega))_{n=0}^\infty$ the {\it polynomial sequence corresponding to the operator $P$}. Below we will effectively identify the operator  $P\in\mathbb P(V)$  with its polynomial sequence  $(P_n(\omega))_{n=0}^\infty$. If $P\in\mathbb M(V)$, then $P_{nn}^*=\mathbf 1$ for all $n\in\mathbb N_0$, and we call the corresponding polynomial sequence $(P_n(\omega))_{n=0}^\infty$ {\it monic}. 
If $P$ is the identity operator $\mathbf 1$, we obtain the {\it sequence of monomials} $(P_n(\omega)=\omega^{\otimes n})_{n=0}^\infty$.

 The {\it (exponential) generating function of a polynomial sequence
$(P^{(n)}(\omega))_{n=0}^\infty$} is defined by
$$G(\omega,v):= \sum_{n=0}^\infty \frac{1}{n!}\,\langle P_n(\omega),v^{\otimes n}\rangle. $$
In this formula, for each fixed $\omega\in V^*$, $G(\omega,\cdot)$ is a formal tensor power series from $\mathcal F_0(V)$.

 By using the definition of an operator from $\mathbb P(V)$ and formula \eqref{fxzrsareysa}, one can easily prove the following lemma.  
 
 \begin{lemma}\label{vcfcfydy}
   {\rm (i)} Let $P\in\mathbb P(V)$ and let $(P_n(\omega))_{n=0}^\infty$ be the corresponding polynomial sequence. Then $P$ is a bijection, and   $P^{-1}\in\mathbb P(V)$. Furthermore,
    $$\mathcal P  (V^*)=\operatorname{l.s.}\big\{f_0,\ \langle P_n(\cdot),v^{\otimes n}\rangle \mid f_0\in\mathbb F,\ v\in V,\ n\in \mathbb{N}\big\}.
$$

{\rm (ii)} The $\mathbb P(V)$ is a group for the product of linear operators. The $\mathbb M(V)$ is a subgroup of $\mathbb P(V)$.
 \end{lemma}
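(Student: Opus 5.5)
We work throughout in the graded picture. By Lemma~\ref{equation7867900}, the space $\mathcal P(V^*)$ is identified with $\mathfrak F(V)=\bigoplus_{n\ge0}V^{\odot n}$ via $I$. Under this identification, an operator $P\in\mathbb P(V)$ becomes the upper triangular block matrix $[P_{kn}]$: a monomial $\langle\cdot^{\otimes n},f_n\rangle$, i.e.\ $f_n$ in degree $n$, is sent to $(P_{0n}f_n,\dots,P_{nn}f_n,0,\dots)$. The block-matrix language lets us compose and invert such operators exactly as with upper triangular matrices.

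\textbf{Step 1: products.} For part (ii) we first check closure. Take $P,Q\in\mathbb P(V)$ and compute $PQ$ on $\langle\cdot^{\otimes n},f_n\rangle$. This gives $\sum_{k\le n}\langle\omega^{\otimes k},(PQ)_{kn}f_n\rangle$ with
\[
(PQ)_{kn}=\sum_{j=k}^n P_{kj}Q_{jn}\in\mathcal L(V^{\odot n},V^{\odot k}).
\]
The sum only runs over $k\le j\le n$ because both matrices are upper triangular. The diagonal entry is $(PQ)_{nn}=P_{nn}Q_{nn}$, which is bijective, so $PQ\in\mathbb P(V)$. If moreover $P,Q\in\mathbb M(V)$, then the diagonal is $\mathbf 1_n$, so $PQ\in\mathbb M(V)$. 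Associativity is automatic since these are operators, and the identity $\mathbf 1$ lies in $\mathbb M(V)$.

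\textbf{Step 2: inverses.} This is the main point. We construct a block matrix $R=[R_{kn}]$ with $R_{kn}=0$ for $k>n$ and $R_{nn}=P_{nn}^{-1}$. For $k<n$ we define $R_{kn}$ by downward recursion on $k$, i.e.\ by induction on $n-k$:
\[
R_{kn}:=-P_{kk}^{-1}\sum_{j=k+1}^{n}P_{kj}R_{jn}.
\]
This recursion is exactly equivalent to $\sum_{j=k}^n P_{kj}R_{jn}=\delta_{kn}\mathbf 1_n$, so $PR=\mathbf 1$ by the product formula of Step~1. Symmetrically, we build a left inverse $L$ by recursion in the column index:
\[
L_{kn}:=-\Big(\sum_{j=k}^{n-1}L_{kj}P_{jn}\Big)P_{nn}^{-1},
\]
which gives $LP=\mathbf 1$. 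Then $L=L(PR)=(LP)R=R$. Hence $P$ is a bijection of $\mathcal P(V^*)$ and $P^{-1}=R\in\mathbb P(V)$. When $P\in\mathbb M(V)$, the diagonal of $R$ is $\mathbf 1_n$, so $P^{-1}\in\mathbb M(V)$. This simultaneously proves the first assertion of (i) and completes (ii). The only care needed is that each block is well defined, which holds because every $P_{kk}$ is invertible; no finite-dimensionality is used.

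\textbf{Step 3: the spanning statement in (i).} For any $v\in V$ we have
\[
\langle P_n(\cdot),v^{\otimes n}\rangle=P\langle\cdot^{\otimes n},v^{\otimes n}\rangle
\]
by \eqref{vgdyrdy6}, and similarly $f_0=P(P_{00}^{-1}f_0)$ for constants. By \eqref{fxzrsareysa}, the constants together with the monomials $\langle\cdot^{\otimes n},v^{\otimes n}\rangle$ span $\mathcal P(V^*)$. Since $P$ is a linear bijection, their images span $P\mathcal P(V^*)=\mathcal P(V^*)$, which is the claimed identity.

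I expect the main obstacle to be purely bookkeeping: verifying the block product formula in Step~1 directly from \eqref{xreaw4a4y}, which is routine by linearity, and making sure the two-sided inverse argument is stated without assuming any finiteness.
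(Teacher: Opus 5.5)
Your proposal is correct and takes the route the paper indicates. The paper gives no detailed proof; it only remarks that the lemma follows from the upper-triangular block structure of operators in $\mathbb P(V)$ together with formula \eqref{fxzrsareysa}. Your block-product computation, the recursive construction of the inverse blocks with $R_{nn}=P_{nn}^{-1}$, and the transfer of the spanning set through the bijection $P$ fill in exactly those details. The separate left-inverse construction is harmless but unnecessary: $R\in\mathbb P(V)$ has its own right inverse, and $P$ is already injective because it maps a nonzero top-degree coefficient $f_n$ to $P_{nn}f_n\neq0$.
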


 Let $P\in \mathbb P(V)$. For each $\theta\in V^*$, we define the {\it lowering operator $Q(\theta)\in \mathcal L(\mathcal P  (V^*))$ corresponding to $P$} by $Q(\theta ):=PD(\theta)P^{-1}$. It follows from \eqref{equation72167} that, for $n\in\mathbb N_0$ and $v\in V$,
  \begin{align}
  \big(Q(\theta)\langle P_n(\cdot),v^{\otimes n}\rangle\big)(\omega)&=n\langle \theta,v\rangle \langle P_{n-1}( \omega),v^{\otimes (n-1)}\rangle\notag\\
  & =n\langle P_{n-1}(\omega)\odot\theta,v^{\otimes n}\rangle,\quad v\in V.\notag\end{align} 

Let $k\in\mathbb N$. Similarly to \eqref{vcxsrearwa},  we define  a linear operator $Q^{k}\in\mathcal L\big(\mathcal P(V^*), \mathcal P(V^*,V^{\odot k})\big)$ that satisfies, for all $p\in\mathcal P(V^*)$ and $\theta_1,\dots,\theta_k \in V^*$,
\begin{equation}\label{cxrse5w354w5}
\big\langle \theta_1\odot\cdots\odot\theta_k,  (Q^{k} p)(\omega)\big\rangle =(Q(\theta_1)\dotsm Q(\theta_k)p)(\omega).
\end{equation} 
We have, by \eqref{vdstsw6ue6i},
\begin{equation}\label{ge4ew34wq43w}
\big(Q^{k}\langle P_n(\cdot),f_n\rangle\big)(\omega)=(n)_k\langle P_{n-k}(\omega),f_n\rangle,
\end{equation}
which can be formally written  as $Q^{k}P_n(\omega)=(n)_kP_{n-k}(\omega)$. In the case $k=1$, we will also write $Q:=Q^{1}$ and call $Q$ the {\it lowering differential}.

\begin{proposition}(Polynomial expansion)\label{dyrde6e6e645} 
Let $P\in\mathbb P(V)$ and let $(P_n(\omega))_{n=0}^\infty$ be the corresponding polynomial sequence. Assume that  $P_{0}=1$ and $P_n(0)=0$ for all $n\in\mathbb N$. Let $Q$ be the corresponding lowering differential. Then, for each $p\in \mathcal P  (V^*)$, 
\begin{equation}\label{vcyrte65}
p(\omega)=p(0)+\sum_{k=1}^\infty \frac{1}{k!}\,\big\langle P_k(\omega),(Q^{k}p)(0)\big\rangle.
\end{equation}
\end{proposition}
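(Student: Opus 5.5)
Both sides of \eqref{vcyrte65} are linear in $p$, so by Lemma~\ref{vcfcfydy}~(i) it suffices to verify the identity on the spanning set of $\mathcal P(V^*)$ described there. We therefore check it for $p=f_0\in\mathbb F$ (constants) and for $p(\omega)=\langle P_n(\omega),v^{\otimes n}\rangle$ with $v\in V$, $n\in\mathbb N$. A slightly more convenient variant is to take $p(\omega)=\langle P_n(\omega),f_n\rangle$ with arbitrary $f_n\in V^{\odot n}$; by \eqref{cxtuewu533} this gives nothing new, but formula \eqref{ge4ew34wq43w} is stated for general $f_n$.

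\emph{Constants.} For $p=f_0$, formula \eqref{ge4ew34wq43w} with $n=0$ gives $Q^{k}1=(0)_kP_{-k}=0$ for $k\ge1$. Hence the sum in \eqref{vcyrte65} vanishes and the right-hand side equals $p(0)=f_0$.

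\emph{The polynomials $p(\omega)=\langle P_n(\omega),f_n\rangle$, $n\ge1$.} By the assumption $P_n(0)=0$ we have $p(0)=0$. By \eqref{ge4ew34wq43w}, $(Q^{k}p)(\omega)=(n)_k\langle P_{n-k}(\omega),f_n\rangle\in V^{\odot k}$, and this vanishes for $k>n$. Evaluating at $\omega=0$:
\begin{itemize}
\item for $1\le k<n$ we get $P_{n-k}(0)=0$, so $(Q^{k}p)(0)=0$;
\item for $k=n$ we get $P_0=1$, so $(Q^{n}p)(0)=n!\,\langle 1,f_n\rangle=n!\,f_n\in V^{\odot n}$.
\end{itemize}
Here the pairing of $1=P_0\in (V^{\odot 0})^*$ with $f_n\in V^{\odot n}$ yields $f_n$ itself, following the convention $\langle G_k,f_{k+m}\rangle\in V^{\odot m}$ with $k=0$. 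Thus the sum in \eqref{vcyrte65} reduces to the single term $\frac1{n!}\langle P_n(\omega),n!f_n\rangle=p(\omega)$, which proves the identity.

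\emph{Main obstacle.} The only delicate point is making sense of formula \eqref{ge4ew34wq43w} as a $V^{\odot k}$-valued polynomial, and checking that evaluating at $0$ commutes with the pairing. I would handle this by testing against $\theta_1\odot\cdots\odot\theta_k$ via \eqref{cxrse5w354w5}. With $f_n=v^{\otimes n}$, repeated use of the formula for $Q(\theta)$ gives
\[
\big(Q(\theta_1)\cdots Q(\theta_k)p\big)(\omega)=(n)_k\,\langle\theta_1,v\rangle\cdots\langle\theta_k,v\rangle\,\langle P_{n-k}(\omega),v^{\otimes(n-k)}\rangle,
\]
which is $\big\langle\theta_1\odot\cdots\odot\theta_k,\,(n)_k\langle P_{n-k}(\omega),v^{\otimes(n-k)}\rangle v^{\otimes k}\big\rangle$. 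Evaluating at $0$ then makes the computation above rigorous.
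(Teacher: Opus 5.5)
Your proof is correct and follows essentially the same route as the paper. Both arguments reduce by linearity (via Lemma~\ref{vcfcfydy}~(i)) to constants and to $p=\langle P_n(\cdot),f_n\rangle$, then use \eqref{ge4ew34wq43w} with $P_{n-k}(0)=0$ for $1\le k<n$ and $P_0=1$, so that only the $k=n$ term survives; your extra verification of \eqref{ge4ew34wq43w} by testing against $\theta_1\odot\cdots\odot\theta_k$ is a harmless addition that the paper takes for granted.
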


\begin{proof} Formula \eqref{vcyrte65} trivially holds when $p(\omega)$ is constant. 
Since both the left- and right-hand sides of formula \eqref{vcyrte65} depend linearly on $p\in \mathcal P  (V^*)$,   it is sufficient to prove formula \eqref{vcyrte65} in the case $p(\omega)=\langle P_n(\omega), f_n \rangle$, where  $n \in \mathbb{N}$ and $f_n\in V^{\odot n}$. 
Since $P_n(0)=0$ for all $n\in\mathbb N$,  
we have, by \eqref{ge4ew34wq43w},
\begin{align*}&\langle P_n(0),f_n\rangle+ \sum_{k=1}^\infty \frac{1}{k!}\,\big\langle P_k(\omega), (Q^{k}\langle P_n(\cdot),f_n\rangle)(0)\big\rangle\\
&\quad=\sum_{k=1}^n \frac{1}{k!}\,\big\langle P_k(\omega),
(n)_k\langle P_{n-k}(0),f_n\rangle\big\rangle\\
&\quad=\frac1{n!}\langle P_n(\omega), n!\, P_0f_n
\rangle=\langle P_n(\omega),f_n\rangle. \qedhere
\end{align*} 
\end{proof}

\begin{remark} In the special case $P=\mathbf 1\in\mathbb P(V)$, formula \eqref{vcyrte65} can be written  as follows:
$p(\omega)=p(0)+\sum_{k=1}^\infty\frac1{k!}(D(\omega)^kp)(0)$. This is, of course, a version of Taylor's formula. \end{remark}

For $k\in\mathbb N$ and $T_k\in (V^{\odot  k})^*$, we define a linear operator $\langle T_k,Q^{k}\rangle \in\mathcal L(\mathcal P(V^*))$ by 
\begin{equation}\label{crtst5e5}
\big(\langle T_k,Q^{k}\rangle  p\big)(\omega):=\big\langle T_k,(Q^{k}p)(\omega)\big\rangle,\quad p\in\mathcal P(V^*),\ \omega\in V^*.
\end{equation} 
By \eqref{ge4ew34wq43w} and \eqref{crtst5e5},
\begin{align}
\big(\langle T_k, Q^{k}\rangle \langle P_n(\cdot),f_n\rangle\big)(\omega)&=
(n)_k\big\langle T_k,\langle  P_{n-k}(\omega),f_n\rangle\big\rangle\label{treseaw4aq42q24}\\
&=(n)_k\langle P_{n-k}(\omega)\odot T_k,f_n\rangle.\label{csesea43}
\end{align}

\begin{theorem}[Operator expansion]\label{bgnfm758} Let $P\in\mathbb P(V)$ and let $(P_n(\omega))_{n=0}^\infty$ be the corresponding polynomial sequence. Assume that 
  $P_0=1$ and $P_n (0)=0$ for all $n\geq 1$.    Assume that the corresponding lowering operators $Q(\theta)$  ($\theta\in V^*$) are shift-invariant. Let $T \in \mathcal L(\mathcal P  (V^*))$. Then $T$ is shift-invariant, i.e., $T\in\mathcal{SI}(\mathcal P  (V^*))$, if and only if there exist $T_0\in \mathbb F$ and $T_k\in (V^{\odot  k})^*$ for $k\in \mathbb N$ such that
\begin{equation}\label{vgxds5ese5}T=T_0\mathbf 1+\sum_{k=1}^\infty \langle T_k,
Q^{k}\rangle .\end{equation}
In the latter case,  
\begin{equation}\label{gxes5seasew}T_0=T 1,\quad \langle 
T_k, f_k \rangle =\frac1{k!}\big(T\langle P_k(\cdot),f_k\rangle\big)(0)\quad\text{for $k\in\mathbb N$ and $f_k\in V^{\odot   k}$}.
\end{equation} 
\end{theorem}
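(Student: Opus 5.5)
The plan is to prove the two directions separately, obtaining the coefficient formula \eqref{gxes5seasew} along the way.

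\emph{Sufficiency.} Assume $T$ has the form \eqref{vgxds5ese5}. First note that the sum is finite on each polynomial: by \eqref{ge4ew34wq43w} and Lemma~\ref{vcfcfydy}(i), $\langle T_k,Q^{k}\rangle$ annihilates every polynomial of degree $<k$. Next we show that each $\langle T_k,Q^{k}\rangle$ is shift-invariant. Since every $Q(\theta)$ commutes with $E(\theta')$, so does each product $Q(\theta_1)\dotsm Q(\theta_k)$. By \eqref{cxrse5w354w5}, $E(\theta')$ commutes with the $V^{\odot k}$-valued operator $Q^{k}$; more precisely, $(Q^kE(\theta')p)(\omega)=(Q^kp)(\omega+\theta')$. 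This follows by pairing with $\theta_1\odot\dots\odot\theta_k$. Such elements do not span $(V^{\odot k})^*$ in infinite dimensions, but they do separate points of $V^{\odot k}$ (as in the proof of Lemma~\ref{equation7867900}, using a finite-dimensional subspace), so this suffices. Pairing with $T_k$ then gives $\langle T_k,Q^k\rangle E(\theta')=E(\theta')\langle T_k,Q^k\rangle$.

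\emph{Necessity.} Let $T$ be shift-invariant. Set $T_0:=T1$, which is a constant by Lemma~\ref{rssw654w6u4weu6}, and define $T_k$ by \eqref{gxes5seasew}. This is a well-defined linear functional on $V^{\odot k}$ because $f_k\mapsto (T\langle P_k(\cdot),f_k\rangle)(0)$ is linear. Let $S$ denote the right-hand side of \eqref{vgxds5ese5}. By sufficiency, $S$ is shift-invariant. We show $S=T$ on each $\langle P_n(\cdot),f_n\rangle$, which suffices by Lemma~\ref{vcfcfydy}(i).

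The key step is evaluating both sides at $\omega$. For $p=\langle P_n(\cdot),f_n\rangle$ we have $(Tp)(\omega)=(E(\omega)Tp)(0)=(TE(\omega)p)(0)$. To compute $E(\omega)p$, apply the polynomial expansion (Proposition~\ref{dyrde6e6e645}) to the polynomial $E(\omega)p$:
\[
E(\omega)p=(E(\omega)p)(0)+\sum_{k\ge1}\frac1{k!}\big\langle P_k(\cdot),(Q^kE(\omega)p)(0)\big\rangle .
\]
Using shift-invariance of $Q$, $(Q^kE(\omega)p)(0)=(Q^kp)(\omega)$, which equals $(n)_k\langle P_{n-k}(\omega),f_n\rangle\in V^{\odot k}$ by \eqref{ge4ew34wq43w}. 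Now apply $T$ and evaluate at $0$. The constant term gives $T_0\,p(\omega)$. The $k$-th term gives $\frac1{k!}\big(T\langle P_k(\cdot),g_k\rangle\big)(0)=\langle T_k,g_k\rangle$ with $g_k=(Q^kp)(\omega)$. Since $g_k$ depends on the parameter $\omega$ only through a fixed element of $V^{\odot k}$, linearity of $T$ permits this. The sum is therefore $T_0p(\omega)+\sum_k\langle T_k,(Q^kp)(\omega)\rangle=(Sp)(\omega)$ by \eqref{crtst5e5}.

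Finally, uniqueness of the coefficients in \eqref{gxes5seasew} follows by applying \eqref{vgxds5ese5} to $1$ and to $\langle P_k(\cdot),f_k\rangle$ at $0$. Using \eqref{csesea43} together with $P_j(0)=0$ for $j\ge1$ and $P_0=1$, only the $k$-th term survives, and it yields $k!\langle T_k,f_k\rangle$.

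The main obstacle is making the informal identity $Q^kE(\omega)=E(\omega)Q^k$ rigorous for the vector-valued operator $Q^k$, and the separation argument on $V^{\odot k}$ this requires. I would handle it through the finite-dimensional reduction already used in Lemma~\ref{equation7867900}.
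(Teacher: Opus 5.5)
Your proposal is correct and follows essentially the same route as the paper. Both prove shift-invariance of each $\langle T_k,Q^k\rangle$ by pairing with tensors of functionals and using the separation argument of Lemma~\ref{equation7867900}, and both prove necessity by applying the polynomial expansion to $E(\omega)p$, using $TE(\omega)=E(\omega)T$, and evaluating at $0$. Your explicit uniqueness check for the coefficients \eqref{gxes5seasew} fills in a step the paper leaves implicit, and your factor bookkeeping (giving $\sum_k\langle T_k,(Q^kp)(\omega)\rangle$) is the one consistent with the statement, whereas the paper's final display has a stray $1/k!$.
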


\begin{proof} For each polynomial $p\in\mathcal P(V^*)$ of degree $n$, $\langle T_k,
Q^{k}\rangle  p=0$ for all $k\ge n+1$. Therefore, the operator $T$ in formula \eqref{vgxds5ese5} is well-defined.

Since the lowering operators are shift-invariant, we have, for any $\theta,\eta,\omega \in V^*$ and $p\in\mathcal P(V^*)$,
\begin{align}
&\big\langle\eta^{\otimes k},(Q^{k}E(\theta)p)(\omega)\big\rangle=\big(Q(\eta)^kE(\theta)p\big)(\omega)=\big(E(\theta)Q(\eta)^k p\big)(\omega)\notag\\
&\quad =\big( Q(\eta)^k p\big)(\omega+\theta)=\langle\eta^{\otimes k},(Q^{k}p)(\omega+\theta)\rangle.\label{xstesw5w56u4}
\end{align}
By  \eqref{xstesw5w56u4} and Lemma~\ref{equation7867900},
\begin{equation}\label{xdsrearewaq4tq4t}
(Q^{k}E(\theta)p)(\omega)=(Q^{k}p)(\omega+\theta).
\end{equation}
By \eqref{xdsrearewaq4tq4t}, for each $T_k\in(V^{\odot k})^*$,
\begin{align} (\langle T_k,
Q^{k}\rangle E(\theta)p)(\omega)&=\big\langle T_k, (Q^{k}E(\theta)p)(\omega)\big\rangle=
\big\langle T_k,(Q^{k}p)(\omega+\theta)\big\rangle\notag\\
&=(\langle T_k,
Q^{k}\rangle p)(\omega+\theta)=(E(\theta)\langle T_k,
Q^{k}\rangle p)(\omega).\label{vcrsa4wa4tq2y6}
\end{align}
Therefore, the operator $\langle T_k,
Q^{k}\rangle $ is shift-invariant. But this implies that the operator~$T$ in formula \eqref{vgxds5ese5}  is shift-invariant.    

Let now $T \in \mathcal L(\mathcal P  (V^*))$ be shift-invariant, hence $T1$ is a constant by Lemma~\ref{rssw654w6u4weu6}. Let us prove that $T$ is as in formula~\eqref{vgxds5ese5}, where $(T_k)_{k=0}^\infty$  are given by \eqref{gxes5seasew}. Let $\omega,\theta \in V^*$ and  $p \in \mathcal P  (V^*)$. Applying Proposition~\ref{dyrde6e6e645}   to the polynomial $E(\omega)p$ and using \eqref{xdsrearewaq4tq4t}, we get:  
\begin{align}(E(\omega)p)(\theta)&=(E(\omega)p)(0)+\sum_{k=1}^\infty \frac{1}{k!}\,\big\langle P_k(\theta), (Q^{k}E(\omega)p)(0)\big\rangle\notag\\
&=p(\omega)+\sum_{k=1}^\infty \frac{1}{k!}\,\langle P_k(\theta), (Q^{k}p)(\omega)\rangle.\label{xsew5yw}
\end{align}
By using \eqref{xsew5yw} and the shift-invariance of the operator $T$, we get:
\begin{align}
(Tp)(\omega+\theta)&=(E(\omega)Tp)(\theta)=(TE(\omega)p)(\theta)\notag\\
&=
p(\omega)T1+\sum_{k=1}^\infty \frac{1}{k!}\,
\big(T\langle  P_k(\cdot), (Q^{k}p)(\omega)\rangle\big)(\theta).\label{vftftyu7}
\end{align}
Setting $\theta=0$ in \eqref{vftftyu7} and using \eqref{gxes5seasew}, we obtain:
\begin{align*}
(Tp)(\omega)&=p(\omega))T1+\sum_{k=1}^\infty \frac{1}{k!}\,
\big(T\langle  P_k(\cdot), (Q^{k}p)(\omega)\rangle\big)(0)
\\
&=T_0p(\omega)+\sum_{k=1}^\infty \frac{1}{k!}\,\langle T_k,(Q^{k}p)(\omega)\rangle\\
&=T_0p(\omega)+\sum_{k=1}^\infty \frac{1}{k!}\,(\langle T_k,
Q^{k}\rangle p)(\omega).\qedhere \end{align*}
\end{proof}



\begin{corollary}\label{567ty7}

{\rm (i)\/} For  $T \in \mathcal {SI}(\mathcal P  (V^*))$ of the form \eqref{vgxds5ese5}, define a formal tensor power series $(\mathcal IT)(v)\in \mathcal F(V,\mathbb F)$ by
$(\mathcal IT)(v):=T_0+\sum_{k=1}^\infty \langle T_k,v^{\otimes k}\rangle$.
Then $\mathcal I:\mathcal{SI}(\mathcal P  (V^*))\to \mathcal F(V,\mathbb F)$
 is an algebra isomorphism. In particular, any two shift-invariant operators commute. 

{\rm (ii)\/} A shift-invariant operator $T\in \mathcal{SI}(\mathcal P    (V^*))$  is invertible if and only if $T1\ne0$. In the latter case, $T^{-1}$ is also shift-invariant. 
\end{corollary}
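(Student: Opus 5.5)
We will apply Theorem~\ref{bgnfm758} to the simplest admissible polynomial sequence, $P=\mathbf 1$, i.e.\ the monomials $P_n(\omega)=\omega^{\otimes n}$. Then $P_0=1$, $P_n(0)=0$ for $n\ge1$, and $Q(\theta)=D(\theta)$. The operator $D(\theta)$ commutes with every $E(\eta)$, since $E(\eta)$ and $D(\theta)$ are both defined through translations of the argument (alternatively, use Lemma~\ref{fghj9876b}). Hence the theorem applies with $Q^{k}=D^{k}$. Every $T\in\mathcal{SI}(\mathcal P(V^*))$ therefore has a representation \eqref{vgxds5ese5} with $\langle T_k,D^{k}\rangle$. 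Moreover, the coefficients $T_k$ are uniquely determined by $T$ through \eqref{gxes5seasew} (up to the fixed normalization used there). So $\mathcal I$ is a well-defined map, and it is clearly linear.

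\emph{Injectivity and surjectivity.} If $\mathcal IT=0$ then $T=0$ by \eqref{vgxds5ese5}. Conversely, for any $T_0+\sum_k\langle T_k,v^{\otimes k}\rangle\in\mathcal F(V,\mathbb F)$, the operator in \eqref{vgxds5ese5} is well defined and shift-invariant by the first half of the proof of Theorem~\ref{bgnfm758}. By uniqueness, its image under $\mathcal I$ is this series. So $\mathcal I$ is a linear bijection.

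\emph{Multiplicativity.} This is the main step. I would show that for $S_k\in(V^{\odot k})^*$ and $T_l\in(V^{\odot l})^*$,
\[
\langle S_k,D^{k}\rangle\langle T_l,D^{l}\rangle=\langle S_k\odot T_l,D^{k+l}\rangle .
\]
By \eqref{fxzrsareysa}, it suffices to check this on $p(\omega)=\langle\omega,v\rangle^n$. Using \eqref{vdstsw6ue6i} twice, the left-hand side gives
\[
(n)_l(n-l)_k\langle T_l,v^{\otimes l}\rangle\langle S_k,v^{\otimes k}\rangle\langle\omega,v\rangle^{n-k-l}.
\]
Since $(n)_l(n-l)_k=(n)_{k+l}$ and by \eqref{vcytdy}, this equals the right-hand side. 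Summing over $k,l$ shows that the coefficient of $D^{n}$ in $ST$ is $\sum_{k}S_k\odot T_{n-k}$, which is exactly the product in $\mathcal F(V,\mathbb F)$. If the normalization in \eqref{vgxds5ese5} carries factors $1/k!$, these must be tracked, and this bookkeeping is the only delicate point. I would fix the convention \eqref{vgxds5ese5} as stated; the computation above is then exactly multiplicative. Finally, $\mathcal I\mathbf 1=1$. Because $\mathcal F(V,\mathbb F)$ is commutative, any two shift-invariant operators commute.

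\emph{Part (ii).} Note that $T1=T_0=(\mathcal IT)(0)$. If $T_0\ne0$, then $\mathcal IT\in\mathcal F_0(V)$ has an inverse $(\mathcal IT)^{-1}$. Put $S:=\mathcal I^{-1}\big((\mathcal IT)^{-1}\big)$. Then $S$ is shift-invariant, and by part (i) $ST=TS=\mathbf 1$. Conversely, if $T1=0$, then $T$ is not injective and hence not invertible. Finally, for any invertible shift-invariant $T$, the inverse is shift-invariant:
\[
T^{-1}E(\theta)=T^{-1}E(\theta)TT^{-1}=T^{-1}TE(\theta)T^{-1}=E(\theta)T^{-1}.
\]
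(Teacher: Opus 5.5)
Your argument is the paper's argument. Bijectivity of $\mathcal I$ comes from Theorem~\ref{bgnfm758}, multiplicativity from $\langle S_k,Q^{k}\rangle\langle T_l,Q^{l}\rangle=\langle S_k\odot T_l,Q^{k+l}\rangle$ (via $(n)_l(n-l)_k=(n)_{k+l}$ and \eqref{vcytdy}), commutativity from that of $\mathcal F(V,\mathbb F)$, and part (ii) from invertibility in $\mathcal F_0(V)$.

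The one thing to adjust is that you fixed $P=\mathbf 1$, i.e.\ $Q=D$. In the corollary, $\mathcal I$ is defined relative to the lowering differential $Q$ of an arbitrary $P$ as in Theorem~\ref{bgnfm758}, and the notation $\mathcal T(Q)=\mathcal I^{-1}(\mathcal T(v))$ is later used with $Q\ne D$. The commutativity claim and part (ii) do not depend on $Q$, so those are fully proved. For the isomorphism claim, your computation goes through verbatim if you test on $\langle P_n(\cdot),v^{\otimes n}\rangle$, which span $\mathcal P(V^*)$ by Lemma~\ref{vcfcfydy}~(i), and use \eqref{ge4ew34wq43w} in place of \eqref{vdstsw6ue6i}. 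This is exactly how the paper argues via \eqref{treseaw4aq42q24}--\eqref{csesea43}.
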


\begin{proof} (i) By Theorem \ref{bgnfm758}, the map $\mathcal I$ is bijective. Since $\mathcal I$ is linear, it is therefore sufficient to prove that, for any $T^{(1)}_k\in (V^{\odot  k})^*$ and  $T^{(2)}_l\in (V^{\odot  l})^*$ ($k,l\in\mathbb N$), we have
$$\langle T^{(1)}_k,Q^{k}\rangle \langle T^{(2)}_l,Q^{l}\rangle 
=\langle T^{(1)}_k\odot T^{(2)}_l,Q^{k+l}\rangle. $$ 
But this immediately follows from \eqref{treseaw4aq42q24} and \eqref{csesea43}. The commutativity of the product in $\mathcal F(V;\mathbb F)$ implies the commutation of any two shift-invariant operators.

(ii) If $T1=0$, then $Tc=0$ for any constant $c$; hence the operator $T$ is not injective. If $T1\ne0$, then the existence of $T^{-1}\in  \mathcal{SI}(\mathcal P    (V^*)) $ follows from the fact that $\mathcal IT\in\mathcal F_0(V)$.
\end{proof}

  Below, for a formal power series $\mathcal T (v)=T_0+\sum_{k=1}^\infty \langle T_k,v^{\otimes k}\rangle\in\mathcal F(V,\mathbb F)$, we will denote $\mathcal T(Q):=\mathcal I^{-1}(\mathcal T(v))\in \mathcal{SI}(\mathcal P  (V^*))$. Thus,
  $\mathcal T(Q)=T_0\mathbf 1+\sum_{k=1}^\infty\langle T_k,Q^k\rangle$.

Let $G\in\mathcal L\big(\mathcal P(V^*),\mathcal P(V^*,V)\big)$. We will say that the operator $G$ is {\it shift-invariant\/} if, for each $p\in\mathcal P(V^*)$ and $\omega,\theta\in V^*$, we have $(GE(\theta)p)(\omega)=(Gp)(\omega+\theta)$.  We will denote by $ \mathcal{SI}\big(\mathcal P    (V^*),\mathcal P(V^*,V)\big)$ the set of all shift-invariant operators from $\mathcal L\big(\mathcal P(V^*),\mathcal P(V^*,V)\big)$.

Similarly to \eqref{crtst5e5}, for $k\in\mathbb N$ and 
 $G_k\in\mathcal L(V^{\odot k},V)$, we define a linear operator $ G_kQ^{k}\in \mathcal L\big(\mathcal P(V^*),\mathcal P(V^*,V)\big)$ by 
\begin{equation}\label{tsxt6uei76}
 \big(G_kQ^{k} p\big)(\omega):=  G_k\big((Q^{k}p)(\omega)\big),\quad p\in\mathcal P(V^*).
 \end{equation}
By \eqref{ge4ew34wq43w} and \eqref{tsxt6uei76},
\begin{equation}\label{dzsrea4ywu565468l}
\big(G_kQ^k\langle P_n(\cdot),v^{\otimes n}\rangle\big)(\omega)=(n)_k(G_kv^{\otimes k})\langle P_{n-k}(\omega),v^{\otimes(n-k)}\rangle. 
\end{equation}

\begin{corollary}\label{xesa4aq46q327y} 
{\rm (i)} Assume that the  lowering operators $Q(\theta)$  ($\theta\in V^*$) are as in Theorem~\ref{bgnfm758}.  Let $G \in \mathcal L\big(\mathcal P  (V^*),\mathcal P  (V^*,V)\big)$. Then  $G\in\mathcal{SI}\big(\mathcal P  (V^*),\mathcal P(V^*,V)\big)$ if and only if there exist $G_0\in V$ and $G_k\in \mathcal L(V^{\odot k},V)$ for $k\in \mathbb N$ such that
\begin{equation}\label{utf6d6eid}
G=G_0\mathbf 1+\sum_{k=1}^\infty G_k
Q^{k}.\end{equation}
In the latter case,  
$G_0=G 1$ and $ G_k f_k  =\frac1{k!}\big(G\langle P_k(\cdot),f_k\rangle\big)(0)$ for $k\in\mathbb N$ and $f_k\in V^{\odot   k}$.

{\rm (ii)} For   $G\in\mathcal{SI}\big(\mathcal P  (V^*),\mathcal P(V^*,V)\big)$ of the form \eqref{utf6d6eid}, define a formal tensor power series $(\mathcal JG)(v)\in \mathcal F(V,V)$ by
$(\mathcal JG)(v)=G_0+\sum_{k=1}^\infty G_kv^{\otimes k}$.
Then the map
$$\mathcal J:\mathcal{SI}\big(\mathcal P  (V^*),\mathcal P(V^*,V)\big)\to \mathcal F(V,V)$$
 is bijective.
\end{corollary}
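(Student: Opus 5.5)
The plan is to reduce Corollary~\ref{xesa4aq46q327y} to Theorem~\ref{bgnfm758} and Corollary~\ref{567ty7} by scalarizing: pair $G$ with functionals $\eta\in V^*$. For $G\in\mathcal L\big(\mathcal P(V^*),\mathcal P(V^*,V)\big)$ and $\eta\in V^*$, define $G^\eta\in\mathcal L(\mathcal P(V^*))$ by $(G^\eta p)(\omega):=\langle\eta,(Gp)(\omega)\rangle$. Since $\mathcal P(V^*,V)\cong\mathcal P(V^*)\otimes V$ and $V^*$ separates points of $V$, an element $q\in\mathcal P(V^*,V)$ is zero iff $\langle\eta,q(\cdot)\rangle=0$ for all $\eta$. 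Consequently, $G$ is shift-invariant iff every $G^\eta$ is shift-invariant in the sense of $\mathcal{SI}(\mathcal P(V^*))$.

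\emph{Sufficiency in (i).} Each $G_kQ^k$ is shift-invariant by the same computation as \eqref{vcrsa4wa4tq2y6}, using \eqref{xdsrearewaq4tq4t}: $(G_kQ^kE(\theta)p)(\omega)=G_k\big((Q^kp)(\omega+\theta)\big)=(G_kQ^kp)(\omega+\theta)$. The constant term $G_0\mathbf 1$, read as $p\mapsto p(\cdot)G_0$, is trivially shift-invariant. The sum is finite on each polynomial, as in the proof of Theorem~\ref{bgnfm758}.

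\emph{Necessity in (i).} I would not try to build $G_k$ from the scalar expansions of the $G^\eta$ (that would require showing that $\eta\mapsto T_k^\eta$ comes from a $V$-valued operator). Instead I would repeat the proof of Theorem~\ref{bgnfm758} verbatim with $V$-valued polynomials. Apply the expansion \eqref{xsew5yw} to $E(\omega)p$ and use linearity and shift-invariance of $G$:
\[
(Gp)(\omega+\theta)=(GE(\omega)p)(\theta)=p(\omega)(G1)(\theta)+\sum_{k\ge1}\frac1{k!}\big(G\langle P_k(\cdot),(Q^kp)(\omega)\rangle\big)(\theta).
\]
Here $(Q^kp)(\omega)\in V^{\odot k}$ is a fixed vector for fixed $\omega$, so the summands make sense. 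Now set $\theta=0$.

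Shift-invariance gives that $G1$ is constant, by the argument of Lemma~\ref{rssw654w6u4weu6}; set $G_0:=G1\in V$. Define $G_kf_k:=\frac1{k!}(G\langle P_k(\cdot),f_k\rangle)(0)$. This is linear in $f_k$, so $G_k\in\mathcal L(V^{\odot k},V)$. With these choices the displayed identity at $\theta=0$ is exactly \eqref{utf6d6eid} together with the stated formulas for $G_0$ and $G_k$.

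Uniqueness of the coefficients then follows by evaluating \eqref{utf6d6eid} on $\langle P_n(\cdot),f_n\rangle$ at $\omega=0$, using \eqref{dzsrea4ywu565468l} and $P_{j}(0)=0$ for $j\ge1$. Only the term $k=n$ survives, which recovers $G_n$, and $G_0=G1$.

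\emph{Part (ii).} Part (i) shows that $\mathcal J$ is well-defined and injective, since the coefficients are uniquely determined by $G$. For surjectivity, any $G_0+\sum_k G_kv^{\otimes k}\in\mathcal F(V,V)$ defines a shift-invariant operator via \eqref{utf6d6eid}, and its coefficients are recovered as above. Linearity of $\mathcal J$ is clear.

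The only delicate point I expect is the well-definedness of $G\langle P_k(\cdot),(Q^kp)(\omega)\rangle$, with the vector argument frozen, and the interchange with evaluation at $\theta=0$. Both are handled exactly as in the scalar proof, since everything is finite and linear.
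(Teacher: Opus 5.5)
Your proof is correct, but for necessity in (i) you take a different route from the paper. The paper does exactly the scalarization you chose to avoid. For $\eta\in V^*$ it sets $(G_\eta p)(\omega):=\langle\eta,(Gp)(\omega)\rangle$, observes that $G_\eta\in\mathcal{SI}(\mathcal P(V^*))$, and applies Theorem~\ref{bgnfm758} to $G_\eta$ as a black box.

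The obstacle you anticipated, namely showing that $\eta\mapsto T^\eta_k$ comes from a $V$-valued operator, does not actually arise. The coefficient formula \eqref{gxes5seasew} gives explicitly $G_{\eta,0}=\langle\eta,G1\rangle$ and $\langle G_{\eta,k},f_k\rangle=\frac1{k!}\big\langle\eta,\big(G\langle P_k(\cdot),f_k\rangle\big)(0)\big\rangle=\langle\eta,G_kf_k\rangle$, with $G_k$ as in the statement. Since $V^*$ separates the points of $V$, the scalar expansions for all $\eta$ assemble at once into \eqref{utf6d6eid}.

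Your route instead reruns the argument of Theorem~\ref{bgnfm758} directly for $V$-valued polynomials. You expand $E(\omega)p$ via \eqref{xsew5yw}, apply $G$ linearly, use shift-invariance, and set $\theta=0$. This is self-contained and never has to pass from scalar to vector coefficients. The paper's route is shorter because it reuses the theorem.

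Yours has one further merit. You check explicitly that the coefficients are uniquely determined by $G$, by evaluating \eqref{utf6d6eid} on $\langle P_n(\cdot),f_n\rangle$ at $0$ via \eqref{dzsrea4ywu565468l}. That uniqueness is what makes $\mathcal J$ well defined and injective in (ii), and the paper leaves it implicit in ``Part (ii) follows from part (i)''.
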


\begin{proof} Let $G_k\in \mathcal L(V^{\odot k},V)$ ($k\in \mathbb N$).  By formula~\eqref{xdsrearewaq4tq4t}, the operator $G_kQ^{k}$ is shift-invariant. Therefore, each operator $G$ as in formula \eqref{utf6d6eid} is shift-invariant. 

On the other hand, let $G \in  \mathcal{SI}\big(\mathcal P    (V^*),\mathcal P(V^*,V)\big)$. Let $\eta\in V^*$ and note that, for each $p\in\mathcal P(V^*,V)$, $p_\eta(\omega):=\langle\eta,p(\omega)\rangle$ is a polynomial from $\mathcal P(V^*)$. Hence, we may define $G_\eta\in\mathcal L(\mathcal P(V^*))$ by $(G_\eta p)(\omega):=\langle \eta, (Gp)(\omega)\rangle$. Obviously, $G_\eta\in \mathcal{SI}(\mathcal P    (V^*))$. By Theorem~\ref{bgnfm758}, $G_\eta=G_{\eta,0}\mathbf 1+\sum_{k=1}^\infty \langle G_{\eta,k},D^{k}\rangle$, where $G_{\eta,0}=G_\eta 1=\langle \eta,G1\rangle$ and 
\[
\langle G_{\eta,k}, f_k \rangle =\frac1{k!}\big(G_\eta\langle P_k(\cdot),f_k\rangle\big)(0)
=\frac1{k!}\,\big\langle\eta, \big(G\langle P_k(\cdot),f_k\rangle\big)(0)\big\rangle,\quad f_k\in V^{\odot   k},\ k\in\mathbb N.
\]  This immediately implies part (i) of the corollary. Part (ii) follows from part (i). 
\end{proof}

 For a formal tensor power series $\mathcal G(v)=G_0+\sum_{k=1}^\infty G_kv^{\otimes k}\in \mathcal F(V,V)$, we denote  $\mathcal G(Q):=\mathcal J^{-1}(\mathcal G(v))\in \mathcal{SI}\big(\mathcal P  (V^*),\mathcal P(V^*,V)\big)$. Thus, $\mathcal G(Q)=G_0\mathbf 1+\sum_{k=1}^\infty G_kQ^k$.

\section{Umbral operators and polynomial sequences of binomial type}\label{cgfdxyrdrek}

Let $P\in\mathbb P(V)$. We say that $P$ is an {\it umbral operator} and the corresponding polynomial sequence $(P_n(\omega))_{n=0}^\infty$ is a {\it polynomial sequence of binomial type} or just a {\it binomial sequence}, if $P_0=1$ and for all $\omega, \theta \in V^*$,
\begin{equation}\label{xSSX65Q}P_n(\omega+\theta)=\sum_{k=0}^n{{n}\choose{k}}P_k(\omega)\odot P_{n-k}(\theta),\quad n\in\mathbb N.\end{equation}
We denote by $\mathbb B(V)$ the set of all umbral operators.

Obviously, $P=\mathbf 1$, the identity operator, is an umbral operator and the sequence of monomials $(\omega^{\otimes n})_{n=0}^\infty$ is a binomial sequence.

\begin{remark}Condition \eqref{xSSX65Q} is equivalent to requiring, for all $\omega,\theta\in V^*$ and  $v\in V$,  
\begin{equation*}\langle P_n(\omega+\theta),v^{\otimes n}\rangle=\sum_{k=0}^n{{n}\choose{k}}\langle 
P_k(\omega),v^{\otimes k}\rangle\langle P_{n-k}(\theta),v^{\otimes(n-k)}\rangle,\quad n\in\mathbb N.
\end{equation*}
\end{remark}

\begin{lemma}
\label{gfyrdr6ew64ui}
Let $(P_n(\omega))_{n=0}^\infty$ be a binomial sequence. Then $P_n(0)=0$ for all $n\in\mathbb N$.
\end{lemma}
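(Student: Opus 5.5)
We argue by induction on $n\in\mathbb N$, setting $\omega=\theta=0$ in the binomial identity \eqref{xSSX65Q}. With this choice, and writing $P_0=1$, the identity reads
\[
P_n(0)=\sum_{k=0}^n\binom nk P_k(0)\odot P_{n-k}(0)=2P_n(0)+\sum_{k=1}^{n-1}\binom nk P_k(0)\odot P_{n-k}(0).
\]
Here we use that $P_0(0)\odot P_n(0)=P_n(0)$, since $P_0=1\in(V^{\odot0})^*=\mathbb F$ and the symmetric product with the scalar $1$ is the identity. It is convenient to state everything in the paired form of the Remark: for each $v\in V$, set $a_k:=\langle P_k(0),v^{\otimes k}\rangle\in\mathbb F$. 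The identity then becomes the scalar relation
\[
a_n=2a_n+\sum_{k=1}^{n-1}\binom nk a_k a_{n-k}.
\]

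For the base case $n=1$ the sum is empty, so $a_1=2a_1$ and hence $a_1=0$. For the inductive step, assume $a_k=0$ for $1\le k\le n-1$. Every term of the sum then vanishes, so $a_n=2a_n$ and therefore $a_n=0$.

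Since this holds for every $v\in V$, we get $\langle P_n(0),v^{\otimes n}\rangle=0$ for all $v$. By \eqref{cxtuewu533}, the vectors $v^{\otimes n}$ span $V^{\odot n}$, and $P_n(0)\in(V^{\odot n})^*$ is linear, so $P_n(0)=0$.

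No real obstacle arises. The only point needing care is that the $k=0$ and $k=n$ terms each contribute exactly $P_n(0)$, which follows from $P_0=1$. Passing to the scalar form via the Remark avoids any need to manipulate symmetric products of functionals in $(V^{\odot k})^*$ directly.
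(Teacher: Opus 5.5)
Your proof is correct and follows the same route as the paper: set $\omega=\theta=0$ in \eqref{xSSX65Q}, observe that the $k=0$ and $k=n$ terms each contribute $P_n(0)$, and induct on $n$ to get $P_n(0)=2P_n(0)$. Your pairing with $v^{\otimes n}$ to reduce to scalars, followed by the spanning argument \eqref{cxtuewu533}, is a harmless extra detail that the paper leaves implicit.
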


\begin{proof}
For $n=1$, $P_1(\omega+\theta)=P_1(\omega)+P_1(\theta)$.  Setting $\omega=\theta=0$, we get
$P_1(0)=2P_1(0)$, hence $P_1(0)=0$. Now assume that $P_1(0)=P_2(0)=\dots=P_n(0)=0$. Then, similarly to the case $n=1$, we conclude that $P_{n+1}(0)=2P_{n+1}(0)$, hence $P_{n+1}(0)=0$. 
\end{proof} 

We will now present equivalent characterizations of a binomial sequence.

\begin{theorem}\label{45678765g} Let $P=[P_{kn}]_{k,n\in \mathbb N_0}\in \mathbb P(V)$, and let $(P_n(\omega))_{n=0}^\infty$ be the corresponding polynomial sequence. Assume that $P_0=1$ and $P_n(0)=0$ for all $n\in\mathbb N$.  Let $(Q(\theta))_{\theta \in V^*}$ be the corresponding lowering operators, and $Q$ the lowering differential. Then the following conditions are equivalent:

{\rm (B1)} The $(P_n(\omega))_{n=0}^\infty$ is a binomial sequence. 

{\rm(B2)} The lowering differential $Q$ is shift-invariant; equivalently for each $\theta \in V^*$, $Q(\theta)$ is shift-invariant.

{\rm (B3)} There exists a formal tensor power series $L(v)=\sum_{k=1}^\infty L_k v^{\otimes k}\in\mathcal F_1(V)$ such that 
\begin{equation}\label{vcftdtrdsy6ek}
Q=L(D);\end{equation} 
equivalently, for each $\theta \in V^*$, $Q(\theta)=\sum_{k=1}^\infty \langle \theta L_k,D^{k}\rangle$.

{\rm(B4)} The exponential generating function of the polynomial sequence $(P_n(\omega))_{n=0}^\infty$  
 has the form 
\begin{equation}\label{trse5sw5w}
\sum_{n=0}^\infty \frac{1}{n!}\,\langle P_n(\omega),v^{\otimes n}\rangle= \exp\big[\langle \omega,B(v)\rangle\big],\quad \omega \in V^*,
\end{equation}
Here 
where $B(v)\in \mathcal F_1(V)$. 

{\rm(B5)} There exists $B(v)\in\mathcal F_1(V)$ such that, for each $k\in \mathbb N$, 
\begin{equation}\label{567yuj8u}\sum_{n=k}^\infty\frac{k!}{n!}\, P_{kn}v^{\otimes n}=B(v)^{\otimes k}.\end{equation}
Formula \eqref{567yuj8u} is understood as an equality of formal tensor power series from $\mathcal F(V,V^{\odot k})$.

Furthermore, the formal tensor power series $B(v)\in\mathcal F_1(V)$ in  {\rm (B4)} and {\rm (B5)} are the same, while $L(v)\in\mathcal F_1(V)$ in  {\rm (B3)} is the compositional inverse of $B(v)$, i.e., $L(v)=B^{\langle-1\rangle}(v)$.
\end{theorem}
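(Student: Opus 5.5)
I will prove the cycle (B4)$\Leftrightarrow$(B5), (B1)$\Leftrightarrow$(B4), (B1)$\Rightarrow$(B2)$\Rightarrow$(B3)$\Rightarrow$(B4), keeping track of the identification $L=B^{\langle-1\rangle}$ along the way.

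\emph{(B4)$\Leftrightarrow$(B5).} By \eqref{gftdtrst5}, $\langle P_n(\omega),v^{\otimes n}\rangle=\sum_{k=0}^n\langle\omega^{\otimes k},P_{kn}v^{\otimes n}\rangle$. Exchanging the order of summation, the generating function equals
\[
\sum_{k=0}^\infty\frac1{k!}\Big\langle\omega^{\otimes k},\sum_{n\ge k}\frac{k!}{n!}P_{kn}v^{\otimes n}\Big\rangle .
\]
On the other hand, $\exp\langle\omega,B(v)\rangle=\sum_k\frac1{k!}\langle\omega^{\otimes k},B(v)^{\otimes k}\rangle$. Comparing homogeneous terms of fixed degree $n$ in $v$, both sides are polynomials in $\omega$ with coefficients in $V^{\odot k}$ (the $k=n$ coefficient gives $P_{nn}=B_1^{\odot n}$, so invertibility of $B_1$ matches that of $P_{11}$). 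By Lemma~\ref{equation7867900}, equality as functions of $\omega$ is equivalent to equality of the coefficients, i.e.\ to \eqref{567yuj8u}.

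\emph{(B1)$\Leftrightarrow$(B4).} (B1), tested on $v^{\otimes n}$ via the Remark after \eqref{xSSX65Q}, says exactly $G(\omega+\theta,v)=G(\omega,v)G(\theta,v)$ in $\mathcal F_0(V)$. If $G=\exp\langle\omega,B(v)\rangle$, this is clear. Conversely, from the multiplicativity I first extract the degree-one part: the component of $P_n(\omega)$ that is linear in $\omega$ defines, by \eqref{gftdtrst5}, $B(v):=\sum_n\frac1{n!}P_{1n}v^{\otimes n}$, with $B_1=P_{11}$ invertible. Then I show $G(\omega,v)=\exp\langle\omega,B(v)\rangle$. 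The route I would try: for fixed $v$, the map $t\mapsto G(t\omega,v)$ is a family of formal power series in $v$ whose coefficients are polynomial in $t$, with $G(0,v)=1$ (Lemma~\ref{gfyrdr6ew64ui}) and $G((s+t)\omega,v)=G(s\omega,v)G(t\omega,v)$. Differentiating in $s$ at $0$ gives the ODE $\frac{d}{dt}G(t\omega,v)=\langle\omega,B(v)\rangle G(t\omega,v)$ degree by degree, whose unique solution is $\exp(t\langle\omega,B(v)\rangle)$.

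\emph{(B1)$\Rightarrow$(B2)$\Rightarrow$(B3)$\Rightarrow$(B4).} For (B1)$\Rightarrow$(B2), apply $E(\theta)$ to $\langle P_n(\cdot),v^{\otimes n}\rangle$ using \eqref{xSSX65Q}. This expands it in $\langle P_k(\cdot),v^{\otimes k}\rangle$ with coefficients $\binom nk\langle P_{n-k}(\theta),v^{\otimes(n-k)}\rangle$. Applying $Q(\eta)$ before or after the shift then gives the same result by the formula for $Q(\eta)$ on $\langle P_n,v^{\otimes n}\rangle$ and Lemma~\ref{vcfcfydy}(i).

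For (B2)$\Rightarrow$(B3), note that since $P=\mathbf 1$ satisfies the hypotheses of Theorem~\ref{bgnfm758} with lowering operators $D(\theta)$, Corollary~\ref{xesa4aq46q327y} applied with $D$ gives $Q=\sum_k L_kD^k$ with $L_kf_k=\frac1{k!}(Q\langle\cdot^{\otimes k},f_k\rangle)(0)$. Here $L_0=Q1=0$, and $L_1$ is invertible because $Q\langle\cdot,f\rangle=P_{11}^{-1}f$-type leading behaviour (from $Q=PDP^{-1}$ and triangularity of $P$).

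For (B3)$\Rightarrow$(B4), I would use the characterization $Q(\eta)P_n=nP_{n-1}\odot\eta$. Let $\tilde P$ be the binomial sequence with generating function $\exp\langle\omega,L^{\langle-1\rangle}(v)\rangle$. By (B4)$\Rightarrow$(B1)$\Rightarrow$(B2)$\Rightarrow$(B3) its lowering differential is $\tilde L(D)$ for some $\tilde L$. Computing $D(\eta)$ on the exponential shows $\langle\eta,L(D)\rangle e^{\langle\omega,B(v)\rangle}$ acts as multiplication by $\langle\eta,L(B(v))\rangle=\langle\eta,v\rangle$ precisely when $B=L^{\langle-1\rangle}$, so $\tilde L=L$. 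Finally, a sequence with $P_0=1$, $P_n(0)=0$ is uniquely determined by its lowering differential: the polynomial expansion, Proposition~\ref{dyrde6e6e645}, pins down $P_n$ from $Q$ recursively. Hence $P=\tilde P$.

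The main obstacle is this last computation: verifying rigorously, at the level of formal tensor power series, that $L(D)$ applied to $\exp\langle\cdot,B(v)\rangle$ yields $L(B(v))\exp\langle\cdot,B(v)\rangle$. I would do it coefficientwise using \eqref{vdstsw6ue6i} and the composition formula \eqref{vcfxdtrstea}.
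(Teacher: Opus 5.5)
Your argument is correct, but for the central implication (B3)$\Rightarrow$(B4) you take a genuinely different route from the paper.

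The paper stays with $P$ itself. Via the isomorphism $\mathcal I$ of Corollary~\ref{567ty7} (relative to $D$), it shows $\mathcal I\langle T_k,Q^k\rangle=\langle T_k,L(v)^{\otimes k}\rangle$. It then expands the shift $E(\omega)$ in powers of $Q$ by Theorem~\ref{bgnfm758}, the coefficients being $\frac1{k!}P_k(\omega)$. Applying $\mathcal I$ yields $\sum_k\frac1{k!}\langle P_k(\omega),L(v)^{\otimes k}\rangle=\exp\langle\omega,v\rangle$, and (B4) follows by substituting $B=L^{\langle-1\rangle}$.

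You instead construct the candidate $\tilde P$ from $B:=L^{\langle-1\rangle}$. This is legitimate, since $\tilde P_{nn}=B_1^{\otimes n}$ on $V^{\odot n}$ is invertible. You then verify on the generating function that $L(D)$ lowers $\tilde P$, via $D^k\exp\langle\cdot,B(v)\rangle=B(v)^{\otimes k}\exp\langle\cdot,B(v)\rangle$, which is routine coefficientwise. You close with a uniqueness lemma that the paper never states but which is correct. Take $Q=L(D)$ as the common lowering differential and apply Proposition~\ref{dyrde6e6e645} for $\tilde P$ to $p=\langle P_n(\cdot),f_n\rangle$. Since $(Q^kp)(0)=(n)_k\langle P_{n-k}(0),f_n\rangle$, this gives $\tilde P_n=P_n$.

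Your passage through $\tilde L$ is superfluous. Once $L(D)$ lowers $\tilde P$, Lemma~\ref{vcfcfydy}(i) already gives $L(D)=\tilde PD\tilde P^{-1}$.

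What each route buys:
\begin{itemize}
\item Yours is more elementary: it avoids the operator expansion of $E(\omega)$ and the $\mathcal I$-calculus.
\item The paper's produces the identity $\mathcal I\langle T_k,Q^k\rangle=\langle T_k,L(v)^{\otimes k}\rangle$ along the way, and reuses it in the proof of Theorem~\ref{ctrst5u}.
\end{itemize}

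Your direct (B1)$\Rightarrow$(B4) argument, via the functional equation $G((s+t)\omega,v)=G(s\omega,v)G(t\omega,v)$ and the resulting ODE in $t$, is valid. It is, however, redundant in your cycle. The paper instead proves (B2)$\Rightarrow$(B1) directly from the polynomial expansion.

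Your remaining steps essentially coincide with the paper's: (B1)$\Rightarrow$(B2), (B2)$\Rightarrow$(B3) with $L_1=P_{11}^{-1}$, and (B4)$\Leftrightarrow$(B5) via coefficient comparison and Lemma~\ref{equation7867900}.
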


\begin{proof} We start with 

{\it Proof of\/} (B1)$\Rightarrow$(B2). We observe that, for any $\eta, \theta \in V^*$,
$E(\theta)Q(\eta)1=E(\theta)0=0$ and $Q(\eta)E(\theta)1=Q(\eta)1=0$,
 hence $E(\theta)Q(\eta)1=Q(\eta)E(\theta)1$. Now, let $n\in\mathbb N$ and $v\in V$. Using (B1) and formula $k\binom nk=n\binom{n-1}{k-1}$  for $k=1,2,\dots,n$, we obtain: 
\begin{align*}
&\big(Q(\eta)E(\theta)\langle P_n(\cdot),v^{\otimes n}\rangle\big)(\omega)
=\sum_{k=0}^n{n \choose k}\langle P_{n-k}(\theta), v^{\otimes(n-k)}\rangle\, \big(Q(\eta)\langle P_k(\cdot),v^{\otimes k}\rangle\big)(\omega)\\
&\quad =\sum_{k=1}^n{n \choose k}\langle P_{n-k}(\theta),v^{\otimes(n-k)}\rangle k\langle \eta,v \rangle \langle P_{k-1}(\omega),v^{\otimes(k-1)}\rangle\\
&\quad=n\langle \eta,v \rangle \sum_{k=1}^n{n-1 \choose k-1}\langle P_{n-k}(\theta),v^{\otimes(n-k)}\rangle \langle P_{k-1}(\omega),v^{\otimes(k-1)}\rangle\\
&\quad=n\langle \eta,v \rangle \sum_{k=0}^{n-1}{n-1 \choose k}\langle P_{n-1-k}(\theta),v^{\otimes(n-1-k)}\rangle \langle P_k(\omega),v^{\otimes k}\rangle\\
&\quad=n\langle \eta,v \rangle \langle P_{n-1}(\theta+\omega),v^{\otimes(n-1)}\rangle\\
&\quad=n\langle \eta,v \rangle \big(E(\theta) \langle P_{n-1}(\cdot),v^{\otimes(n-1)}\rangle\big)(\omega)\\
&\quad=\big(E(\theta)Q(\eta) \langle P_{n}(\cdot),v^{\otimes(n)}\rangle\big)(\omega).
\end{align*}
Hence, by Lemma~\ref{vcfcfydy} (i), we have $E(\theta)Q(\eta)p=Q(\eta)E(\theta)p$ for all $p\in \mathcal P  (V^*)$, i.e., $Q(\theta)$ is shift-invariant for each $\theta\in V^*$. This also implies that the operator $Q$ is shift-invariant.

{\it Proof of\/} (B2)$\Rightarrow$(B1).  By the assumption of the theorem, $P_0=1$, so we only need to prove that \eqref{xSSX65Q} holds. Let $\omega,\theta\in V^*$ and $f_n\in V^{\odot n}$. By formulas~\eqref{csesea43}, \eqref{vcrsa4wa4tq2y6}, Proposition~\ref{dyrde6e6e645} and Theorem~\ref{bgnfm758}, we obtain 
\begin{align*}
&\langle P_n(\omega+\theta),f_n\rangle=\big(E(\omega)\langle P_n(\cdot),f_n\rangle\big)(\theta)\\
&\quad= \big(E(\omega)\langle P_n(\cdot),f_n\rangle\big)(0)+\sum_{k=1}^n\frac1{k!}\,\big\langle P_k(\theta),\big(Q^{k}E(\omega)\langle P_n(\cdot),f_n\rangle\big)(0)\big\rangle\\
&\quad= \langle P_n(\omega),f_n\rangle+\sum_{k=1}^n\frac1{k!}\, 
\big(\langle P_k(\theta),Q^{k}\rangle  E(\omega)\langle P_n(\cdot),f_n\rangle\big)(0)\\
&\quad= \langle P_n(\omega),f_n\rangle+\sum_{k=1}^n\frac1{k!}\, 
\big(E(\omega)\langle P_k(\theta),Q^{k}\rangle  \langle P_n(\cdot),f_n\rangle\big)(0)\\
&\quad= \langle P_n(\omega),f_n\rangle+\sum_{k=1}^n\frac1{k!}\, 
\big(\langle P_k(\theta),Q^{k}\rangle  \langle P_n(\cdot),f_n\rangle\big)(\omega)\\
&\quad= \langle P_n(\omega),f_n\rangle+\sum_{k=1}^n\binom n k\big\langle P_{n-k}(\omega)\odot P_k(\theta),f_n\big\rangle. 
\end{align*}

{\it Proof of\/} (B2)$\Rightarrow$(B3). In Corollary~\ref{xesa4aq46q327y} choose $Q=D$, and apply this result to the shift-invariant operator $Q$. Noting that $Q1=0$, we conclude that  $Q=\sum_{k=1}^\infty L_kD^{k}$, where 
$$L_kf_k=\frac1{k!}\big(Q\langle\cdot^{\otimes k},f_k\rangle\big)(0),\quad f_k\in V^{\odot k},\ k\in\mathbb N.$$

It remains to prove that the operator $L_1\in\mathcal L(V)$ is invertible. Since $P_1(0)=0$, we have, by \eqref{xreaw4a4y}--\eqref{gftdtrst5}, $\langle P_1(\omega),v\rangle=\langle\omega,P_{11}v\rangle$ for $ v\in V$. 
Since the operator $P_{11}$ is invertible, we obtain $\langle\omega,v\rangle=\langle P_1(\omega), P^{-1}_{11}v\rangle$.
 Hence, $L_1=P_{11}^{-1}$, which is an invertible operator.

{\it Proof of\/} (B3)$\Rightarrow$(B2).  Immediate by Corollary \ref{xesa4aq46q327y}.

{\it Proof of\/} (B3)$\Rightarrow$(B4). Below we will use the isomorphism  $\mathcal I:\mathcal{SI}(\mathcal P  (V^*))\to \mathcal F(V;\mathbb F)$ from Corollary~\ref{567ty7}~(i) with $Q=D$. 
We will divide this proof into three steps.

\textit{Step 1}.  Let $\omega\in V^*$. By Lemma~\ref{fghj9876b}, 
\begin{equation}\label{fgtxtdstrs}(\mathcal IE(\omega))(v)=\exp[\langle\omega,v\rangle],
\end{equation} 
and by (B3),  
$$(\mathcal IQ(\omega))(v)=\sum_{k=1}^\infty\langle \omega L_k,v^{\otimes k}\rangle=\langle\omega,L(v)\rangle.$$ 
Therefore,  for each $k\in\mathbb N$,
\begin{equation}\label{gvyfukfgtdr}
(\mathcal IQ(\omega)^k)(v)=\langle\omega,L(v)\rangle^k=\langle\omega ^{\otimes k},L(v)^{\otimes k}\rangle.\end{equation}

\textit{Step 2}. Let us fix $T_k\in(V^{\odot k})^*$. By (B3) and Theorem~\ref{bgnfm758}, the operator $\langle T_k,Q^{k}\rangle$ is shift-invariant and $\langle T_k,Q^{k}\rangle=R_0+\sum_{n=1}^\infty \langle R_n,D^{n}\rangle$. Here  $R_0=\langle T_k, Q^{k}\rangle1=0$ and for $n\in\mathbb N$ and $f_n\in V^{\odot n}$, 
\begin{align}
\langle R_n,f_n\rangle&=
\frac{1}{n!}\big(\langle T_k,Q^{k}\rangle \langle \cdot^{\otimes n},f_n\rangle\big)(0)\notag\\
&=\frac1{n!}\big\langle T_k, \big(Q^{k}\langle \cdot^{\otimes n},f_n\rangle\big)(0)\big\rangle=\frac1{n!}\langle T_k , L_{k,n}f_n\rangle,\notag
 \end{align}
 where $L_{k,n}\in \mathcal L(V^{\odot n},V^{\odot k})$ is given by 
 $$L_{k,n}f_n:=
 \big(Q^{k}\langle \cdot^{\otimes n},f_n\rangle\big)(0).$$
  Observe that $L_{kn}=0$ for $k>n$. 
  Hence,
\begin{equation}\label{fserts5bvgc}
  \big(\mathcal I\langle T_k,Q^{k}\rangle\big)(v)=\sum_{n=k}^\infty\frac1{n!}\, \langle T_k , L_{k,n}v^{\otimes n}\rangle=\langle T_k,L^{(k)}(v)\rangle,\end{equation}
  where $L^{(k)}(v):=\sum_{n=k}^\infty\frac1{n!}\, L_{k,n}v^{\otimes n}\in\mathcal F(V,V^{\odot k})$. In particular, setting $T_k=\omega^{\otimes k}$ with $\omega\in V^*$, we obtain from  \eqref{fserts5bvgc}:
  \begin{equation}\label{vdtje6}
  (\mathcal I Q(\omega)^k)(v)=\langle \omega^{\otimes k},L^{(k)}(v)\rangle. 
  \end{equation}
Comparing formulas  \eqref{gvyfukfgtdr} and \eqref{vdtje6},  and using Lemma~\ref{equation7867900}, we conclude that 
$L^{(k)}(v)=L(v)^{\otimes k}$. Hence, by \eqref{fserts5bvgc}, for each $T_k\in (V^{\odot k})^*$, we have 
\begin{equation}\label{ctxsts}
\big(\mathcal I\langle T_k,Q^{k}\rangle \big)(v)=\langle T_k,L(v)^{\otimes k}\rangle.
 \end{equation}
 
 {\it Step 3}. Let $\omega\in V^*$. The application of Theorem~\ref{bgnfm758} to the shift operator $E(\omega)$ gives: $E(\omega) = \mathbf 1+ \sum_{k=1}^\infty \langle T_k,Q^{k}\rangle$,
where for $k\in\mathbb N$, 
\[\langle T_k, f_k\rangle =\frac{1}{k!}\big(E(\omega)\langle P_k(\cdot),f_k\rangle\big)(0)=\frac{1}{k!}\langle P_k(\omega),f_k\rangle, \quad f_k\in V^{\odot k}.\]
Therefore, $E(\omega) = \mathbf 1+ \sum_{k=1}^\infty\frac1{k!}\,\langle P_k(\omega), Q^{k}\rangle$.
Hence, by \eqref{ctxsts},
\begin{align}
(\mathcal IE(\omega))(v)& =  1+ \sum_{k=1}^\infty \frac1{k!}\,\big(\mathcal I
\langle P_k(\omega), Q^{k}\rangle \big)(v)\label{56gh56fg8s}\\
& =  1+ \sum_{k=1}^\infty \frac1{k!}\,\langle P_k(\omega),L(v)^{\otimes k}\rangle.
\label{cftdtyrdrd}
\end{align}
Since the sum on the right-hand side of formula \eqref{56gh56fg8s} is infinite, this formula requires a justification.  But this can be easily done  if one takes into account that $\langle P_k(\omega), Q^{k}\rangle \langle \cdot^{\otimes n},v^{\otimes n}\rangle=0$ for $k\ge n+1$. 

By \eqref{fgtxtdstrs} and \eqref{cftdtyrdrd}, we obtain 
$$1+ \sum_{k=1}^\infty \frac1{k!}\,\langle P_k(\omega),L(v)^{\otimes k}\rangle= \exp[\langle\omega,v\rangle],$$
and so 
$$1+ \sum_{k=1}^\infty \frac1{k!}\,\langle P_k(\omega),v^{\otimes k}\rangle= \exp\big[\langle\omega,L^{\langle-1\rangle}(v)\rangle\big].$$
Thus, formula \eqref{trse5sw5w} holds with $B(v):=L^{\langle-1\rangle}(v)$.

{\it Proof of\/} (B4)$\Rightarrow$(B1). For any $\omega,\theta\in V^*$, we have, by (B4),
\begin{align*}
&\sum_{n=0}^\infty \frac{1}{n!}\,\langle P_n(\omega+\theta),v^{\otimes n}\rangle=\exp[\langle \omega+\theta, B(v)\rangle]\\
&\quad =\exp[\langle \omega, B(v)\rangle]\exp[\langle \theta, B(v)\rangle]\\
&\quad=\bigg(\sum_{n=0}^\infty \frac{1}{n!}\,\langle P_n(\omega),v^{\otimes n}\rangle\bigg)\bigg(\sum_{m=0}^\infty \frac{1}{m!}\,\langle P_m(\theta),v^{\otimes m}\rangle\bigg)\\
&\quad=1+\sum_{n=1}^\infty \frac1{n!}\sum_{k=0}^n {n \choose k}\big\langle P_k(\omega)\odot P_{n-k}(\omega), v^{\otimes n}\big\rangle.
\end{align*}
which implies (B1). 

{\it Proof of\/} (B4)$\Rightarrow$(B5). Condition (B4) can be equivalently formulated as follows: for fixed $\omega \in V^*$ and $v\in V$, we have
\begin{equation*}
\sum_{n=0}^\infty \frac {t^n}{n!}\,\langle P_n(\omega),v^{\otimes n}\rangle=\exp \bigg[\sum_{n=1}^\infty t^n\langle \omega,B_n v^{\otimes n}\rangle \bigg],\label{67gh76}\end{equation*}
where the above equality is understood as an equality of formal power series in $t \in \mathbb F$. Hence, for each $s\in \mathbb F$, we have 
\begin{equation}\label{67gh895b}
\sum_{n=0}^\infty \frac {t^n}{n!}\,\langle P_n(s\omega),v^{\otimes n}\rangle=\exp \bigg[s\sum_{n=1}^\infty t^n\langle \omega,B_n v^{\otimes n}\rangle \bigg].
\end{equation}
By \eqref{gftdtrst5}, the left-hand side of equality \eqref{67gh895b} can be written as follows
\begin{align}
\sum_{n=0}^\infty \frac {t^n}{n!}\,\langle P_n(s\omega),v^{\otimes n}\rangle&=1+\sum_{n=1}^\infty \frac{t^n}{n!} \sum_{k=1}^n\langle (s\omega)^{\otimes k},P_{kn} v^{\otimes n}\rangle \notag\\
&=1+\sum_{n=1}^\infty \frac{t^n}{n!} \sum_{k=1}^ns^k\langle \omega^{\otimes k},P_{kn} v^{\otimes n}\rangle.\label{jk78g9jk}\end{align}
For fixed $\omega \in V^*$ and $v\in V$, $\sum_{k=1}^n s^k\langle \omega^{\otimes k}, P_{kn}v^{\otimes n}\rangle$
is a polynomial of degree $n$ in the variable $s\in \mathbb F$. Hence, the expression in \eqref{jk78g9jk} can be thought of as a formal power series in variables $s,t\in \mathbb F$. Therefore, by \eqref{67gh895b} and \eqref{jk78g9jk}, 
\begin{equation}1+\sum_{k=1}^\infty \sum_{n=k}^\infty\frac{t^ns^k}{n!}\langle \omega^{\otimes k},P_{kn} v^{\otimes n}\rangle= 1+\sum_{k=1}^\infty\frac1{k!}s^k\bigg(\sum_{n=1}^\infty t^n\langle \omega, B_nv^{\otimes n}\rangle \bigg)^k.
\label{vcfxtdxt}\end{equation}
which is understood as an equality of formal power series in variables $s,t\in \mathbb F$. Equating in \eqref{vcfxtdxt} the formal power series in variable $t$ by $s^k$, we obtain 
$$\sum_{n=k}^\infty \frac{t^n}{n!}\langle \omega^{\otimes k},P_{kn} v^{\otimes n}\rangle=\frac1{k!}\bigg(\sum_{n=1}^\infty t^n\langle \omega, B_nv^{\otimes n}\rangle \bigg)^k,\quad k\in\mathbb N.$$
Therefore, for each $k\in\mathbb N$, we get the following equality of formal tensor power series from $\mathcal F(V,\mathbb F)$:
\begin{equation*}\sum_{n=k}^\infty\langle \omega^{\otimes k},\frac {k!}{n!}\,P_{kn}v^{\otimes n}\rangle = \bigg(\sum_{n=1}^\infty\langle \omega,B_nv^{\otimes n} \rangle\bigg)^k=\langle \omega, B(v)\rangle^k=\langle\omega^{\otimes k},B(v)^{\otimes k}\rangle. \end{equation*}
By Lemma~\ref{equation7867900}, this implies (B5).  
 
 {\it Proof of\/} (B5)$\Rightarrow$(B4). Straightforward: one just needs to reverse the arguments of the proof of (B4)$\Rightarrow$(B5). Thus, the theorem is proven.
\end{proof}

  \begin{corollary}\label{crts65sw6ue6u} (i) For each $B(v)\in\mathcal F_1(V)$,  there exists a binomial sequence $(P_n(\omega))_{n=0}^\infty$ with generating function~\eqref{trse5sw5w}.
  
  (ii) A linear operator $Q\in\mathcal L\big(\mathcal P(V^*),\mathcal P(V^*,V)\big)$
is the lowering differential    for a binomial sequence $(P_n(\omega))_{n=0}^\infty$
if and only if there exists $L(v)\in\mathcal F_1(V)$ such that   $Q=L(D)$.  
  \end{corollary}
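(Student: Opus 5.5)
For part (i), I would construct the binomial sequence directly from $B(v)$ through condition (B5) of Theorem~\ref{45678765g}. Write $B(v)^{\otimes k}=\sum_{n=k}^\infty C^{(k)}_n v^{\otimes n}$ with $C^{(k)}_n\in\mathcal L(V^{\odot n},V^{\odot k})$ given by \eqref{frsters5tsw5u}. I then define
\[
P_{kn}:=\frac{n!}{k!}\,C^{(k)}_n \quad (1\le k\le n),
\]
together with $P_{00}:=1$ and $P_{0n}:=0$ for $n\ge1$.

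To land in $\mathbb P(V)$, each $P_{nn}$ must be bijective. From \eqref{frsters5tsw5u}, the only term with $i_1+\dots+i_n=n$ and all $i_j\ge1$ is $i_1=\dots=i_n=1$. Hence $C^{(n)}_n=B_1^{\odot n}=B_1\otimes\dots\otimes B_1\restriction_{V^{\odot n}}$, so $P_{nn}=B_1^{\odot n}$. This operator is invertible with inverse $(B_1^{-1})^{\odot n}$, since $B_1$ is invertible and both operators preserve $V^{\odot n}$. This is the only point that needs any care.

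With this choice, $P_0=1$ and $P_n(0)=0$ for $n\ge1$, because $P_{0n}=0$. Condition (B5) holds by construction, so Theorem~\ref{45678765g} yields (B1) and (B4). In other words, $(P_n)$ is binomial with generating function \eqref{trse5sw5w}.

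For part (ii), the "only if" direction is (B1)$\Rightarrow$(B3) of Theorem~\ref{45678765g}. Lemma~\ref{gfyrdr6ew64ui} supplies the hypothesis $P_n(0)=0$ needed to invoke the theorem.

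For the "if" direction, I take $L(v)\in\mathcal F_1(V)$ and set $B(v):=L^{\langle-1\rangle}(v)\in\mathcal F_1(V)$, using the group structure of $\mathcal F_1(V)$. Part (i) then gives a binomial sequence with generating function $\exp[\langle\omega,B(v)\rangle]$. By Theorem~\ref{45678765g}, its lowering differential equals $\tilde L(D)$ with $\tilde L=B^{\langle-1\rangle}=L$, which is the given $Q=L(D)$.

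The main obstacle is checking the invertibility of $P_{nn}$ in part (i); everything else is bookkeeping through the already proven equivalences.
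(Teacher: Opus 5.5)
Your proof is correct and takes essentially the same route as the paper. For (i) you apply Theorem~\ref{45678765g} to the sequence read off from $\exp[\langle\omega,B(v)\rangle]$; you go through (B5) where the paper goes through (B4), which is the same coefficient extraction. For (ii) you set $B:=L^{\langle-1\rangle}$ and invoke (B3), with Lemma~\ref{gfyrdr6ew64ui} for the converse, as the paper does. Your explicit check that $P_{nn}=B_1^{\odot n}$ is invertible fills in a point the paper leaves implicit when it says the generating function ``determines a polynomial sequence''.
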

  
  \begin{proof} (i) Formula~\eqref{trse5sw5w} determines a polynomial sequence $(P_n(\omega))_{n=0}^\infty$. Setting $\omega=0$ in formula~\eqref{trse5sw5w}, we get $\sum_{n=0}^\infty\frac1{n!}\langle P_n(0),v^{\otimes n}\rangle=1$. Hence, $P_0=1$ and $P_n(0)=0$ for $n\in\mathbb N$. Therefore, by Theorem~\ref{45678765g} (B4), the polynomial sequence $(P_n(\omega))_{n=0}^\infty$ is binomial. 
  
 (ii)  Let $L(v)\in\mathcal F_1(V)$ and define $B(v):=L^{\langle-1\rangle}(v)\in\mathcal F_1(V)$. By part~(i), there exists a binomial sequence $(P_n(\omega))_{n=0}^\infty$ with generating function~\eqref{trse5sw5w}. 
By Theorem~\ref{45678765g} (B3), the  lowering differential  for this binomial sequence has the form  $Q=L(D)$. The converse statement is obvious.  
  \end{proof}

Recall that a set partition $\pi=\{\mathcal A_1,\dots, \mathcal A_k\}$ of a finite set $\mathcal X\neq \varnothing$ is an unordered collection of disjoint nonempty subsets (parts) of $\mathcal X$ whose union is $\mathcal X$. We will denote by $\mathcal P(\mathcal X)$ the collection of all set partitions of $\mathcal X$.  For a set partition $\pi\in\mathcal P(\mathcal X)$, we denote by $|\pi|$ the number of sets in $\pi$. For a set $\mathcal A\in\pi$, we denote by $|\mathcal A|$, the number of elements of the set $\mathcal A$.  In the case $\mathcal X=\{1,2,\dots,n\}$, we denote $\mathcal P(n):=\mathcal P(\mathcal X)$.

The following corollary extends \cite[Proposition 5.1]{FKLO}. 

\begin{corollary}\label{cxtsxtstst} Let  $(P_n(\omega))_{n=0}^\infty$ be a binomial sequence with generating function \eqref{trse5sw5w}, with $B(v)=\sum_{k=1}^\infty\frac1{k!}\,\tilde B_kv^{\otimes k}$.  Then, for any $\omega\in V^*$,  $n\in\mathbb N$, and $v_1,\dots,v_n\in V$, we have
\begin{equation}\label{fgy653d}
\langle P_n(\omega),v_1\odot\dots\odot v_n\rangle=
\sum_{\pi\in\mathcal P(n)}\prod_{\mathcal A\in\pi}\Big\langle\omega,\tilde B_{|\mathcal A|}\Big(\underset{i\in \mathcal A}\odot v_i\Big)\Big\rangle.\end{equation}
In particular, for $\omega\in V^*$, $v\in V$, and $n\in\mathbb N$,
\begin{equation}\label{rdtrs6u}
\langle P_n(\omega),v^{\otimes n}\rangle=
\sum_{\pi\in\mathcal P(n)}\prod_{\mathcal A\in\pi}\langle\omega,\tilde B_{|\mathcal A|}v^{\otimes|\mathcal A|}\rangle.\end{equation}
  \end{corollary}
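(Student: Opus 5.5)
The plan is to prove \eqref{rdtrs6u} first, by expanding the exponential generating function, and then obtain \eqref{fgy653d} by polarization. Fix $\omega\in V^*$ and $v\in V$. By Theorem~\ref{45678765g} (B4), as an identity of formal power series in $t\in\mathbb F$,
\[
\sum_{n=0}^\infty\frac{t^n}{n!}\langle P_n(\omega),v^{\otimes n}\rangle=\exp\Big[\sum_{k=1}^\infty\frac{t^k}{k!}\,b_k\Big],\qquad b_k:=\langle\omega,\tilde B_kv^{\otimes k}\rangle.
\]
This is the setting of the classical exponential formula (Fa\`a di Bruno / Bell polynomials). The coefficient of $t^n/n!$ in $\exp[\sum_k b_kt^k/k!]$ is $\sum_{\pi\in\mathcal P(n)}\prod_{\mathcal A\in\pi}b_{|\mathcal A|}$. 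To see this, expand $\frac1{m!}\big(\sum_k b_kt^k/k!\big)^m$. The term with ordered block sizes $k_1,\dots,k_m$ contributes $\frac{n!}{m!\,k_1!\cdots k_m!}\prod_i b_{k_i}$ to the coefficient of $t^n/n!$. Here $\frac{n!}{k_1!\cdots k_m!}$ counts ordered partitions of $\{1,\dots,n\}$ into blocks of those sizes, and dividing by $m!$ forgets the order. Comparing coefficients of $t^n$ then gives \eqref{rdtrs6u}.

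For \eqref{fgy653d}, both sides are symmetric multilinear in $(v_1,\dots,v_n)$. The left-hand side is clearly so. On the right-hand side, each summand is multilinear, and the sum over all partitions is invariant under permuting the labels, because a permutation $\sigma$ maps $\mathcal P(n)$ bijectively to itself. Moreover, $\underset{i\in\mathcal A}\odot v_i$ depends only on the set $\mathcal A$, by \eqref{xeras4waq53y}. A symmetric multilinear form is determined by its diagonal values via the polarization formula. Both sides agree on the diagonal $v_1=\dots=v_n=v$: when all $v_i=v$, each $\underset{i\in\mathcal A}\odot v_i=v^{\otimes|\mathcal A|}$, which is exactly \eqref{rdtrs6u}. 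Hence they agree everywhere.

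The only real care point is the diagonal-to-general step. Polarization expresses $v_1\odot\dots\odot v_n$ as a combination of $(\sum k_iv_i)^{\otimes n}$. I need to apply the same polarization identity to the right-hand side, viewed as the symmetric multilinear form $F(v_1,\dots,v_n)$ whose diagonal is $F(v,\dots,v)$; the standard identity $F(v_1,\dots,v_n)=\frac1{2^nn!}\sum_{k}k_1\cdots k_n F(u,\dots,u)$ with $u=\sum k_iv_i$ holds for any symmetric multilinear $F$. I expect no obstacle beyond checking this symmetry carefully. Alternatively, one could prove \eqref{fgy653d} directly by replacing $v$ with $\sum_i s_iv_i$ in \eqref{rdtrs6u} and extracting the coefficient of $s_1\cdots s_n$. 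Either route works; I would use polarization.
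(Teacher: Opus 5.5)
Your proof is correct and is essentially the paper's argument. The paper runs the same exponential-formula count (compositions $\to$ multiplicities $\to$ set partitions) at the operator level via (B5), obtaining $P_{kn}=\sum_{\pi\in\mathcal P(n,k)}\tilde B_{|\mathcal A_1|}\odot\dots\odot\tilde B_{|\mathcal A_k|}$ and then reading off \eqref{rdtrs6u}, whereas you first pair with $\omega$ and $v$ and apply the scalar version to (B4); your explicit symmetric-multilinear polarization argument for passing from \eqref{rdtrs6u} to \eqref{fgy653d} fills in a step the paper only asserts.
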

  
  \begin{proof}It follows from Theorem~\ref{45678765g} (B5) that
  \begin{align}P_{kn}&=\frac{n!}{k!}\sum_{l_1,\dots,l_k\geq 1\atop l_1+\cdots+l_k=n}\frac{1}{l_1!\cdots l_k!}\,\tilde B_{l_1}\odot\tilde B_{l_2}\odot\dots \odot\tilde B_{l_k}\notag\\
&=\sum_{\substack{(j_1,j_2,\dots,j_n)\in \mathbb {N}_0^n\\ j_1+j_2+\cdots+j_n=k,\\ j_1+2j_2+\cdots+nj_n=n}}\frac{n!}{j_1!j_2!\cdots j_n!(1!)^{j_1}(2!)^{j_2}\cdots(n!)^{j_n}}\tilde B_1^{\odot j_1}\odot\tilde B_2^{\odot j_2}\odot\dots \odot\tilde B_n^{\odot j_n}\notag\\
&=\sum_{\pi=\{\mathcal A_1,\dots,\mathcal A_k\}\in\mathcal P(n,k)}\tilde B_{|\mathcal A_1|}\odot\tilde B_{|\mathcal A_2|}\odot\cdots \odot \tilde B_{|\mathcal A_k|},\label{cfxtxstz}\end{align}
where $\mathcal P(n,k)$ denotes the collection of all set partitions  from $\mathcal P(n)$
 that have exactly $k$ parts. 
In view of \eqref{xreaw4a4y}--\eqref{gftdtrst5}, formula \eqref{cfxtxstz} implies \eqref{rdtrs6u}. The latter formula yields, in turn, \eqref{fgy653d}.  
  \end{proof}

 \section{Sheffer sequences}\label{cxzsrarewtytyrf}

 Let $S\in\mathbb P(V)$. We say that $S$ is a {\it Sheffer operator} and the corresponding polynomial sequence $(S_n(\omega))_{n=0}^\infty$ is a {\it Sheffer sequence} if $(S_n(\omega))_{n=0}^\infty$ has  the generating function of the form  
\begin{equation}\sum_{n=0}^\infty \frac1{n!}\langle S_n(\omega), v^{\otimes n}\rangle=\exp [\langle \omega, B(v)\rangle]A(v),\label{ldj84n}\end{equation} 
where $B(v)\in \mathcal F_1(V)$ and $A(v)\in \mathcal F_0(V)$. We denote by $\mathbb S(V)$ the set of Sheffer operators.

It follows from the definition of a Sheffer operator and Corollary~\ref{crts65sw6ue6u} (i)  that, to each Sheffer operator $S$, there corresponds a unique umbral operator $P$ whose 
binomial sequence $(P_n(\omega))_{n=0}^\infty$ has generating function \eqref{trse5sw5w}.

The following theorem gives  equivalent characterizatios of a Sheffer sequence.

\begin{theorem}\label{ctrst5u}
Let $P\in \mathbb B(V)$  and let $(P_n(\omega))_{n=0}^\infty$ be the corresponding binomial  sequence that has generating function \eqref{trse5sw5w}. Let $Q$ be the corresponding lowering differential.
Let $S=[S_{kn}]_{k,n\in\mathbb N_0}\in\mathbb P(V)$ and let $(S_n(\omega))_{n=0}^\infty$ be the corresponding polynomial  sequence.  The following conditions are equivalent:

{\rm (S1)} There exists  $A(v)\in \mathcal F_0(V)$ such that $(S_n(\omega))_{n=0}^\infty$ has generating function~\eqref{ldj84n}, hence $(S_n(\omega))_{n=0}^\infty$ is a Sheffer sequence.  

{\rm (S2)} Let a linear operator $T\in\mathcal L(\mathcal P(V^*))$ be defined by $T:=PS^{-1}$,
i.e.,  for each $n\in\mathbb N_0$ and $f_n\in V^{\odot n}$,
\begin{equation}T\langle S_n(\cdot),f_n\rangle=\langle P_n(\cdot),f_n\rangle.\label{2b3dh}\end{equation}
Then $T$ is shift-invariant.

{\rm (S3)} The $Q$ is the lowering differential  for the polynomial sequence $(S_n(\omega))_{n=0}^\infty$.

{\rm (S4)} For all $\omega,\theta \in V^*$  and $n\in \mathbb N$,
\begin{equation}\label{fgxtgxt}  S_n(\omega+\theta)=\sum_{k=0}^n {n \choose k}S_k(\omega)\odot P_{n-k}(\theta) .\end{equation}

{\rm (S5)} There exists $\rho(v)=\sum_{k=0}^\infty \langle \rho_k, v^{\otimes k}\rangle\in \mathcal F_0(V)$ such that, for all $\omega \in V^*$  and $n \in \mathbb N$,
\begin{equation}\label{guf6rd65s5}S_n(\omega) =\sum_{k=0}^n{n \choose k}
\rho_k\odot P_{n-k}(\omega).\end{equation}

{\rm (S6)} There exists $A(v)\in \mathcal F_0(V)$ such that, for each $k\in \mathbb N_0$, 
\begin{equation}\label{fgh561560}\sum_{n=k}^\infty \frac{k!}{n!}\,S_{kn}v^{\otimes n}= B(v)^{\otimes k}A(v).\end{equation}
Formula \eqref{fgh561560} is understood as an equality of formal tensor power series from $\mathcal F(V,V^{\odot k})$.

Furthermore, the formal tensor power series $A(v)=\sum_{k=0}^\infty\langle  A_k, v^{\otimes k}\rangle\in \mathcal F_0(V)$ in~{\rm (S1)} and~{\rm (S6)} are the same, the $\rho(v)$ in {\rm (S5)} has the form $\rho(v)=\sum_{k=0}^\infty \langle k!\, A_k, v^{\otimes k}\rangle$ (i.e., $\rho_k=k!\,A_k$), and the operator $T$ in {\rm (S2)} is given by 
$T=\tau(D)$, where $\tau(v):=A^{-1}(B^{\langle -1\rangle}(v))=A^{-1}(L(v))\in\mathcal F_0(V)$.
\end{theorem}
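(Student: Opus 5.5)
The plan is to run a cycle of implications that mirrors the proof of Theorem~\ref{45678765g}, with (S1) at the centre. The equivalences (S1)$\Leftrightarrow$(S6) and (S1)$\Leftrightarrow$(S5) are handled by generating functions. The equivalences with (S2), (S3), (S4) go through the operator expansion (Theorem~\ref{bgnfm758}) and the isomorphism $\mathcal I$ of Corollary~\ref{567ty7}.

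\emph{(S1)$\Leftrightarrow$(S6).} Replace $\omega$ by $s\omega$ in \eqref{ldj84n} and expand $\exp[s\langle\omega,B(v)\rangle]A(v)=\sum_k \frac{s^k}{k!}\langle\omega^{\otimes k},B(v)^{\otimes k}A(v)\rangle$. On the left-hand side, use \eqref{gftdtrst5} exactly as in the proof of (B4)$\Rightarrow$(B5), including the $k=0$ term, which now contributes $S_{0n}$. Then equate the coefficients of $s^k$ and apply Lemma~\ref{equation7867900}. The argument reverses.

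\emph{(S1)$\Leftrightarrow$(S5).} Write $\exp[\langle\omega,B(v)\rangle]A(v)$ as the product of the generating function of $(P_n)$ and $\rho(v):=\sum_k\frac1{k!}\langle\rho_k,v^{\otimes k}\rangle$, with $\rho_k=k!A_k$. Multiplying formal tensor power series via \eqref{vcytdy} gives exactly \eqref{guf6rd65s5}. Conversely, \eqref{guf6rd65s5} gives the product form. The condition $A(0)\neq0$ corresponds to $\rho_0\neq0$.

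\emph{(S1)$\Leftrightarrow$(S4).} Assume (S1). Then
\[
\exp[\langle\omega+\theta,B(v)\rangle]A(v)=\big(\exp[\langle\omega,B(v)\rangle]A(v)\big)\exp[\langle\theta,B(v)\rangle].
\]
Comparing coefficients, as in (B4)$\Rightarrow$(B1), gives \eqref{fgxtgxt}. Conversely, set $\omega=0$ in \eqref{fgxtgxt}. This yields (S5) with $\rho_k=S_k(0)$, and $\rho_0=S_0\neq0$ because $S_{00}$ is invertible.

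\emph{(S4)$\Rightarrow$(S3).} Let $\tilde Q(\eta)=SD(\eta)S^{-1}$ be the lowering operators of $S$. Use Taylor's formula (Lemma~\ref{fghj9876b}) in the form $\langle S_n(\omega+t\eta),f\rangle$. Since \eqref{fgxtgxt} holds with $\theta=t\eta$ and $P_j(t\eta)=t^jP_{j1}^{*}$-type terms, differentiating at $t=0$ gives
\[
D(\eta)\langle S_n(\cdot),v^{\otimes n}\rangle=n\langle S_{n-1}(\cdot),v^{\otimes(n-1)}\rangle\langle P_1(\eta),v\rangle.
\]
Here only $k=n-1$ contributes, because $P_j(t\eta)$ is $O(t^j)$ and $P_j(0)=0$ by Lemma~\ref{gfyrdr6ew64ui}. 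This formula says that $D(\eta)$ acts on $S$ the way $Q(\theta)$ does, but with $\theta=P_{11}^*\eta$. Since $Q$ has the same effect on $P$ and $D=P_{11}^{*}$-rescaled $Q$ (as shown in (B2)$\Rightarrow$(B3), $L_1=P_{11}^{-1}$), I would rather argue more directly. Differentiate \eqref{fgxtgxt} in $\theta$ via the operator $Q(\eta)$ acting on the $\theta$-variable. Apply $Q(\eta)$ to $\omega\mapsto \langle S_n(\omega),v^{\otimes n}\rangle$, using $Q(\eta)=\sum_k\langle\eta L_k,D^k\rangle$ from (B3). The $D^k$ derivatives of $\theta\mapsto S_n(\omega+\theta)$ at $\theta=0$ are read off from \eqref{fgxtgxt}. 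Then $D^k$ applied to $P_{n-j}(\theta)$ at $0$ combines, through the identity $\sum_k\langle\eta L_k,(D^kP_m)(0)\rangle=(Q(\eta)P_m)(0)=m\,\delta_{m1}\eta$, to give $Q(\eta)S_n=nS_{n-1}\odot\eta$. This is (S3).

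\emph{(S3)$\Rightarrow$(S2).} By (S3), $SD(\eta)S^{-1}=Q(\eta)=PD(\eta)P^{-1}$. Hence $T=PS^{-1}$ commutes with every $Q(\eta)$. I then need commutation with $E(\theta)$. Apply Theorem~\ref{bgnfm758} to $E(\theta)$ with respect to $P$; this expresses $E(\theta)$ as a series in the $\langle\cdot,Q^k\rangle$, a series that is locally finite on each polynomial. Since $T$ commutes with each $Q(\eta)$, and the $Q^k$ are built from the $Q(\eta)$, $T$ commutes with $E(\theta)$.

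\emph{(S2)$\Rightarrow$(S1).} Write $T=\tau(D)$ by Corollary~\ref{567ty7}, with $\tau(0)=T1\neq0$ since $T$ is invertible. Then $S^{-1}=P^{-1}T$. Corollary~\ref{567ty7}(ii) gives that $T^{-1}$ is shift-invariant, so that $\langle S_n(\cdot),f\rangle=T^{-1}\langle P_n(\cdot),f\rangle$. Expanding $E(\omega)$ as in Step 3 of (B3)$\Rightarrow$(B4), the generating function of $S$ at $v$ equals $(\mathcal I\,T^{-1})(B(v))\cdot\exp[\langle\omega,B(v)\rangle]$. The key computation here is \eqref{ctxsts}: $\mathcal I$ of $\langle T_k,Q^k\rangle$ is $\langle T_k,L(v)^{\otimes k}\rangle$, so expressing $T^{-1}$ in powers of $Q$ amounts to composing with $L$. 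This gives $A(v)=\tau^{-1}(B(v))$, i.e.\ $\tau=A^{-1}\circ L$, which is the stated formula.

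The main obstacle I expect is exactly this bookkeeping in (S2)$\Rightarrow$(S1), namely the translation between the expansions in $D$ and in $Q$ through $L$ and $B=L^{\langle-1\rangle}$. The derivative-transfer step in (S4)$\Rightarrow$(S3) also needs care.
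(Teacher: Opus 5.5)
Your proof is correct in substance, but it is organized around (S1), whereas the paper organizes it around (S2)/(S3).

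\textbf{How the paper argues.} It proves (S2)$\Leftrightarrow$(S3) and (S2)$\Rightarrow$(S1) with the operator expansion, and it gets (S2)$\Rightarrow$(S4)$\Rightarrow$(S5)$\Rightarrow$(S3). For the hard direction (S1)$\Rightarrow$(S2), it constructs $R:=\mathcal I^{-1}\big(A(L(v))\big)$. It then shows, by expanding $E(\omega)R$, that $RP$ has generating function $\exp[\langle\omega,B(v)\rangle]A(v)$. From this it concludes $S=RP$, i.e.\ $T=R^{-1}$.

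\textbf{How you argue.} You obtain (S4), (S5), (S6) from (S1) by generating-function algebra alone: the exponential law $\exp[\langle\omega+\theta,B(v)\rangle]=\exp[\langle\omega,B(v)\rangle]\exp[\langle\theta,B(v)\rangle]$, together with splitting off the factor $A(v)$. You close the cycle with (S4)$\Rightarrow$(S3). This uses shift-invariance of $Q(\eta)$ in the form $(Q(\eta)p)(\omega)=(Q(\eta)E(\omega)p)(0)$, together with $(Q(\eta)\langle P_m(\cdot),v^{\otimes m}\rangle)(0)=\delta_{m1}\langle\eta,v\rangle$. Your (S3)$\Rightarrow$(S2) and (S2)$\Rightarrow$(S1) are essentially the paper's.

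\textbf{What each route buys.} Your route makes the implications out of (S1) elementary and gives $\rho_k=k!\,A_k$ at once. It avoids the construction of $R$ and the uniqueness step $S=RP$. The paper's route stays inside the operator calculus and exhibits $T^{-1}=\mathcal I^{-1}(A\circ L)$ explicitly. In both, the formula $T=\tau(D)$ is read off from the (S2)$\Rightarrow$(S1) computation, using that $A$ is determined by $S$.

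Three points need repair.

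\emph{First}, discard your initial attempt in (S4)$\Rightarrow$(S3). $P_j(t\eta)=\sum_{i=1}^j t^iP_{ij}^*\eta^{\otimes i}$ is $O(t)$, not $O(t^j)$, because $P_{1j}\neq0$ in general. So the displayed formula $D(\eta)\langle S_n(\cdot),v^{\otimes n}\rangle=n\langle S_{n-1}(\cdot),v^{\otimes(n-1)}\rangle\langle P_1(\eta),v\rangle$ is false. For example, for the falling factorials over $\mathbb F$, $\frac{d}{dx}(x^2-x)=2x-1\neq 2x$. The second argument you give is the correct one.

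\emph{Second}, in (S3)$\Rightarrow$(S2), the claim that ``the $Q^k$ are built from the $Q(\eta)$'' is not literally true when $\dim V=\infty$. $P_k(\theta)$ need not lie in $(V^*)^{\odot k}$, so $\langle P_k(\theta),Q^k\rangle$ is not a finite combination of products $Q(\theta_1)\cdots Q(\theta_k)$. You need one more step, and either of the following works:
\begin{itemize}
\item Verify $T\langle R_k,Q^k\rangle=\langle R_k,Q^k\rangle T$ directly on the polynomials $\langle S_n(\cdot),v^{\otimes n}\rangle$ using (S3), as the paper does.
\item Deduce $Q^kT=(T\otimes\mathbf 1)Q^k$ from $TQ(\theta_1)\cdots Q(\theta_k)=Q(\theta_1)\cdots Q(\theta_k)T$. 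To do this, pair with $\theta^{\otimes k}$ and use that these functionals separate the points of $V^{\odot k}$ (the Claim in the proof of Lemma~\ref{equation7867900}). Then apply $R_k$ in the $V^{\odot k}$ factor.
\end{itemize}

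\emph{Third}, your (S5)$\Rightarrow$(S1), like the paper's (S5)$\Rightarrow$(S3), uses \eqref{guf6rd65s5} also for $n=0$, i.e.\ $S_0=\rho_0$. With $n\in\mathbb N$ only, the implication fails: take $S_0=2$, $S_n=P_n$ for $n\ge1$, and $\rho(v)=1$.
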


\begin{proof} We start with 

{\it Proof of\/} (S3)$\Rightarrow$(S2). By (S3) and formula \eqref{csesea43}, we have, for $R_k\in (V^{\odot k})^*$ and $v\in V$, 
\begin{align}
&T\langle R_k,Q^{k}\rangle \langle S_n(\cdot),v^{\otimes n}\rangle=
(n)_k\langle R_k,v^{\otimes k}\rangle T\langle S_{n-k}(\cdot),v^{\otimes (n-k)}\rangle\notag\\
&\quad=(n)_k\langle R_k,v^{\otimes k}\rangle \langle P_{n-k}(\cdot),v^{\otimes (n-k)}\rangle\notag\\
&\quad =\langle R_k,Q^{k}\rangle\langle P_n(\cdot),v^{\otimes n}\rangle=
\langle R_k,Q^{k}\rangle T\langle S_n(\cdot),v^{\otimes n}\rangle.\notag
\end{align} 
Hence, by Lemma~\ref{vcfcfydy} (i), the operators $T$ and $\langle R_k,Q^{k}\rangle$ commute. By Theorem~\ref{bgnfm758}, the operator $T$ commutes with any shift-invariant operator. In particular, $T$ commutes with any shift operator $E(\theta)$ ($\theta\in V^*$), i.e., $T\in \mathcal{SI}(\mathcal P(V^*))$. 

{\it Proof of\/} (S2)$\Rightarrow$(S3). By (S2),
$$T1=TS^{-1}_{00}S_{00}=S^{-1}_{00}TS_{00}=S^{-1}_{00}P_{00}=S^{-1}_{00}\neq 0.$$ Hence, by Corollary \ref{567ty7} (ii), the inverse operator $T^{-1}$ exists and is shift-invariant. By~Theorem~\ref{45678765g} (B2), for each $\theta \in V^*$, $Q(\theta)$ is shift-invariant. By Corollary \ref{567ty7} (i), the operators $T^{-1}$ and $Q(\theta)$ commute. Hence, for all $\theta \in V^*$, $v\in V$ and $n\in \mathbb N$,
\begin{align}
&Q(\theta)\langle S_n(\cdot),v^{\otimes n}\rangle=Q(\theta)T^{-1}\langle P_n(\cdot),v^{\otimes n}\rangle=T^{-1}Q(\theta)\langle P_n(\cdot),v^{\otimes n}\rangle\notag\\
&\quad =T^{-1}n\langle \theta,v \rangle \langle P_{n-1}(\cdot),v^{\otimes (n-1)}\rangle 
=n\langle \theta,v \rangle T^{-1} \langle P_{n-1}(\cdot),v^{\otimes (n-1)}\rangle \notag\\
&\quad =n\langle \theta,v \rangle \langle S_{n-1}(\cdot),v^{\otimes (n-1)}\rangle, \notag 
\end{align}
which implies (S3).

{\it Proof of \/} (S2)$\Rightarrow$(S1). Below we will again use the isomorphism  $\mathcal I:\mathcal{SI}(\mathcal P  (V^*))\to \mathcal F(V;\mathbb F)$ from Corollary~\ref{567ty7}~(i) with $Q=D$.

We already proved that, under assumption (S2), the operator $T$ is invertible and $T^{-1}$ is shift-invariant.  Let $\omega \in V^*$. Applying Theorem~\ref{bgnfm758}   to the shift-invariant operator $E(\omega)T^{-1}$,  we have
$E(\omega)T^{-1}=R_0\mathbf 1+\sum_{k=1}^\infty \langle R_k,Q^{k}\rangle$,
where $R_0=E(\omega)T^{-1}1=E(\omega)S_{00}=S_{00}$ and for $k\in \mathbb N$ and $f_k\in V^{\odot k}$,
\begin{align}
\langle R_k,f_k\rangle &=\frac1{k!}\big(E(\omega)T^{-1}\langle P_k(\cdot),f_k\rangle\big)(0)\notag\\
&=\frac1{k!}\big(E(\omega) \langle S_k(\cdot),f_k\rangle\big)(0)=\frac1{k!}\,\langle S_k(\omega),f_k\rangle,\notag
\end{align}
hence $R_k=\frac1{k!}\, S_k(\omega)$. Thus,
\begin{equation}\label{7777777k}E(\omega)
T^{-1}=S_{00}\mathbf 1+\sum_{k=1}^\infty \frac1{k!}\,\langle S_k(\omega),Q^{k}\rangle.\end{equation}

 By formula~\eqref{ctxsts}, 
\begin{equation}\label{dsrsa5uewu6}
\big(\mathcal I\langle S_k(\omega),Q^{k}\rangle\big)(v)=\langle S_k(\omega),L(v)^{\otimes k}\rangle.\end{equation}
Therefore, by \eqref{7777777k},
\begin{equation}\label{rfgh67ffd}
\big(\mathcal I E(\omega)T^{-1}\big)(v)=
S_{00}+\sum_{k=1}^\infty\frac1{k!}\langle S_k(\omega),L(v)^{\otimes k}\rangle=\sum_{k=0}^\infty\frac1{k!}\langle S_k(\omega),L(v)^{\otimes k}\rangle.
\end{equation}

We define $\tau(v)\in\mathcal F_0(V)$ by $\tau(v):=(\mathcal IT)(v)$. By Corollary~\ref{567ty7} (i) and formula \eqref{rfgh67ffd}, we have
\begin{align}
(\mathcal IE(\omega))(v)&=\big(\mathcal IE(\omega)T^{-1}T\big)(v)=\big(\mathcal IE(\omega)T^{-1}\big)(v)(\mathcal IT)(v)\notag\\
&=\bigg(\sum_{k=0}^\infty\frac1{k!}\langle S_k(\omega),L(v)^{\otimes k}\rangle\bigg)\tau(v). \label{fxdsxdesese}
\end{align}
 By \eqref{fgtxtdstrs} and \eqref{fxdsxdesese},
 \begin{equation}\label{vcfxtdstrest}
 \sum_{k=0}^\infty\frac1{k!}\langle S_k(\omega),L(v)^{\otimes k}\rangle=\exp[\langle\omega,v\rangle]\,\tau^{-1}(v).
 \end{equation} 
Recall that $L(v)=B^{\langle -1\rangle}(v)$. Hence, formula \eqref{vcfxtdstrest} implies 
\begin{equation}\label{fdxzrdzszsw}
\sum_{k=0}^\infty\frac1{k!}\langle S_k(\omega),v^{\otimes k}\rangle=\exp[\langle\omega,B(v)\rangle]\,\tau^{-1}(B(v))=\exp[\langle\omega,B(v)\rangle]A(v),\end{equation}
where 
\begin{equation}\label{vfsrar45w5y}A(v):=\tau^{-1}(B(v))\in\mathcal F_0(V).
\end{equation} 

{\it Proof of \/} (S1)$\Rightarrow$(S2). We define $r(v)\in \mathcal F(V;\mathbb F)$ by 
\begin{equation}\label{fcfctyxzrray}
r(v):=A(L(v))=A(B^{\langle -1\rangle}(v)).\end{equation}
  Since $r(0)=A(0)\ne0$, we conclude that $r(v)\in\mathcal F_0(V)$. We define $R\in \mathcal{SI}(\mathcal P(V^*))$ by $R:=\mathcal I^{-1}(r(v))$. By construction, the block-matrix $[R_{kn}]_{k,n\in\mathbb N_0}$ of the operator $R$ is infinite upper triangular, and for each $n\in\mathbb N_0$, $R_{nn}=r(0)\mathbf 1_n$. Hence, $R_{nn}$ is invertible, and so $R\in\mathbb P(V)$.

Next, we define $\tilde {S}=[\tilde {S}_{kn}] _{k,n\in\mathbb N_0}\in \mathcal L (\mathcal P(V^*))$ by $\tilde{S} := RP$. By Lemma~\ref{vcfcfydy} (ii), $\tilde S\in\mathbb P(V)$.
 Let $(\tilde S_n(\omega))_{n=0}^\infty$ be the corresponding polynomial sequence. 

 Applying Theorem \ref{bgnfm758} to the shift-invariant operator $E(\omega)R$, we have 
$E(\omega)R=H_0\mathbf 1+\sum_{k=1}^\infty \langle H_k, Q^{k}\rangle$,
where $H_0=E(\omega)R1=E(\omega)r(0)=r(0)=\tilde S_{00}$, and for $k\geq 1$,
\begin{align*}
\langle H_k,f_k\rangle&=\frac1{k!}\big(E(\omega)R\langle P_k(\cdot),f_k\rangle\big)(0)=\frac1{k!}\big(E(\omega)RP\langle\cdot^{\otimes k},f_k\rangle\big)(0)\\
& =\frac1{k!}\big(E(\omega)\tilde S\langle\cdot^{\otimes k},f_k\rangle\big)(0)=\frac1{k!}\,\langle\Tilde S_k(\omega),f_k\rangle.
\end{align*}
Thus, for $k\geq 1$, $H_k=\frac 1{k!}\,\tilde S_k(\omega)$, and so 
\begin{equation}E(\omega)R=\tilde S_{00}\mathbf 1+\sum_{k=1}^\infty \frac 1{k!}\langle \tilde S_k(\omega),
Q^{k}\rangle.\label{fghj987}\end{equation} 
Similarly to formulas \eqref{7777777k}--\eqref{fdxzrdzszsw}, we conclude from \eqref{fcfctyxzrray} and \eqref{fghj987} that 
\[ \sum_{k=0}^\infty\frac1{k!}\langle \tilde S_k(\omega),v^{\otimes k}\rangle=\exp\big[\langle \omega, B(v)\rangle\big]A(v).
\]
Hence, by (S1), $S=\tilde S=RP$. Therefore, $R=SP^{-1}$, and so $T=PS^{-1}=R^{-1}$. Hence, the operator $T$ is shift-invariant. 
 
 Thus, we have proved that the conditions (S1), (S2), and (S3) are equivalent.

{\it Proof of\/} (S2)$\Rightarrow$(S4). For $\omega, \theta \in V^*$, $v\in V$ and $n\in \mathbb N$, we have, by (S2) and Cor\-ol\-lary~\ref{567ty7}~(ii), 
\begin{align*}
\langle S_n(\omega+\theta), v^{\otimes n})\rangle&=\big(E(\theta)\langle S_n(\cdot), v^{\otimes n}\rangle)\big(\omega)=\big(E(\theta)T^{-1}\langle P_n(\cdot), v^{\otimes n}\rangle\big)(\omega)\\
& =\big(T^{-1}E(\theta)\langle P_n(\cdot), v^{\otimes n}\rangle\big)(\omega)\\
&=\sum_{k=0}^n {n \choose k}\big(T^{-1}\langle P_k(\cdot), v^{\otimes k}\rangle\big)(\omega)\langle P_{n-k}(\theta), v^{\otimes (n-k)}\rangle\\
&=\sum_{k=0}^n {n \choose k}\langle S_k(\omega), v^{\otimes k}\rangle\langle P_{n-k}(\theta), v^{\otimes (n-k)}\rangle,
\end{align*}
which implies (S4). 

{\it Proof of\/}(S4)$\Rightarrow$(S5). Setting in \eqref{fgxtgxt} $\theta=0$, we obtain \eqref{guf6rd65s5}, in which $\rho_k=S_k(0)$ ($k\in\mathbb N$).

{\it Proof of\/}(S5)$\Rightarrow$(S3). For $\theta\in V^*$, $v\in V$ and $n\in \mathbb N$, we have, by (S5),
\begin{align*}
&Q(\theta)\langle S_n(\cdot),v^{\otimes n}\rangle=\sum_{k=0}^n{n \choose k}\langle\rho_k,v^{\otimes k}\rangle\, Q(\theta) \langle P_{n-k}(\cdot),v^{\otimes(n-k)}\rangle\\
&\quad=\sum_{k=0}^{n-1}{n \choose k}\langle\rho_k,v^{\otimes k}\rangle (n-k)\langle\theta,v\rangle \langle P_{n-k-1}(\cdot),v^{\otimes(n-k-1)}\rangle\\
&\quad=n\langle\theta,v\rangle\sum_{k=0}^{n-1}{n-1 \choose k}\langle\rho_k,v^{\otimes k}\rangle  \langle P_{n-k-1}(\cdot),v^{\otimes(n-k-1)}\rangle=n\langle\theta,v\rangle \langle S_{n-1}(\cdot),v^{\otimes(n-1)}\rangle,
\end{align*}
which implies (S3). 

{\it Proof of\/}(S1)$\Leftrightarrow$(S6). The proof is similar to that of the equivalence of conditions (B4) and (B5) of Theorem~\ref{45678765g}, so we omit this proof. 
 
 We observe  that the $\rho_n$ in (S5) is of the form $\rho_n=n!\,A_n$. Indeed, setting $\omega=0$ in~\eqref{ldj84n}, we obtain
$\sum_{n=0}^\infty \frac1{n!}\langle S_n(0),v^{\otimes n}\rangle=A(v)=\sum_{n=0}^\infty \langle A_n,v^{\otimes n}\rangle$, hence $S_n(0)=n!\,A_n$ for $n\in\mathbb N_0$. On the other hand, we already deduced from \eqref{fgxtgxt} that  $S_n(0)=\rho_n$. 

Finally, the proof of (S2)$\Rightarrow$(S1), in particular, formula \eqref{vfsrar45w5y}, implies that the operator $T$ in (S2) satisfies $(\mathcal I^{-1}T)(v)=\tau(v)=A^{-1}(B^{\langle -1\rangle}(v))=A^{-1}(L(v))$.
 \end{proof}
 
 \begin{corollary} Let $S\in\mathbb S(V)$ and let $(S_n(\omega))_{n=0}^\infty$ be the corresponding Sheffer sequence. Then $S\in\mathbb B(V)$ if and only if $S_0=1$ and $S_n(0)=0$ for all $n\in\mathbb N$.
  \end{corollary}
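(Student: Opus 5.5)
The plan is to work entirely with the generating function \eqref{ldj84n}, using the characterization of binomial sequences in Theorem~\ref{45678765g}.

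\emph{Necessity.} Suppose $S\in\mathbb B(V)$. By definition of a binomial sequence, $S_0=1$. By Lemma~\ref{gfyrdr6ew64ui}, $S_n(0)=0$ for all $n\in\mathbb N$. This direction is immediate.

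\emph{Sufficiency.} Suppose $S_0=1$ and $S_n(0)=0$ for all $n\in\mathbb N$. Since $S\in\mathbb S(V)$, the generating function has the form \eqref{ldj84n} for some $B(v)\in\mathcal F_1(V)$ and $A(v)\in\mathcal F_0(V)$.

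\begin{itemize}
\item Set $\omega=0$ in \eqref{ldj84n}. This gives $\sum_{n=0}^\infty\frac1{n!}\langle S_n(0),v^{\otimes n}\rangle=A(v)$, which is exactly the observation $S_n(0)=n!\,A_n$ used at the end of the proof of Theorem~\ref{ctrst5u}.
\item The hypotheses give $A_0=S_0=1$ and $A_n=\frac1{n!}S_n(0)=0$ for $n\ge1$, so $A(v)=1$ as a formal tensor power series. Here "$S_n(0)=0$" is read as the functional $S_n(0)\in(V^{\odot n})^*$ being zero, so $A_n=0$ in $(V^{\odot n})^*$.
\item The generating function therefore reduces to $\exp[\langle\omega,B(v)\rangle]$ with $B(v)\in\mathcal F_1(V)$. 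This is condition (B4) of Theorem~\ref{45678765g}.
\item The standing hypotheses of that theorem, $S_0=1$ and $S_n(0)=0$, are exactly our assumptions. So (B4)$\Rightarrow$(B1) yields that $(S_n(\omega))_{n=0}^\infty$ is binomial, i.e.\ $S\in\mathbb B(V)$. Alternatively, one can invoke Corollary~\ref{crts65sw6ue6u}~(i) together with uniqueness of the polynomial sequence determined by its generating function.
\end{itemize}

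There is no real obstacle here. The only point needing care is the identification of coefficients when setting $\omega=0$. This follows because a formal tensor power series determines its coefficient operators uniquely, since $V^{\odot n}$ is spanned by the $v^{\otimes n}$ by \eqref{cxtuewu533}.
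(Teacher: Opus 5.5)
Your proposal is correct and follows the paper's argument: both identify $A(v)=\sum_{n=0}^\infty\frac1{n!}\langle S_n(0),v^{\otimes n}\rangle$ by setting $\omega=0$ in \eqref{ldj84n}, and both use Theorem~\ref{45678765g} to see that $S\in\mathbb B(V)$ exactly when $A(v)=1$. The only cosmetic difference is that for necessity you cite Lemma~\ref{gfyrdr6ew64ui} directly instead of passing through $A(v)=1$.
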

  
  \begin{proof}  We have $S\in \mathbb B(V)$ if and only if $A(v)=1$. But $A(v)=\sum_{n=0}^\infty \frac1{n!}\langle S_n(0),v^{\otimes n}\rangle$, which implies the corollary.
  \end{proof}

  \begin{corollary}\label{dfgh765dcfg} Let $S=[S_{kn}]_{k,n\in\mathbb N_0}\in\mathbb S(V)$ 
  and let the corresponding Sheffer sequence $(S_n(\omega))_{n=0}^\infty$ have the generating function   
  \eqref{ldj84n}. Then we have $S_{0n}=n!\,A_n$ for $n\in\mathbb N_0$, $S_{nn}=A_0B_1^{\otimes n}$ for $n\in\mathbb N$, and 
   \begin{equation*}S_{kn}=\frac{n!}{k!}\sum_{m=0}^{n-k}\sum_{{l_1,\dots,l_k \geq1}\atop l_1+\dots+l_k=n-m}(A_m\odot B_{l_1}\odot \dots \odot B_{l_k}), \quad 1\le k<n.
   \end{equation*}
\end{corollary}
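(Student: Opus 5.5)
The plan is to read off all three formulas from condition (S6) of Theorem~\ref{ctrst5u}. For each $k\in\mathbb N_0$, (S6) gives the identity in $\mathcal F(V,V^{\odot k})$
\[
\sum_{n=k}^\infty \frac{k!}{n!}\,S_{kn}v^{\otimes n}=B(v)^{\otimes k}A(v),
\]
so it suffices to expand the right-hand side and compare the coefficients of $v^{\otimes n}$. By the remark in Section~\ref{cftxsdrszresrtes}, an operator in $\mathcal L(V^{\odot n},V^{\odot k})$ is determined by its values on the vectors $v^{\otimes n}$, so this comparison is legitimate.

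\emph{The case $k=0$.} Here $B(v)^{\otimes 0}=1$, so the identity reduces to $\sum_{n}\frac1{n!}S_{0n}v^{\otimes n}=A(v)=\sum_n\langle A_n,v^{\otimes n}\rangle$. Hence $S_{0n}=n!\,A_n$. This also agrees with $S_n(0)=n!\,A_n$, which was obtained at the end of the proof of Theorem~\ref{ctrst5u}.

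\emph{The case $k\ge1$.} By \eqref{frsters5tsw5u}, the coefficient of $v^{\otimes j}$ in $B(v)^{\otimes k}$ is $\sum_{l_1+\dots+l_k=j,\ l_i\ge1}B_{l_1}\odot\dots\odot B_{l_k}$. The product of a $V^{\odot k}$-valued series with an $\mathbb F$-valued series is defined as in \eqref{vcytdy}, so the coefficient of $v^{\otimes n}$ in $B(v)^{\otimes k}A(v)$ is
\[
\sum_{m=0}^{n-k}\ \sum_{l_1,\dots,l_k\ge1,\ l_1+\dots+l_k=n-m} A_m\odot B_{l_1}\odot\dots\odot B_{l_k}.
\]
The upper limit $m\le n-k$ holds because $B(v)^{\otimes k}$ starts at degree $k$. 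Multiplying by $n!/k!$ gives the stated formula for $S_{kn}$ when $1\le k<n$.

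\emph{The case $k=n$.} Only $m=0$ and $l_1=\dots=l_n=1$ contribute, so $S_{nn}=\frac{n!}{n!}A_0\odot B_1^{\odot n}=A_0B_1^{\otimes n}$. The last equality holds because $B_1^{\odot n}$ is the restriction of $B_1^{\otimes n}$ to $V^{\odot n}$ and $A_0$ is a scalar.

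The main obstacle is notational. We must make sure that the symbol $A_m\odot B_{l_1}\odot\dots\odot B_{l_k}$, with $A_m\in(V^{\odot m})^*$, is read via \eqref{dxzdszsrerawr4}, that is, as the operator sending $v^{\otimes n}$ to $\langle A_m,v^{\otimes m}\rangle (B_{l_1}v^{\otimes l_1})\odot\dots\odot(B_{l_k}v^{\otimes l_k})$. We must also check that the product of formal series in $\mathcal F(V,\mathbb F)$ and $\mathcal F(V,V^{\odot k})$ produces exactly this coefficient. Once the definitions are unwound on the vectors $v^{\otimes n}$, both checks are immediate. Alternatively, the same formulas could be derived directly from \eqref{ldj84n} by replacing $\omega$ with $s\omega$ and comparing coefficients of $s^kt^n$, as in the proof of (B4)$\Rightarrow$(B5). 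Invoking (S6) avoids repeating that argument.
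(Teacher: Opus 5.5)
Your proposal is correct and follows the paper's own argument: the paper states only that the corollary follows from formula \eqref{fgh561560}, i.e., condition (S6) of Theorem~\ref{ctrst5u}, and you carry out the coefficient comparison that this leaves implicit. Your treatment of the cases $k=0$, $1\le k<n$ and $k=n$ is correct, including the identification $B_1^{\odot n}=B_1^{\otimes n}\restriction_{V^{\odot n}}$ and the reading of $A_m\odot B_{l_1}\odot\dots\odot B_{l_k}$ via \eqref{dxzdszsrerawr4}.
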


\begin{proof}
The corollary easily follows from formula \eqref{fgh561560}. 
\end{proof}

We will now discuss an extension of Corollary \ref{cxtsxtstst} to the case of Sheffer sequences. Our result below generalizes  \cite[Corollary 7.2]{FKLO}.

Assume $(S_n(\omega))_{n=0}^\infty$ is a Sheffer sequence with generating function \eqref{ldj84n} in which $A(0)=S_0=1$.  We define 
$$\alpha(v):=\log(A(v))=\sum_{n=1}^\infty\frac{(-1)^{n+1}}n\,(A(v)-1)^n.$$
(Note that $\alpha(0)=0$)
Then the generating function of the Sheffer sequence has the form 
\begin{equation}\label{fgh784gh74}\sum_{n=0}^\infty \frac1{n!}\langle S_n(\omega),v^{\otimes n}\rangle=\exp\big[\langle\omega,B(v)\rangle+\alpha(v)].\end{equation}

The proof of the following corollary is similar to the proof of Corollary~\ref{cxtsxtstst}, so we  omit it. 

\begin{corollary}\label{dxzdrzdszrsxddx} Let $(S_n(\omega))_{n=0}^\infty$ be a Sheffer sequence with generating function \eqref{fgh784gh74}. 
 Let $B(v)=\sum_{k=1}^\infty\frac 1{k!}\,\tilde B_kv^{\otimes k}$ and $\alpha(v)=\sum_{k=1}^\infty\frac 1{k!}\,\langle \tilde \alpha_k,v^{\otimes k}\rangle$.
Then, for any $\omega\in V^*$,  $n\in\mathbb N$, and $v_1,\dots,v_n\in V$, we have 
\begin{equation}
\langle S_n(\omega),v_1\odot\dots\odot v_n\rangle=\sum_{\pi\in\mathcal P(n)}\prod_{\mathcal A\in\pi}\Big(\big\langle\omega,\tilde B_{|\mathcal A|}\underset{i\in \mathcal A}\odot v_i\big\rangle+\big\langle \tilde\alpha_{|\mathcal A|},\underset{i\in \mathcal A}\odot v_i\big\rangle\Big).\label{cxrtstset55}
\end{equation}
In particular, for $\omega\in V^*$, $v\in V$, and $n\in\mathbb N$,
\begin{equation}
\label{dxtststs}
\langle S_n(\omega),v^{\otimes n}\rangle=\sum_{\pi\in\mathcal P(n)}\prod_{\mathcal A\in\pi}\big(\langle\omega,\tilde B_{|\mathcal A|}v^{\otimes|\mathcal A|}\rangle+\langle\tilde\alpha_{|\mathcal A|},v^{\otimes|\mathcal A|}\rangle\big).
\end{equation}
\end{corollary}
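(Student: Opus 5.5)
The plan is to follow the proof of Corollary~\ref{cxtsxtstst}, with the single change that $\langle\omega,B(v)\rangle$ is replaced throughout by the scalar series
$$\beta_\omega(v):=\langle\omega,B(v)\rangle+\alpha(v)=\sum_{k=1}^\infty\frac1{k!}\,\big\langle \tilde B_k^*\omega+\tilde\alpha_k,v^{\otimes k}\big\rangle\in\mathcal F(V,\mathbb F),$$
which satisfies $\beta_\omega(0)=0$. Here $\tilde B_k^*\omega=\omega\tilde B_k\in(V^{\odot k})^*$. For fixed $\omega$, I set $c_k:=\tilde B_k^*\omega+\tilde\alpha_k\in (V^{\odot k})^*$. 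Then \eqref{fgh784gh74} states that
$$\sum_{n=0}^\infty\frac1{n!}\langle S_n(\omega),v^{\otimes n}\rangle=\exp\big[\beta_\omega(v)\big]=1+\sum_{m=1}^\infty\frac1{m!}\bigg(\sum_{k=1}^\infty\frac1{k!}\langle c_k,v^{\otimes k}\rangle\bigg)^m .$$

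First I prove \eqref{dxtststs}. By the product rule \eqref{vcytdy}, the $m$th power equals $\sum_{k_1,\dots,k_m\ge1}\frac{1}{k_1!\cdots k_m!}\langle c_{k_1}\odot\dots\odot c_{k_m},v^{\otimes(k_1+\dots+k_m)}\rangle$. I then collect the terms of total degree $n$ and multiply by $n!$. The same regrouping as in \eqref{cfxtxstz} applies: pass from ordered compositions $(k_1,\dots,k_m)$ with weight $\frac{n!}{m!\,k_1!\cdots k_m!}$ to multiplicities $j_i$, and then to set partitions of $\{1,\dots,n\}$ with $m$ blocks. This gives
$$\langle S_n(\omega),v^{\otimes n}\rangle=\sum_{\pi\in\mathcal P(n)}\prod_{\mathcal A\in\pi}\langle c_{|\mathcal A|},v^{\otimes|\mathcal A|}\rangle .$$
Unfolding $\langle c_k,v^{\otimes k}\rangle=\langle\omega,\tilde B_kv^{\otimes k}\rangle+\langle\tilde\alpha_k,v^{\otimes k}\rangle$ yields \eqref{dxtststs}. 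The coefficient comparison is legitimate because $v^{\otimes n}$-coefficients of formal tensor power series are determined by their values on $v^{\otimes n}$, by \eqref{cxtuewu533}.

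Next I deduce \eqref{cxrtstset55} from \eqref{dxtststs} by polarization, as was done at the end of Corollary~\ref{cxtsxtstst}. Both sides of \eqref{cxrtstset55} are symmetric multilinear functions of $(v_1,\dots,v_n)$. The left side is clear. For the right side, each product over blocks, summed over all partitions, is invariant under permutations of the indices, and it is linear in each $v_i$ because each $v_i$ lies in exactly one block. On the diagonal $v_1=\dots=v_n=v$ the two sides agree by \eqref{dxtststs}. The polarization formula shows that a symmetric multilinear form is determined by its diagonal values, so the two sides agree everywhere. Concretely, one substitutes $v=\sum k_iv_i$ and averages with signs, the same computation that gives \eqref{cxtuewu533}.

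The main obstacle is the combinatorial regrouping from ordered compositions to set partitions. I would avoid it by citing the identity already established in \eqref{cfxtxstz}, with $\tilde B_l$ replaced by $c_l$. Alternatively, it follows from the standard exponential formula: the number of set partitions of $\{1,\dots,n\}$ with block sizes $k_1,\dots,k_m$, taken as an unordered list, is $\frac{n!}{m!\,k_1!\cdots k_m!}$ times the number of orderings of that list.
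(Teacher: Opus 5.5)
Your proof is correct and is essentially the argument the paper intends; the paper omits it as being similar to the proof of Corollary~\ref{cxtsxtstst}. You expand $\exp[\langle\omega,B(v)\rangle+\alpha(v)]$ with the combined coefficients $c_k=\omega\tilde B_k+\tilde\alpha_k$, regroup compositions into set partitions as in \eqref{cfxtxstz}, and then polarize. Working with scalar coefficients for fixed $\omega$, rather than with the block operators $S_{kn}$ from (S6), is the right adaptation here, because (S6) involves $A(v)$ rather than $\alpha(v)=\log A(v)$.
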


\begin{corollary}[Factorization property of Sheffer sequences]\label{stesw5twu5}
Let $(S_n(\omega))_{n=0}^\infty$ be a Sheffer sequence with generating function \eqref{fgh784gh74}.   Let $B(v)=\sum_{k=1}^\infty B_kv^{\otimes k}$ and  $\alpha(v)=\sum_{k=1}^\infty\langle  \alpha_k ,v^{\otimes k}\rangle$. Let vectors $v_1,\dots,v_K\in V$ be such that for any $l_1,\dots,l_K\in \mathbb N_0$ with $l_i,l_j\in \mathbb N$ for some $i\neq j$, we have
\begin{equation*}B_{l_1+\dots+l_K}(v_1^{\otimes l_1}\odot \cdots\odot v_K^{\otimes l_K})=0,\quad \langle 
\alpha_{l_1+\dots+l_K},v_1^{\otimes l_1}\odot \cdots\odot v_K^{\otimes l_K}\rangle=0.\end{equation*}
Then, for any $l_1,\dots,l_K\in \mathbb N_0$,
\begin{equation*}
\langle S_{l_1+\cdots+l_K}(\omega), v^{\otimes l_1}\odot \cdots\odot v^{\otimes l_K}\rangle =\langle S_{l_1}(\omega),v^{\otimes l_1}\rangle\cdots \langle S_{l_K}(\omega),v_K^{\otimes l_K}\rangle.\end{equation*}
\end{corollary}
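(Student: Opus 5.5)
The plan is to work with generating functions, using the multiplicativity of the exponential. The statement has typos (the left side should read $v_1^{\otimes l_1}\odot\cdots\odot v_K^{\otimes l_K}$), and I prove that version. I introduce auxiliary scalar variables $t_1,\dots,t_K\in\mathbb F$ and substitute $v=t_1v_1+\dots+t_Kv_K$ into \eqref{fgh784gh74}. Both sides then become formal power series in $t_1,\dots,t_K$. The left-hand side is $\sum_n\frac1{n!}\langle S_n(\omega),(t_1v_1+\dots+t_Kv_K)^{\otimes n}\rangle$. By the multinomial expansion of $(t_1v_1+\dots+t_Kv_K)^{\otimes n}$ inside $V^{\odot n}$, the coefficient of $t_1^{l_1}\dotsm t_K^{l_K}$ equals
$$\frac{1}{l_1!\dotsm l_K!}\,\langle S_{l_1+\dots+l_K}(\omega),v_1^{\otimes l_1}\odot\cdots\odot v_K^{\otimes l_K}\rangle .$$

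Next I expand the exponent. We have $\langle\omega,B(v)\rangle+\alpha(v)=\sum_{n\ge1}\big(\langle\omega,B_nv^{\otimes n}\rangle+\langle\alpha_n,v^{\otimes n}\rangle\big)$, and with $v=\sum_it_iv_i$ the $n$th term is a sum over $l_1+\dots+l_K=n$ of $\binom{n}{l_1,\dots,l_K}t_1^{l_1}\dotsm t_K^{l_K}$ times $\langle\omega,B_n(v_1^{\otimes l_1}\odot\cdots\odot v_K^{\otimes l_K})\rangle+\langle\alpha_n,v_1^{\otimes l_1}\odot\cdots\odot v_K^{\otimes l_K}\rangle$. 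The hypothesis kills every mixed term, i.e.\ every term with at least two nonzero $l_i$. Only the pure terms $l_i=n$ survive, so the exponent equals $\sum_{i=1}^K\big(\langle\omega,B(t_iv_i)\rangle+\alpha(t_iv_i)\big)$.

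Since $\exp$ of a sum of formal series is the product of the exponentials, the right-hand side factors as
$$\prod_{i=1}^K\exp\big[\langle\omega,B(t_iv_i)\rangle+\alpha(t_iv_i)\big]=\prod_{i=1}^K\sum_{l=0}^\infty\frac{t_i^l}{l!}\langle S_l(\omega),v_i^{\otimes l}\rangle,$$
where the last equality is \eqref{fgh784gh74} applied with $v=t_iv_i$. The factor $i$ depends only on $t_i$, so the coefficient of $t_1^{l_1}\dotsm t_K^{l_K}$ on the right is $\prod_i\frac1{l_i!}\langle S_{l_i}(\omega),v_i^{\otimes l_i}\rangle$. Comparing coefficients and cancelling $\prod_i l_i!$ gives the claim.

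The step needing the most care is justifying the manipulations in several formal variables. One must check that substituting $v=\sum t_iv_i$ into a formal tensor power series gives a well-defined formal power series in $(t_1,\dots,t_K)$, and that it respects products and $\exp$. This holds because each homogeneous component $\langle C_n,v^{\otimes n}\rangle$ becomes a homogeneous polynomial of degree $n$ in the $t_i$, and the product in $\mathcal F(V,\mathbb F)$ was defined via \eqref{vcytdy}, which is compatible with this evaluation. The argument mirrors the two-variable argument used in the proof of (B4)$\Rightarrow$(B5). The multinomial expansion of $(\sum t_iv_i)^{\otimes n}$ into symmetric tensors is routine by \eqref{xeras4waq53y}.
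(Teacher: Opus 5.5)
Your proposal is correct and follows essentially the same route as the paper. Both arguments substitute a linear combination with formal scalar coefficients into \eqref{fgh784gh74}, use the hypothesis to kill the mixed terms so that the exponential factorizes into a product of one-vector generating functions, and then compare coefficients. The paper writes this out only for $K=2$ (vectors $v,w$ with scalars $s,t$, ``to simplify the notation''), whereas you do general $K$ directly with $t_1,\dots,t_K$ and also correctly repair the typos in the statement.
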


\begin{proof} To simplify the notation, we will prove the corollary for two vectors $v$ and $w$ such that $B_{k+l}(v^{\otimes k}\odot w^{\otimes l})=0$ and $\langle \alpha_{k+l},v^{\otimes k}\odot w^{\otimes l}\rangle=0$ for all $k,l\in \mathbb N$. 

Let $s,t\in \mathbb F$. Then, for each $n\in \mathbb N$, $n\geq2$,
\begin{equation*}B_n(sv+tw)^{\otimes n}=\sum_{k=0}^n{n\choose k}s^kt^{n-k}B_n(v^{\otimes k}\odot w^{\otimes (n-k)})=s^nB_nv^{\otimes n}+t^nB_nw^{\otimes n},\end{equation*}
and similarly $\langle \alpha_n,(sv+tw)^{\otimes n}\rangle=s^n\langle\alpha_n,v^{\otimes n}\rangle+t^n\langle\alpha_n,w^{\otimes n}\rangle$. 
Therefore, by \eqref{fgh784gh74},
\begin{align}\label{453tg376d8}
\sum_{n=0}^\infty \frac 1{n!}\langle S_n(\omega),(sv+tw)^{\otimes n}\rangle&=\exp\big[\langle\omega,B(sv)\rangle+\alpha(sv)\big]\exp\big[\langle\omega,B(tw)\rangle+\alpha(tw)\big]\notag\\
&=\sum_{k=0}^\infty \frac {s^k}{k!}\langle S_k(\omega),v^{\otimes k}\rangle\sum_{l=0}^\infty \frac {t^l}{l!}\langle S_l(\omega),w^{\otimes l}\rangle\notag\\
&=\sum_{k=0}^\infty\sum_{l=0}^\infty \frac{s^kt^l}{k!\,l!}\langle S_k(\omega),v^{\otimes k}\rangle\langle S_l(\omega),w^{\otimes l}\rangle.\end{align}
On the other hand,
\begin{align}\label{geduye378d}
\sum_{n=0}^\infty \frac 1{n!}\langle S_n(\omega),(sv+tw)^{\otimes n}\rangle&=\sum_{n=0}^\infty\frac1{n!}\sum_{k=0}^n{n \choose k}s^kt^{n-k}\langle S_n(\omega),v^{\otimes k}\odot w^{\otimes (n-k)}\rangle\notag\\
& =\sum_{n=0}^\infty\sum_{k=0}^n\frac1{k!\,(n-k)!}s^kt^{n-k}\langle S_n(\omega),v^{\otimes k}\odot w^{\otimes (n-k)}\rangle\notag\\
&=\sum_{k=0}^\infty\sum_{l=0}^\infty\frac{s^kt^l}{k!\,l!}\langle S_{k+l}(\omega),v^{\otimes k}\odot w^{\otimes l}\rangle.
\end{align}
Note that both \eqref{453tg376d8} and \eqref{geduye378d} can be thought of as formal power series in $s$ and $t$. Hence, by \eqref{453tg376d8} and \eqref{geduye378d}, we get 
\begin{equation*}
\langle S_{k+l}(\omega),v^{\otimes k}\odot w^{\otimes l}\rangle=\langle S_k(\omega),v^{\otimes k}\rangle \langle S_l(\omega), w^{\otimes l}\rangle,\quad k,l\in \mathbb N.\qedhere\end{equation*}
\end{proof}

We finish this section with a brief discussion of Appell sequences. Let $S\in\mathbb S(V)$ and let $(S_n(\omega))_{n=0}^\infty$ be the corresponding Sheffer sequence. We say that $S$  is an {\it Appell operator} and $(S_n(\omega))_{n=0}^\infty$ is an {\it Appell sequence} if the generating function of $(S_n(\omega))_{n=0}^\infty$ is given by formula \eqref{ldj84n} in which $B(v)=v$.  We denote by $\mathbb A(V)$ the set of all Appell operators.

By Theorem~\ref{ctrst5u} (S3), a polynomial sequence $(S_n(\omega))_{n=0}^\infty$ is an Appel sequence if and only if $D$ is its lowering differential.
By Theorem~\ref{ctrst5u} (S5), a polynomial sequence $(S_n(\omega))_{n=0}^\infty$ is an Appel sequence if and only if there exists $\rho(v)=\sum_{k=0}^\infty \langle \rho_k, v^{\otimes n}\rangle\in \mathcal F_0(V)$ such that, for all $\omega \in V^*$  and $n \in \mathbb N$,
$S_n(\omega) =\sum_{k=0}^n{n \choose k}
\rho_k\odot \omega^{\otimes (n-k)}$.

\section{Recurrence formulas}\label{cxdzrzrzarwar}

 Let $(S_n(\omega))_{n=0}^\infty$ be a Sheffer sequence with generating function \eqref{fgh784gh74}, in which $B(v)=\sum_{k=1}^\infty B_kv^{\otimes k}$ and $\alpha(v)=\sum_{k=1}^\infty\langle \alpha_k,v^{\otimes k}\rangle$.   
 Recall that the lowering differential $Q$ corresponding to the Sheffer sequence $(S_n(\omega))_{n=0}^\infty$ is given by formula \eqref{vcftdtrdsy6ek} in which $L(v)=\sum_{k=1}^\infty L_kv^{\otimes k}=B^{\langle-1\rangle}(v)$.

 For each $\zeta\in V$, we define the 
 {\it raising  operator $R(\zeta)\in \mathcal L(\mathcal P  (V^*))$ corresponding to the Sheffer sequence $(S_n(\omega))_{n=0}^\infty$}  by requiring that 
 $$\big(R(\zeta)\langle S_n(\cdot),f_n\rangle\big)(\omega)=\langle S_{n+1}(\omega),f_n\odot \zeta\rangle,\quad f_n\in V^{\odot n},\ n\in\mathbb N_0.$$

We will now obtain  an explicit formula for the raising 
  operators. To this end, for each fixed $\zeta\in V$, we define formal tensor power series 
\begin{align}
\Phi_\zeta(v):&=B'(v;\zeta)\circ L(v)=B'(L(v);\zeta)\in\mathcal F(V,V),\notag\\
\Gamma_\zeta(v):&=\alpha'(v;\zeta)\circ L(v)=\alpha'(L(v);\zeta)\in\mathcal F(V,\mathbb F). 
\label{vcrdr6ts5a43aa}\end{align}

\begin{theorem}\label{dzerste7kui7} Under the above assumptions, we have, for each $\zeta\in V$, $p\in\mathcal P(V^*)$ and $\omega\in V^*$,
\begin{equation}\label{dtrd6t5swe5u4wu5}
\big(R(\zeta)p\big)(\omega)=
\big\langle\omega, (\Phi_\zeta(D)p)(\omega)\big\rangle+ (\Gamma_\zeta(D)p)(\omega). 
\end{equation}
\end{theorem}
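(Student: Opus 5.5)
Since both sides of \eqref{dtrd6t5swe5u4wu5} are linear in $p$, and by Lemma~\ref{vcfcfydy}~(i) the polynomials $\langle S_n(\cdot),v^{\otimes n}\rangle$ ($n\in\mathbb N_0$, $v\in V$) span $\mathcal P(V^*)$, I will verify the identity on these polynomials. I will pass to generating functions in a scalar parameter: fix $v\in V$ and $\zeta\in V$, multiply by $t^n/n!$ and sum over $n$. Then both sides become formal power series in $t$ with polynomial coefficients in $\omega$. Both sides are compared in this form, and equality of coefficients of $t^n$ then gives the claim for each $n$.

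\emph{Left-hand side.} By definition of $R(\zeta)$,
$$\sum_n\frac{t^n}{n!}\langle S_{n+1}(\omega),v^{\otimes n}\odot\zeta\rangle,$$
which is exactly the directional derivative in direction $\zeta$ (with respect to $tv$) of the generating function \eqref{fgh784gh74}. Indeed, differentiating $\sum_n\frac1{n!}\langle S_n(\omega),u^{\otimes n}\rangle$ at $u=tv$ in direction $\zeta$ gives $\sum_n\frac{1}{(n-1)!}\langle S_n(\omega),(tv)^{\otimes (n-1)}\odot\zeta\rangle$, using \eqref{dxzra4ywy}. On the other side, differentiating $\exp[\langle\omega,B(u)\rangle+\alpha(u)]$ gives
$$\big(\langle\omega,B'(tv;\zeta)\rangle+\alpha'(tv;\zeta)\big)\exp[\langle\omega,B(tv)\rangle+\alpha(tv)].$$
I need a chain/product rule for directional derivatives of formal tensor power series: the derivative of $\exp$ of a series, and of a product of series. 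This should be a routine check, coefficientwise on $v^{\otimes n}$.

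\emph{Right-hand side.} I will compute the action of $\Phi_\zeta(D)$ and $\Gamma_\zeta(D)$ on the generating function, using that $S$-polynomials diagonalize $Q$-operators. Write $\Gamma_\zeta(u)=\sum_k\langle\gamma_k,u^{\otimes k}\rangle$. By Theorem~\ref{ctrst5u}~(S3), $Q=L(D)$ is the lowering differential for $(S_n)$. Hence, by the argument of Theorem~\ref{45678765g} (formula \eqref{ctxsts} and the isomorphism $\mathcal I$), for any $\mathcal T(u)\in\mathcal F(V,\mathbb F)$ we have $\mathcal T(D)=(\mathcal T\circ B)(Q)$. Here $Q^k$ acts on the generating function of $(S_n)$ as multiplication by $(tv)^{\otimes k}$, by \eqref{ge4ew34wq43w}. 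Therefore
$$\sum_n\frac{t^n}{n!}\Gamma_\zeta(D)\langle S_n(\cdot),v^{\otimes n}\rangle=\Gamma_\zeta(B(tv))\,G_S(\omega,tv)=\alpha'(tv;\zeta)\,G_S(\omega,tv),$$
since $L\circ B=v$. Similarly, the $V$-valued analog via Corollary~\ref{xesa4aq46q327y} gives $\Phi_\zeta(D)\mapsto\Phi_\zeta(B(tv))=B'(tv;\zeta)$. Pairing with $\omega$ yields $\langle\omega,B'(tv;\zeta)\rangle G_S(\omega,tv)$. Adding the two terms matches the left-hand side.

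\emph{Main obstacle.} The step I expect to be hardest is rigorously justifying $\mathcal T(D)=(\mathcal T\circ B)(Q)$, including its $V$-valued version for $\Phi_\zeta$. Concretely, this means showing that $\langle T_k,D^k\rangle$ equals $\sum_m$ of $\langle\cdot,Q^m\rangle$-terms whose coefficients come from $B(u)^{\otimes k}$. I would establish it by the same three-step argument as in (B3)$\Rightarrow$(B4), now with the roles of $D$ and $Q$ swapped: $D$ is $B(Q)$, because $L$ and $B$ are compositional inverses and $\mathcal I$ (taken relative to $Q$) is an algebra isomorphism compatible with substitution. Under that isomorphism, \eqref{ctxsts} says $\langle T_k,D^k\rangle$ corresponds to $\langle T_k,B(u)^{\otimes k}\rangle$. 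The $V$-valued case follows by pairing with an arbitrary $\eta\in V^*$ and invoking Lemma~\ref{equation7867900}. The infinite sums cause no trouble, since on a polynomial of degree $n$ only finitely many terms act nontrivially.
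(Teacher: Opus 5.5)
Your proof is correct, and it takes a genuinely different route from the paper.

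The paper argues combinatorially. It starts from the set-partition formula of Corollary~\ref{dxzdrzdszrsxddx} and derives in \eqref{vytdyd} how an arbitrary series $\sum_k\frac1{k!}\langle T_k,D^k\rangle$ acts on $\langle S_n(\cdot),v_1\odot\dots\odot v_n\rangle$. It then splits off the $B_1$ and $\alpha_1$ terms and reduces the claim to finding $\Theta_k(\zeta)$, $\Xi_k(\zeta)$ satisfying the partition identities \eqref{vfyxreasry5ay45}, \eqref{bvyuf7}. It reinterprets these as identities at $\omega=0$ for the binomial sequence $(P_n)$, and only at the end converts $Q$-series into $D$-series via \eqref{ctxsts}.

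You bypass all of the partition bookkeeping. Raising is differentiation of the generating function in direction $\zeta$, which produces the multiplier $\langle\omega,B'(tv;\zeta)\rangle+\alpha'(tv;\zeta)$. Multiplying the generating function of $(S_n)$ by a series $\mathcal U(tv)$ is realized by the operator $\mathcal U(Q)$, because $Q$ lowers $(S_n)$ by Theorem~\ref{ctrst5u}~(S3) and \eqref{ge4ew34wq43w}. The conversion $\mathcal T(D)=(\mathcal T\circ B)(Q)$ then finishes the argument.

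What each route buys:
\begin{itemize}
\item Your route is shorter and treats the degree-one terms $B_1\zeta$, $\alpha_1$ uniformly with the rest.
\item It is the same mechanism the paper itself uses for Theorem~\ref{xts5yw5yww45uwe}, so both recurrence formulas come from a single idea.
\item The chain rule you need for $\exp$ of a series is exactly the tacit step taken there. It reduces to two-variable formal power series by reading off the coefficient of $s$ in $A(tv+s\zeta)$.
\item The paper's route, in exchange, yields the explicit partition formulas \eqref{vytdyd} and \eqref{fdre654w6} for the action of shift-invariant operators on Sheffer and binomial polynomials.
\end{itemize}

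The step you flag as the main obstacle is easier than you expect. By Theorem~\ref{bgnfm758} (taken with respect to $Q$), the coefficients of $\langle T_k,D^k\rangle$ are $\frac1{n!}\big(\langle T_k,D^k\rangle\langle P_n(\cdot),f_n\rangle\big)(0)=\frac{k!}{n!}\langle T_k,P_{kn}f_n\rangle$. Then \eqref{567yuj8u} shows at once that $\langle T_k,D^k\rangle$ corresponds to $\langle T_k,B(v)^{\otimes k}\rangle$. Equivalently, you can just invert \eqref{ctxsts} using $B\circ L=L\circ B=v$. No rerun of the three-step argument is needed.
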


\begin{remark} In the case where $(S_n(\omega))_{n=0}^\infty=(P_n(\omega))_{n=0}^\infty$ is a binomial sequence, we have $\alpha(v)=0$, hence formula~\eqref{dtrd6t5swe5u4wu5} simplifies as follows: $ \big(R(\zeta)p\big)(\omega)=
\big\langle\omega, (\Phi_\zeta(D)p)(\omega)\big\rangle$. \end{remark}

\begin{remark}
In view of  \eqref{crstese5ws5},  \eqref{vcfxdtrstea} and \eqref{vcrdr6ts5a43aa}, formula \eqref{dtrd6t5swe5u4wu5} can be written as follows:
\begin{align}
&(R(\zeta)p)(\omega)=\big(\langle\omega, B_1\zeta\rangle+\langle\alpha_1,\zeta\rangle\big) p(\omega)\notag\\
&\quad+\sum_{k=1}^\infty \sum_{n=1}^k(n+1) \sum_{i_1,\dots,i_n \geq 1 \atop i_1+\dots+i_n=k} 
\big\langle \omega B_{n+1}+\alpha_{n+1},\zeta\odot\big[(L_{i_1}\odot\dots\odot L_{i_n}) D^{k}p(\omega)\big]\big\rangle.\label{vcryds6uei}
\end{align}
\end{remark}

\begin{remark} \label{cftsts6e}
Consider the one-dimensional case, $V=\mathbb F$. Denote by $\Phi(t)$ and $\Gamma(t)$ the formal power series $\Phi_\zeta(t)$ and $\Gamma_\zeta(t)$ for $\zeta=1$. (Here $t$ is 
the variable from $\mathbb F$). Since $L(t)$ is the compositional inverse of $B(t)$ and $\alpha(t)=\log(A(t))$, we obtain:
\begin{align}
\Phi(t)&=B'(t)\circ L(t)=\frac 1{L'(t)},\notag\\
 \Gamma(t)&=\alpha'(t)\circ L(t)=\frac{A'(t)}{A(t)}\circ L(t)=\frac{C'(t)}{C(t)}\cdot \frac 1{L'(t)},\label{dydr758i58o58o}
 \end{align} 
where $C(t):=A(t)\circ L(t)$. Also, denoting $R:=R(1)$, we obtain from \eqref{dtrd6t5swe5u4wu5}: 
\begin{equation}\label{xdsre5sw5w5}
(Rp)(x)=x(\Phi(D)p)(x)+(\Gamma(D)p)(x).
\end{equation}
 In particular, in the case of a binomial sequence, formula \eqref{xdsre5sw5w5} becomes $(Rp)(x)=x(\Phi(D)p)(x)$, which is  called {\it Rodrigues formula\/} in \cite[Section~4, Theorem~4~(4)]{RotaKahanerOdlyzko}. In the case of a general Sheffer sequence over $\mathbb F$, formulas \eqref{dydr758i58o58o}, \eqref{xdsre5sw5w5} are shown in \cite[Theorem~3.7.1]{Roman}. Formula~\eqref{xdsre5sw5w5} is often called the {\it recurrence formula}.

\end{remark}

\begin{proof}[Proof of Theorem~\ref{dzerste7kui7}] We divide the proof into several 
steps.

{\it Step 1}. For each $k\in\mathbb N$, let us fix  $T_k\in(V^{\odot k})^*$. For $k,n\in\mathbb N$ with $n\ge k$ and any $v_1,\dots,v_n\in V$, formula \eqref{ge4ew34wq43w} with $Q=D$ implies
$$ \frac1{k!}\,\big(\langle T_k ,D^{k}\rangle\langle\cdot ^{\otimes n},v_1\odot\dots\odot v_n\rangle\big)(\omega)=\sum_{{\mathcal A\subset\{1,\dots,n\}}\atop{|\mathcal A|=k}}\big\langle T_k ,\underset{i\in \mathcal A}\odot v_i\big\rangle\big\langle\omega^{\otimes(n-k)},\underset{j\in \mathcal A^c}\odot v_j\big\rangle,$$
where $\mathcal A^c=\{1,\dots,n\}\setminus\mathcal A$. Hence, for any constants $c_1,\dots,c_n\in\mathbb F$, we obtain
$$ \frac1{k!}\,\bigg(\langle T_k, D^{k}\rangle\prod_{i=1}^n(\langle\cdot,v_i\rangle+c_i)\bigg)(\omega)=\sum_{{\mathcal A\subset\{1,\dots,n\}}\atop{|\mathcal A|=k}}\big\langle T_k, \underset{i\in \mathcal A}\odot v_i\big\rangle\prod_{j\in\mathcal A^c}(\langle\omega,v_j\rangle+c_j).$$
Therefore,
\begin{equation}\label{dxters5ewu}
\bigg(\sum_{k=1}^\infty \frac 1{k!}\,\langle T_k, D^{k}\rangle\prod_{i=1}^n(\langle\cdot,v_i\rangle+c_i)\bigg)(\omega) =\sum_{{\mathcal A\subset\{1,\dots,n\}}\atop{\mathcal A\ne\varnothing}}\big\langle T_{|\mathcal A|}, \underset{i\in \mathcal A}\odot v_i\big\rangle\prod_{j\in\mathcal A^c}(\langle\omega,v_j\rangle+c_j).\end{equation}

{\it Step 2}. Let $\pi=\{\mathcal A_1,\dots,\mathcal A_k\}\in\mathcal P(n)$. We write $\rho\subset\pi$, $\rho\ne\varnothing$ if $\rho=\{\mathcal A_{i_1},\dots,\mathcal A_{i_m}\}$, where $\{i_1,\dots,i_m\}$ is a nonempty subset of $\{1,\dots,k\}$. By \eqref{dxters5ewu}, we have for each $\pi\in\mathcal P(n)$,
\begin{align}
&\bigg(\sum_{k=1}^\infty \frac 1{k!}\,\langle T_k, D^{k}\rangle \prod_{\mathcal A\in\pi}\Big(
\big\langle \cdot,\tilde B_{|\mathcal A|}\underset{i\in \mathcal A}\odot v_i\big\rangle+\big\langle \tilde\alpha_{|\mathcal A|},\underset{i\in \mathcal A}\odot v_i\big\rangle\Big)\bigg)(\omega)\notag\\
&\quad=\sum_{\rho\subset\pi\atop \rho\ne\varnothing}\Big\langle T_{|\rho|},
\underset{\mathcal A\in\rho}\odot\Big(\tilde B_{|\mathcal A|} \underset{i\in\mathcal A}\odot v_i\Big)\Big\rangle
\prod_{\mathcal B\in\pi\atop\mathcal B\not\in \rho}
\bigg(
\big\langle \omega,\tilde B_{|\mathcal B|}\underset{j\in \mathcal B}\odot v_j\big\rangle+\big\langle \tilde\alpha_{|\mathcal B|},\underset{j\in \mathcal B}\odot v_j\big\rangle\bigg).\label{xersa5y}
\end{align}
By \eqref{cxrtstset55} and \eqref{xersa5y}, we have, for  $v_1,\dots,v_n\in V$,
\begin{align}
&\bigg(\sum_{k=1}^\infty \frac 1{k!}\,\langle T_k, D^{k}\rangle\langle S_n(\cdot),v_1\odot\dots\odot v_n\rangle\bigg)(\omega)\notag\\ 
&\quad=\sum_{\pi\in\mathcal P(n)}\sum_{\mathcal A\in\pi}
\bigg(\sum_{\rho\in\mathcal P(\mathcal A)}\Big\langle T_{|\rho|},\underset{\mathcal B\in\rho}{\odot}\big(\tilde B_{|\mathcal B|}\underset{i\in\mathcal B}\odot v_i\big)\Big\rangle \bigg)
\prod_{{\mathcal C\in\pi}\atop{\mathcal C\ne\mathcal A}}\Big(\langle\omega,\tilde B_{|\mathcal C|}\underset{j\in\mathcal C}\odot v_j \rangle+\langle\tilde\alpha_{|\mathcal C|},
\underset{j\in\mathcal C}\odot v_j\rangle\Big).\label{vytdyd}
\end{align}
(In  formula \eqref{vytdyd}, if $\pi$ contains a single set $\mathcal A=\{1,\dots,n\}$, the product over $\mathcal C\in\pi$ with $\mathcal C\ne\mathcal A$ is supposed to be equal to 1.)

{\it Step 3}. Denote by $(P_n(\omega))_{n=0}^\infty$ the binomial sequence corresponding to the Sheffer sequence $(S_n(\omega))_{n=0}^\infty$, i.e., $(P_n(\omega))_{n=0}^\infty$ has generating function \eqref{trse5sw5w}. Then, by formula~\eqref{vytdyd}, 
\begin{align}
&\bigg(\sum_{k=1}^\infty \frac 1{k!}\,\langle T_k, D^{k}\rangle\langle P_n(\cdot),v_1\odot\dots\odot v_n\rangle\bigg)(\omega)\notag\\ 
&\quad=\sum_{\pi\in\mathcal P(n)}\sum_{\mathcal A\in\pi}
\bigg(\sum_{\rho\in\mathcal P(\mathcal A)}\Big\langle T_{|\rho|},\underset{\mathcal B\in\rho}{\odot}\big(\tilde B_{|\mathcal B|}\underset{i\in\mathcal B}\odot v_i\big)\Big\rangle \bigg)
\prod_{{\mathcal C\in\pi}\atop{\mathcal C\ne\mathcal A}}\langle\omega,\tilde B_{|\mathcal C|}\underset{j\in\mathcal C}\odot v_j \rangle.\notag
\end{align}
Therefore,
\begin{equation}\label{fdre654w6}
\bigg(\sum_{k=1}^\infty \frac 1{k!}\,\langle T_k, D^{k}\rangle\langle P_n(\cdot),v_1\odot\dots\odot v_n\rangle\bigg)(0)= \sum_{\rho\in\mathcal P(n)}\Big\langle T_{|\rho|},\underset{\mathcal B\in\rho}{\odot}\big(\tilde B_{|\mathcal B|}\underset{i\in\mathcal B}\odot v_i\big)\Big\rangle .
\end{equation}

{\it Step 4}. 
For each $\zeta\in V$ and $n\in\mathbb N$, we define $\widehat B_{n}(\zeta)\in\mathcal L(V^{\odot n},V)$ and $\widehat\alpha_{n}(\zeta)\in (V^{\odot n})^*$ by 
\begin{equation}\label{dy6de6ir5}
\widehat B_{n}(\zeta)f_n:=\tilde B_{n+1}(\zeta\odot f_n),\quad \langle\widehat\alpha_{n}(\lambda),f_n\rangle:=\langle\tilde\alpha_{n+1},\zeta\odot f_n\rangle,\quad f_n\in V^{\odot n}.
\end{equation}
Then, by formula \eqref{cxrtstset55}, we have, for $\zeta,v\in V$ and $n\in\mathbb N$,
\begin{align}
&\langle S_{n+1}(\omega),\zeta\odot v^{\otimes n}\rangle=\big(\langle\omega, B_1\zeta\rangle+\langle \alpha_1,\zeta\rangle\big)\langle S_n(\omega),v^{\otimes n}\rangle\notag\\
&\quad+\sum _{\pi\in\mathcal P(n)}\sum_{\mathcal A\in\pi} \big(\langle\omega,\widehat B_{|\mathcal A|}(\zeta)  v^{\otimes|\mathcal A|}\rangle+\langle \widehat\alpha_{|\mathcal A|}(\zeta),v^{\otimes|\mathcal A|}\rangle\big) 
\prod_{{\mathcal B\in\pi}\atop{\mathcal B\ne\mathcal A}}\big(\langle\omega,\tilde B_{|\mathcal B|}v^{\otimes|\mathcal B|}\rangle+\langle\tilde\alpha_{|\mathcal B|},v^{\otimes|\mathcal B|}\rangle\big).\label{cdtre6u}
\end{align}
Note also that
\begin{equation}
\langle S_{1}(\omega),\zeta\rangle= \langle\omega, B_1\zeta\rangle+\langle \alpha_1,\zeta\rangle.\label{vdtrs6ue}
\end{equation}

 {\it Step 5}. Fix $\zeta\in V$. Assume that $\Theta_k(\zeta)\in \mathcal L(V^{\odot k},V)$ and $\Xi_k(\zeta)\in(V^{\odot k})^*$
($k\in\mathbb N$) are 
 such that, for each  $v\in V$ and $n\in\mathbb N$,
\begin{align}
&\sum_{\rho\in\mathcal P(n)}  \Theta_{|\rho|}(\zeta)\bigg(\underset{\mathcal B\in\rho}{\odot}\big(\tilde B_{|\mathcal B|}v^{\otimes |\mathcal B|}\big)\bigg)=\widehat B_{n}(\zeta)v^{\otimes n} ,\label{vfyxreasry5ay45}\\
&\sum_{\rho\in\mathcal P(n)} \Big\langle \Xi_{|\rho|}(\zeta), \underset{\mathcal B\in\rho}{\odot}\big(\tilde B_{|\mathcal B|}v^{\otimes |\mathcal B|}\big)\Big\rangle =\langle \widehat\alpha_{n}(\zeta),v^{\otimes n}\big\rangle.
\label{bvyuf7}\end{align}
Then, by \eqref{vytdyd}, \eqref{vfyxreasry5ay45} and \eqref{bvyuf7}, we obtain, for $\omega\in V^*$ and $v\in V$,
\begin{align}
&\bigg(\sum_{k=1}^\infty \frac 1{k!}\,\langle \omega \Theta_k(\zeta)+\Xi_k(\zeta), D^{k}\rangle
\langle S_n(\cdot),v^{\otimes n}\rangle\bigg)(\omega)\notag\\ 
&\quad=\sum_{\pi\in\mathcal P(n)}\sum_{\mathcal A\in\pi}
\big(\langle\omega,\widehat B_{|\mathcal A|}v^{\otimes|\mathcal A|}\rangle+\langle\widehat \alpha_{|\mathcal A|},v^{\otimes |\mathcal A|}\rangle \big)
\prod_{{\mathcal B\in\pi}\atop{\mathcal B\ne\mathcal A}}\Big(\langle\omega,\tilde B_{|\mathcal B|}v^{\otimes|\mathcal B|} \rangle+\langle\tilde\alpha_{|\mathcal B|},
v^{\otimes|\mathcal B|}\rangle\Big).\label{vcfxtsxte5su5}
\end{align}

By \eqref{cdtre6u} and \eqref{vcfxtsxte5su5}, we have, for each $p\in\mathcal P(V^*)$, $\omega\in V^*$ and $v\in V$,
\begin{align}
(R(\zeta)p)(\omega)&=\big(\langle\omega, B_1\zeta\rangle+\langle \alpha_1,\zeta\rangle\big)p(\omega)+\bigg\langle\omega, \bigg(\sum_{k=1}^\infty \frac 1{k!}\,\Theta_k(\zeta)D^kp\bigg)(\omega)\bigg\rangle\notag\\
&\quad+\bigg(\sum_{k=1}^\infty \frac 1{k!}\,\langle \Xi_k(\zeta), D^k\rangle p\bigg)(\omega).\label{vgdctrs5ea43y7}
\end{align}

{\it Step 6}. In view of \eqref{fdre654w6}, conditions \eqref{vfyxreasry5ay45},  \eqref{bvyuf7}   can be reformulated as follows: for each 
$\zeta\in V$, $v\in V$ and $n\in\mathbb N$, 
\begin{align}
&\bigg(\sum_{k=1}^\infty \frac 1{k!}\,\Theta_k(\zeta) D^{k} \langle P_n(\cdot),v^{\otimes n}\rangle\bigg)(0)=
\widehat B_{n}(\zeta)v^{\otimes n},\label{vdrste5s5sw5}\\
&\bigg(\sum_{k=1}^\infty \frac 1{k!}\,\langle \Xi_k(\zeta), D^{k} \rangle\langle P_n(\cdot),v^{\otimes n}\rangle\bigg)(0)=\langle \widehat\alpha_{n}(\zeta),v^{\otimes n}\rangle.
\label{dzrwaw}\end{align}

By \eqref{ge4ew34wq43w} and Lemma \ref{gfyrdr6ew64ui}, we have, for any $T_k\in(V^{\odot k})^*$ ($k\in\mathbb N$), $v\in V$ and $n\in\mathbb N$,
\begin{equation*}
\bigg(\sum_{k=1}^\infty \frac 1{k!}\,\langle T_k, Q^{k}\rangle\langle P_n(\cdot),v^{\otimes n}\rangle\bigg)(0)=\langle T_n,v^{\odot n}\rangle.
\end{equation*}
Therefore,
\begin{align*}
\bigg(\sum_{k=1}^\infty \frac 1{k!}\,\widehat B_{k}(\zeta)Q^{k}\langle P_n(\cdot),v^{\otimes n}\rangle\bigg)(0)&=\widehat B_{n}(\zeta)v^{\odot n},\\
\bigg(\sum_{k=1}^\infty \frac 1{k!}\,\langle  \widehat\alpha_{k}(\zeta), Q^{k}\rangle\langle P_n(\cdot),v^{\otimes n}\rangle\bigg)(0)&=\langle  \widehat\alpha_{n}(\zeta),v^{\odot n}\rangle
\end{align*}
Hence, conditions \eqref{vdrste5s5sw5}, \eqref{dzrwaw} are satisfied provided the following equalities hold true:
\begin{align}
\sum_{k=1}^\infty \frac 1{k!}\,  \Theta_k(\zeta) D^{k} &=\sum_{k=1}^\infty \frac 1{k!}\,\widehat B_{k}(\zeta) Q^{k}\label{cdaewa4yw6u7j8},\\
\sum_{k=1}^\infty \frac 1{k!}\,  \langle \Xi_k(\zeta), D^{k}\rangle &=\sum_{k=1}^\infty \frac 1{k!}\,\langle \widehat \alpha_{k}(\zeta), Q^{k}\rangle.
\label{vyjdk7r8}\end{align}

In view of Corollaries~\ref{567ty7} (i) and \ref{xesa4aq46q327y} (ii) with $Q=D$, conditions \eqref{cdaewa4yw6u7j8}, \eqref{vyjdk7r8} can be written as follows:
\begin{align}
\sum_{k=1}^\infty \frac 1{k!}\,  \Theta_k(\zeta) v^{\otimes k}&=\sum_{k=1}^\infty \frac 1{k!}\,\mathcal J\big(\widehat B_{k}(\zeta) Q^{k}\big)(v),\label{vcyd6i5}\\
\sum_{k=1}^\infty \frac 1{k!}\, \langle \Xi_k(\zeta), v^{\otimes k}\rangle&=\sum_{k=1}^\infty \frac 1{k!}\,\mathcal I\big(\langle \widehat \alpha_{k}(\zeta), Q^{k}\rangle\big)(v).\label{dsthe}
\end{align}

We easily conclude from \eqref{ctxsts} that 
\begin{align}
&\sum_{k=1}^\infty \frac 1{k!}\,\mathcal J\big(\widehat B_{k}(\zeta) Q^{k}\big)(v)=
\sum_{k=1}^\infty \frac 1{k!}\,\widehat B_{k}(\zeta) L(v)^{\otimes k}
=\sum_{k=1}^\infty \frac 1{k!}\,\tilde B_{k+1}(\zeta\odot L(v)^{\otimes k})\notag\\
&\quad= \sum_{k=2}^\infty\frac{k}{k!}\,\tilde B_k(\zeta\odot L(v)^{\otimes(k-1)})=\breve B'(v;\zeta)\circ L(v). \label{vftr6e645}
\end{align}
Here, we used the formal tensor power series 
$$\breve B(v):=\sum_{k=2}^\infty \frac1{k!}\tilde B_kv^{\otimes k}=B(v)-B_1v.$$ 
Similarly,
\begin{equation}
\sum_{k=1}^\infty \frac 1{k!}\,\mathcal I\big(\widehat \alpha_{k}(\zeta) Q^{k}\big)(v)=\breve \alpha'(v;\zeta)\circ L(v),
\label{ryd6ue5i7}
\end{equation}
where 
$$\breve \alpha (v):=\sum_{k=2}^\infty \frac1{k!}\langle \tilde \alpha _k,v^{\otimes k}\rangle=\alpha(v)-\langle \alpha_1,v\rangle.$$ 

Thus, conditions \eqref{vcyd6i5}, \eqref{dsthe} are satisfied if we choose $\Theta_k(\zeta)$ and $\Xi_k(\zeta)$ as follows:
\begin{equation}\label{vcyd6uei6}
\sum_{k=1}^\infty \frac 1{k!}\,  \Theta_k(\zeta) v^{\otimes k}=\breve B'(v;\zeta)\circ L(v),\quad \sum_{k=1}^\infty \frac 1{k!}\,  \langle \Xi_k(\zeta), v^{\otimes k}\rangle=\breve\alpha'(v;\zeta)\circ L(v).\end{equation}

 Denote $\breve\Phi_\zeta(v):=\breve B'(v;\zeta)\circ L(v)$ and $\breve\Gamma_\zeta(v):=\breve \alpha'(v;\zeta)\circ L(v)$. Then, by \eqref{vgdctrs5ea43y7} and \eqref{vcyd6uei6}, we obtain, for 
 $p\in\mathcal P(V^*)$, $\zeta\in V$ and $\omega\in V^*$,
\begin{align*}
(R(\zeta)p)(\omega)&=\big(\langle\omega, B_1\zeta\rangle
+\langle\alpha_1,\zeta\rangle\big)p(\omega)+\langle\omega,(\breve\Phi_\zeta(D)p)(\omega)\rangle+(\breve\Gamma_\zeta(D)p)(\omega)\\
&=\langle\omega,(\Phi_\zeta(D)p)(\omega)\rangle+(\Gamma_\zeta(D)p)(\omega).\qedhere
\end{align*}
\end{proof}

 Let $(S_n(\omega))_{n=0}^\infty$ be a Sheffer sequence.  Our next aim is to represent the polynomial $\langle\omega,\zeta\rangle \langle S_n(\omega),f_n\rangle$ through the Sheffer polynomials of degrees $0,1,\dots,n+1$.  
To simplify the notation, we will assume that the Sheffer sequence is monic. The interested reader may easily extend our result to the case of a general Sheffer sequence.

 Let $(R(\zeta))_{\zeta\in V}$ be the family of the raising operators for the monic Sheffer sequence $(S_n(\omega))_{n=0}^\infty$\,. We define the {\it raising differential} $R\in\mathcal L\big(\mathcal P(V^*,V),\mathcal P(V^*)\big)$ that satisfies for each $\zeta\in V$, $\omega\in V^*$ and $p\in\mathcal P(V^*)$:
\begin{equation}\label{vcydyrked7ir5o7}
R(\zeta p(\cdot))(\omega)=(R(\zeta)p)(\omega).\end{equation}
By \eqref{dzsrea4ywu565468l} and \eqref{vcydyrked7ir5o7}, we have, for each $G_k\in\mathcal L(V^{\odot k},V)$ ($k\in\mathbb N$), 
 \begin{align*}
 \big(RG_kQ^k\langle S_n(\cdot),v^{\otimes n}\rangle\big)(\omega)&=(n)_kR(G_kv^{\otimes k})\langle S_{n-k}(\omega),v^{\otimes(n-k)}\rangle\big)\\
  &=(n)_k\langle S_{n-k+1}(\omega),v^{\otimes(n-k)}\odot(G_kv^{\otimes k})\rangle.
  \end{align*}

 For each $\zeta\in V$, we define
\begin{align}
\Psi_\zeta(v):&=L'(v;\zeta)\circ B(v)=L'(B(v);\zeta)\in\mathcal F(V,V),\notag\\
\Delta_\zeta(v):&=-\alpha'(v;\Psi_\zeta(v))\in\mathcal F(V,\mathbb F). 
\label{gfr6e54w38ffdrd}\end{align}
 We denote $\Psi_\zeta(v)=\sum_{k=0}^\infty \Psi_{\zeta,k}v^{\otimes k}$ and  $\Delta_\zeta(v)=\sum_{k=0}^\infty\langle \Delta_{\zeta,k},v^{\otimes k}\rangle$. Note that $\Psi_{\zeta,0}=\zeta$ and $\Delta_{\zeta,0}=-\langle\alpha_1,\zeta\rangle$.

For each $\zeta\in V$, we define the linear operator $M(\zeta)\in\mathcal L(\mathcal P(V^*))$ by $(M(\zeta)p)(\omega):=\langle\omega,\zeta\rangle p(\omega)$ for each $p\in\mathcal P(V^*)$.

 \begin{theorem}\label{xts5yw5yww45uwe} Let $(S_n(\omega))_{n=0}^\infty$ be a monic Sheffer sequence, and let $Q$ be its lowering differential. 
  Then, for each $\zeta\in V$,
 \begin{equation}\label{xresraqw5ywu5}
 M(\zeta)=R \Psi_\zeta(Q)+\Delta_\zeta(Q).
 \end{equation}
 Equivalently, for each $n\in\mathbb N_0$ and $v\in V$,
 \begin{align}
 &\langle\omega,\zeta\rangle\langle S_n(\omega),v^{\otimes n}\rangle=\langle S_{n+1}(\omega),v^{\otimes n}\odot\zeta\rangle\notag\\
&
 +\sum_{m=1}^{n}\Big\langle S_m(\omega), (n)_{n-m+1}v^{\otimes(m-1)}\odot (\Psi_{\zeta,n-m+1}v^{\otimes(n-m+1)})\notag\\
 &\qquad\quad+(n)_{n-m}v^{\otimes m}\langle\Delta_{\zeta,n-m},v^{\otimes(n-m)}\rangle\Big\rangle+n!\langle\Delta_{\zeta,n},v^{\otimes n}\rangle.\label{cxesa5yw7ypui8uytr}
 \end{align}
 \end{theorem}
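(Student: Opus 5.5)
The plan is to work entirely with the generating function \eqref{fgh784gh74},
$$G(\omega,v)=\sum_{n=0}^\infty\frac1{n!}\langle S_n(\omega),v^{\otimes n}\rangle=\exp\big[\langle\omega,B(v)\rangle+\alpha(v)\big],$$
and to realize multiplication by $\langle\omega,\zeta\rangle$ as a directional derivative of $G$ in $v$. First I will prove \eqref{cxesa5yw7ypui8uytr}. Then I will obtain the operator identity \eqref{xresraqw5ywu5} by checking both sides on the spanning polynomials $\langle S_n(\cdot),v^{\otimes n}\rangle$, using Lemma~\ref{vcfcfydy}~(i).

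\emph{Step 1 (chain rule).} For a fixed $\omega$, I will establish
$$G'(\omega,v;W(v))=\big(\langle\omega,B'(v;W(v))\rangle+\alpha'(v;W(v))\big)G(\omega,v)$$
for any $W(v)\in\mathcal F(V,V)$. To do this I expand $\exp$ termwise and use \eqref{cxtsteyew46u} together with the product rule for $\odot$. Alternatively, I restrict to $v\mapsto tv+s w$ and use the one-variable chain rule in formal power series.

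\emph{Step 2 (choice of direction).} Differentiating the identity $L(B(v))=v$ in direction $w$ gives $L'(B(v);B'(v;w))=w$. Since $B'(v;\cdot)$ is invertible as a formal series (its constant term is $B_1$), this yields $B'(v;\Psi_\zeta(v))=\zeta$ with $\Psi_\zeta(v)=L'(B(v);\zeta)$. I expect this to be the main obstacle, because it needs a rigorous chain rule for composition of formal tensor power series over $V$. I would prove it coefficientwise via \eqref{vcfxdtrstea}, reducing to scalar formal power series in $t$ by evaluating on $tv+sw$ and extracting the coefficient of $s$.

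\emph{Step 3 (generating-function identity).} Taking $W=\Psi_\zeta$ in Step 1 and using the definition \eqref{gfr6e54w38ffdrd} of $\Delta_\zeta$, I get
$$\langle\omega,\zeta\rangle G(\omega,v)=G'(\omega,v;\Psi_\zeta(v))+\Delta_\zeta(v)G(\omega,v).$$

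\emph{Step 4 (comparing coefficients of $v^{\otimes n}$).} On the left, the coefficient is $\frac1{n!}\langle\omega,\zeta\rangle\langle S_n(\omega),v^{\otimes n}\rangle$.

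The derivative term contributes
$$\sum_{m}\frac{m}{m!}\langle S_m(\omega),v^{\otimes(m-1)}\odot\Psi_{\zeta,k}v^{\otimes k}\rangle,\qquad m-1+k=n.$$
The case $k=0$ (so $m=n+1$) gives $\frac1{n!}\langle S_{n+1}(\omega),v^{\otimes n}\odot\zeta\rangle$, since $\Psi_{\zeta,0}=\zeta$. Multiplying by $n!$, the remaining terms carry the factor $n!/(m-1)!=(n)_{n-m+1}$.

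The product $\Delta_\zeta(v)G$ contributes $\sum_{j}\langle\Delta_{\zeta,n-j},v^{\otimes(n-j)}\rangle\frac1{j!}\langle S_j(\omega),v^{\otimes j}\rangle$. Multiplying by $n!$ gives $(n)_{n-j}$. Its $j=0$ term is $n!\langle\Delta_{\zeta,n},v^{\otimes n}\rangle$, because $S_0=1$ by monicity.

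This is exactly \eqref{cxesa5yw7ypui8uytr}. By the polarization formula \eqref{cxtuewu533} and Lemma~\ref{equation7867900}, identities tested on $v^{\otimes n}$ suffice.

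\emph{Step 5 (operator form).} By Theorem~\ref{ctrst5u}~(S3), $Q$ lowers $S_n$. Using \eqref{dzsrea4ywu565468l} and the displayed action of $RG_kQ^k$, the term $R\Psi_{\zeta,k}Q^k$ applied to $\langle S_n(\cdot),v^{\otimes n}\rangle$ gives
$$(n)_k\langle S_{n-k+1}(\omega),v^{\otimes(n-k)}\odot\Psi_{\zeta,k}v^{\otimes k}\rangle.$$
The term $\langle\Delta_{\zeta,k},Q^k\rangle$ gives $(n)_k\langle\Delta_{\zeta,k},v^{\otimes k}\rangle\langle S_{n-k}(\omega),v^{\otimes(n-k)}\rangle$, by \eqref{treseaw4aq42q24}. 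Re-indexing with $m=n-k+1$ and $m=n-k$ respectively reproduces the right-hand side of \eqref{cxesa5yw7ypui8uytr}. Hence \eqref{xresraqw5ywu5} follows from Lemma~\ref{vcfcfydy}~(i).
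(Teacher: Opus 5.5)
Your proof is correct. From the generating-function identity
\[
\sum_{n=0}^\infty \frac1{n!}\langle S_{n+1}(\omega),v^{\otimes n}\odot\Psi_\zeta(v)\rangle=G(\omega,v)\big(\langle\omega,\zeta\rangle-\Delta_\zeta(v)\big)
\]
onward (your Steps~3--4), it coincides with the paper's argument.

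The difference is in how this identity is reached. You differentiate $G$ along the $v$-dependent direction $\Psi_\zeta(v)$, so you have to establish $B'(v;\Psi_\zeta(v))=\zeta$. The paper never needs that lemma. It first substitutes $L(v)$ into \eqref{fgh784gh74}, so the exponent becomes $\langle\omega,v\rangle+\alpha(L(v))$. It then differentiates in the \emph{constant} direction $\zeta$, so $\langle\omega,\zeta\rangle$ appears for free, and only afterwards substitutes $B(v)$ back; see \eqref{fre7i4e7io4}--\eqref{tre46e34344}.

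Both routes rest on the same chain rule and the same associativity of substitution for formal tensor power series. The paper's ordering is shorter. Yours makes it transparent that $M(\zeta)$ is differentiation of the generating function along the direction that $B'(v;\cdot)$ sends to $\zeta$.

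Your Step~2, which you flagged as the main obstacle, can be streamlined. Differentiating $B(L(u))=u$ (instead of $L(B(v))=v$) gives $B'(L(u);L'(u;\zeta))=\zeta$. Substituting $u=B(v)$ then yields $B'(v;\Psi_\zeta(v))=\zeta$ with no inversion argument at all. Your left-inverse argument is still valid. For instance, fix $v$ and work in $\mathcal L(V)[[t]]$ with $tv$: there an element with invertible constant term $B_1$ is invertible, so its left inverse is two-sided.

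Finally, your Step~5 spells out the passage to the operator identity \eqref{xresraqw5ywu5}, which the paper leaves implicit.
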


 \begin{remark}\label{cftdsy6rue6i}
  We advise the reader to compare formula \eqref{cxesa5yw7ypui8uytr} with the recurrence formula obtained in  \cite[Theorem~2.3.4]{Watanabe2}.
  \end{remark}
  
  \begin{remark}Assume that, for each $\zeta \in V$, $\Psi_{\zeta,k}=0$ for $k\ge3$ and $\Delta_{\zeta,k}=0$ for $k\ge2$. Then, formula \eqref{cxesa5yw7ypui8uytr} has the following three-diagonal form:
  \begin{align}
  &\langle\omega,\zeta\rangle\langle S_n(\omega),v^{\otimes n}\rangle=\langle S_{n+1}(\omega),v^{\otimes n}\odot\zeta\rangle
  +\big\langle S_n(\omega), nv^{\otimes(n-1)}\odot(\Psi_{\zeta,1}v)+v^{\otimes n}\Delta_{\zeta,0}\big\rangle\notag\\
  &\quad+\big\langle S_{n-1}(\omega), n(n-1)v^{\otimes(n-2)}\odot(\Psi_{\zeta,1}v^{\otimes 2})+nv^{\otimes (n-1)}\langle\Delta_{\zeta,1}v\rangle\big\rangle.\notag
    \end{align}
 Such Sheffer sequences are important for probability, infinite-dimensional analysis and mathematical physics,  see e.g.\ \cite{BK, Lytvynov2003,Jansen1,Hida,ItoKubo}. 
  \end{remark}

  \begin{remark}
  Similarly to Remark~\ref{cftsts6e}, formula \eqref{gfr6e54w38ffdrd} takes the following form  in the case $V=\mathbb F$:
  $\Psi(t)=\frac1{B'(t)}$, $\Delta(t)=-\frac{\alpha'(t)}{B'(t)}$.
  \end{remark}
  
  \begin{proof}[Proof of Theorem~\ref{xts5yw5yww45uwe}]
 Substituting $L(v)$ into the formal tensor power series in $v$ on the left- and right-hand sides of formula 
 \eqref{fgh784gh74} gives: 
  \begin{equation}\label{fre7i4e7io4}
  \sum_{n=0}^\infty \frac1{n!}\langle S_n(\omega),L(v)^{\otimes n}\rangle=\exp\big[\langle\omega,v\rangle+\alpha(L(v))\big].
  \end{equation}
Let $\zeta\in V$.  We note that
$$\frac{d}{dt}\Big|_{t=0}\alpha(L(v+t\zeta))=\sum_{n=1}^\infty n\langle\alpha_n, L(v)^{\otimes(n-1)}\odot L'(v,\zeta)\rangle=\alpha'(L(v),L'(v,\zeta)). $$
Hence, the differentiation of the formal tensor power series in $v$ on the left- and right-hand sides of formula \eqref{fre7i4e7io4} in direction $\zeta$ gives:
\begin{align}
  &\sum_{n=1}^\infty \frac{1}{(n-1)!}\langle S_n(\omega),L(v)^{\otimes (n-1)}\odot L'(v,\zeta)\rangle\notag\\
  &\quad =\exp\big[\langle\omega,v\rangle+\alpha(L(v))\big]\big(\langle\omega,\zeta\rangle+\alpha'(L(v);L'(v,\zeta))\big).
 \label{yutfr65ue65e58} \end{align}
 Substituting $B(v)$ into the formal tensor power series in $v$ on the left- and right-hand sides of formula 
 \eqref{yutfr65ue65e58} gives: 
\begin{align}
  &\sum_{n=1}^\infty \frac1{(n-1)!}\langle S_n(\omega),v^{\otimes (n-1)}\odot L'(B(v),\zeta)\rangle\notag\\
  &\quad =\exp\big[\langle\omega,B(v)\rangle+\alpha(v)]\big(\langle\omega,\zeta\rangle+\alpha'(v;L'(B(v),\zeta))\big).
 \label{tre46e34344} \end{align}
 In view of \eqref{fgh784gh74} and \eqref{gfr6e54w38ffdrd}, formula \eqref{tre46e34344}  can be written as follows:
 \begin{equation*}
  \sum_{n=0}^\infty \frac1{n!}\langle S_{n+1}(\omega),v^{\otimes n }\odot\Psi_\zeta(v)\rangle
    =\sum_{n=0}^\infty \frac1{n!}\langle S_n(\omega),v^{\otimes n}\rangle  
  \big(\langle\omega,\zeta\rangle-\Delta_{\zeta}(v)\big),
\end{equation*}
or equivalently, 
 \begin{align}
 &\sum_{n=0}^\infty \frac1{n!}\langle S_n(\omega),v^{\otimes n}\rangle\langle\omega,\zeta\rangle
 =\sum_{n=0}^\infty \sum_{m=1}^{n+1}\frac1{(m-1)!}\langle S_m(\omega),(\mathbf 1_{m-1}\odot\Psi_{\zeta,n-m+1})v^{\otimes n}\rangle \notag\\
 &\quad+\sum_{n=0}^\infty \sum_{m=0}^{n}\frac1{m!}\langle S_m(\omega),(\mathbf 1_{m}\odot\Delta_{\zeta,n-m})v^{\otimes n}\rangle.\notag
 \end{align}
 Therefore,  for each $n\in\mathbb N_0$,
 \begin{align}
&\langle S_n(\omega),v^{\otimes n}\rangle\langle\omega,\zeta\rangle=\langle S_{n+1}(\omega),v^{\otimes n}\odot\zeta\rangle\notag\\
&\quad+
\sum_{m=1}^{n}\frac{n!}{(m-1)!}\langle S_m(\omega),v^{\otimes(m-1)}\odot(\Psi_{\zeta,n-m+1}v^{\otimes(n-m+1)})\rangle\notag\\
&\quad + \sum_{m=0}^{n}\frac{n!}{m!}\langle S_m(\omega),v^{\otimes m}\rangle \langle \Delta_{\zeta,n-m},v^{\otimes(n-m)}\rangle,\notag
 \end{align} 
 which implies formula \eqref{cxesa5yw7ypui8uytr}, hence also formula \eqref{xresraqw5ywu5}. 
  \end{proof}

\section{The Sheffer group} \label{cfdrseepioioi}

\begin{theorem} \label{cxdsresay5e}
{\rm (i)} The set  $\mathbb S(V)$ of all Sheffer operators, equipped with the product of linear operators, is a group, which will be called the Sheffer group (over $V$). 
The set of Appell operators , $\mathbb A(V)$, is a normal subgroup of $\mathbb S(V)$, and the set of umbral operators, $\mathbb B(V)$, is a   subgroup of $\mathbb S(V)$. Furthermore, the group $\mathbb S(V)$ is the semidirect product of~$\mathbb A(V)$ and~$\mathbb B(V)$, i.e., $\mathbb S(V)=\mathbb A(V)\rtimes \mathbb B(V)$.

{\rm (ii)} We define a bijective map $\mathfrak R:\mathbb S(V) \to \mathcal R(V)$ as follows: for each Sheffer operator~$S$ whose Sheffer sequence $(S_n(\omega))_{n=0}^\infty$ has generating function \eqref{ldj84n}, we set $\mathfrak R(S):=(A(v),B(v))$. The map $\mathfrak  R$ is a group isomorphism between the Sheffer group $\mathbb S(V)$ and the Riordan group $\mathcal R(V)$.
\end{theorem}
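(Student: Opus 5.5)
The plan is to compute the generating function of a product of two Sheffer operators directly, and read off the group structure from the Riordan product \eqref{bvfrsxrea4qw}.

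\emph{Step 1 (the key computation).} Let $S^{(1)},S^{(2)}\in\mathbb S(V)$ with $\mathfrak R(S^{(i)})=(A^{(i)}(v),B^{(i)}(v))$. By \eqref{vgdyrdy6}, applying $S^{(1)}S^{(2)}$ to $\langle\cdot^{\otimes n},v^{\otimes n}\rangle$ means first forming $\langle S^{(2)}_n(\cdot),v^{\otimes n}\rangle=\sum_{k}\langle\cdot^{\otimes k},S^{(2)}_{kn}v^{\otimes n}\rangle$, and then replacing each monomial $\langle\cdot^{\otimes k},f_k\rangle$ by $\langle S^{(1)}_k(\cdot),f_k\rangle$. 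Summing with weights $1/n!$ and using (S6) of Theorem~\ref{ctrst5u} for $S^{(2)}$ gives
\[
\sum_{n=0}^\infty\frac1{n!}\big\langle (S^{(1)}S^{(2)})_n(\omega),v^{\otimes n}\big\rangle=\sum_{k=0}^\infty\frac1{k!}\big\langle S^{(1)}_k(\omega),B^{(2)}(v)^{\otimes k}\big\rangle A^{(2)}(v).
\]
The first factor on the right is the generating function of $S^{(1)}$ with $B^{(2)}(v)$ substituted for $v$. Since $B^{(2)}(0)=0$, this substitution is legitimate coefficientwise, exactly as the substitution of $L(v)$ in the proof of Theorem~\ref{xts5yw5yww45uwe}. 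Hence the right-hand side equals
\[
\exp\big[\langle\omega,B^{(1)}(B^{(2)}(v))\rangle\big]A^{(1)}(B^{(2)}(v))A^{(2)}(v).
\]
Therefore $S^{(1)}S^{(2)}\in\mathbb S(V)$ and $\mathfrak R(S^{(1)}S^{(2)})=\mathfrak R(S^{(1)})\ast\mathfrak R(S^{(2)})$. The main obstacle is making this rearrangement rigorous: one has to check that only finitely many terms contribute to each degree in $v$, using $B^{(2)}(v)^{\otimes k}=O(v^{\otimes k})$, and that substitution commutes with multiplication and exponentials of formal tensor power series. Concretely, I would compare the coefficients of $v^{\otimes n}$ on both sides via \eqref{vcfxdtrstea}.

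\emph{Step 2 ($\mathfrak R$ is bijective).} For injectivity, the generating function \eqref{ldj84n} determines every $S_n(\omega)$, and hence $S$ by Lemma~\ref{equation7867900}. For surjectivity, given $(A,B)\in\mathcal R(V)$, the right-hand side of \eqref{ldj84n} defines operators $S_{kn}$ through (S6). We have $S_{nn}=A_0B_1^{\otimes n}$, which is invertible because $A_0\neq0$ and $B_1$ is invertible. So $S\in\mathbb P(V)$, and it is a Sheffer operator.

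\emph{Step 3 (group structure).} Since $\mathfrak R$ is a bijective multiplicative map into the group $\mathcal R(V)$, and the identity operator $\mathbf 1$ corresponds to $(1,v)$, it follows that $\mathbb S(V)$ is closed under products. The preimage of $\mathfrak R(S)^{-1}$ is a Sheffer operator $S'$ with $\mathfrak R(SS')=\mathfrak R(S'S)=(1,v)=\mathfrak R(\mathbf 1)$, so $S'=S^{-1}$. Thus $\mathbb S(V)$ is a group and $\mathfrak R$ is an isomorphism, which proves (ii). Under $\mathfrak R$, $\mathbb A(V)$ corresponds to $\{(A,v)\}\cong\mathcal F_0(V)$ and $\mathbb B(V)$ corresponds to $\{(1,B)\}\cong\mathcal F_1(V)$. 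Hence (i) follows by transporting \eqref{cxdsresresuy}: $\mathcal F_0(V)$ is normal, $\mathcal F_1(V)$ is a subgroup, and every element factors uniquely.
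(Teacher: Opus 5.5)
Your proposal is correct and is essentially the paper's proof: the paper likewise writes $S_{kn}=\sum_{i=k}^n S^{(1)}_{ki}S^{(2)}_{in}$, applies (S6) to $S^{(2)}$, and then identifies $\sum_{i\ge k}\frac{k!}{i!}S^{(1)}_{ki}B^{(2)}(v)^{\otimes i}$ with $B^{(1)}(B^{(2)}(v))^{\otimes k}A^{(1)}(B^{(2)}(v))$ via (S6) for $S^{(1)}$. You perform the same substitution on the full generating function of $S^{(1)}$ instead, which is equivalent, and, as in your Steps 2--3, the paper then uses multiplicativity of the bijection $\mathfrak R$ to transport the group and semidirect-product structure from $\mathcal R(V)$.
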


\begin{remark}
It follows from Theorem~\ref{cxdsresay5e} that the restriction of the map $\mathfrak R$ to $\mathbb A(V)$ provides a group isomorphism between $\mathbb A(V)$ and $\mathcal F_0(V)$, and the restriction of the map $\mathfrak R$ to $\mathbb B(V)$ provides a group isomorphism between $\mathbb B(V)$ and $\mathcal F_1(V)$. (Here $\mathcal F_0(V)$ and $\mathcal F_1(V)$ are considered as subgroups of $\mathcal R(V)$.)
\end{remark}

\begin{proof}[Proof of Theorem \ref{cxdsresay5e}] It follows from the definition of a Sheffer operator that the map  $\mathfrak R:\mathbb S(V) \to \mathcal R(V)$ is indeed bijective, and furthermore $\mathfrak R\,\mathbb A(V)=\mathcal F_0(V)$ and $\mathfrak R\,\mathbb B(V)=\mathcal F_1(V)$. 

For $i=1,2$, let $S^{(i)}=[S_{kn}^{(i)}]_{k,n\in\mathbb N_0}\in \mathbb S(V)$ and $\mathfrak R(S^{(i)})=(A^{(i)}(v),B^{(i)}(v))$. Let $S=[S_{kn}]_{k,n\in\mathbb N_0}:=S^{(1)}S^{(2)}$, so that 
$S_{kn}=\sum_{i=k}^nS_{ki}^{(1)}S_{in}^{(2)}$. Then, for each $k\in \mathbb N$, we have, by Theorem~\ref{ctrst5u} (S6),
\begin{align*}
&\sum_{n=k}^\infty \frac{k!}{n!}S_{kn}v^{\otimes n}=\sum_{n=k}^\infty \frac{k!}{n!}\sum_{i=k}^nS_{ki}^{(1)}S_{in}^{(2)}v^{\otimes n}=\sum_{i=k}^\infty \sum_{n=i}^\infty\frac{k!}{n!}\,S_{ki}^{(1)}S_{in}^{(2)}v^{\otimes n}\\
&\quad=\sum_{i=k}^\infty \frac{k!}{i!}\,S_{ki}^{(1)}\sum_{n=i}^\infty \frac{i!}{n!}S_{in}^{(2)}v^{\otimes n}=\sum_{i=k}^\infty \frac{k!}{i!}\,S_{ki}^{(1)}B^{(2)}(v)^{\otimes i}A^{(2)}(v)\\
&\quad =\bigg(\sum_{i=k}^\infty \frac{k!}{i!}\,S_{ki}^{(1)}B^{(2)}(v)^{\otimes i}\bigg)A^{(2)}(v)=B^{(1)}(B^{(2)}(v))^{\otimes k}A^{(1)}(B^{(2)}(v))A^{(2)}(v)).\end{align*}
Therefore, $S\in\mathbb S(V)$ and $\mathfrak R(S)=(A(v),B(v))$, where
$A(v)=A^{(1)}(B^{(2)}(v))A^{(2)}(v)$ and $B(v)=B^{(1)}(v)\circ B^{(2)}(v)$. Hence, by \eqref{bvfrsxrea4qw}, $\mathfrak R(S^{(1)}S^{(2)})=\mathfrak R(S^{(1)})\ast\mathfrak R(S^{(2)})$. 
 Now, both statements of the theorem follow immediately from the fact that $\mathcal R(V)$ is a group and formula~\eqref{cxdsresresuy} holds.
\end{proof}

\section{Lifting of Sheffer sequences over $\mathbb F$  to Sheffer\\ sequences over $V$}\label{gxzrfsarh}

\subsection{General theory}\label{87ygfc5rddf65}
 
 Our aim in this section is to lift the set of all Sheffer sequences over $\mathbb F$ to a set of Sheffer sequences over $V$. To this end, we assume that $V$ is an associative algebra over $\mathbb F$. We denote by $v\diamond w$ the product of $v$ and $w$ from $V$ in this algebra. Note that the $\diamond$ product is not assumed to be commutative.

For each $n\geq 2$, we define a linear map $\mathbb D_n\in\mathcal L(V^{\otimes n}, V)$ by 
$$\mathbb D_n(v_1\otimes v_2\otimes \cdots\otimes v_n):=v_1\diamond v_2\diamond \cdots \diamond v_n,\quad v_1,\dots,v_n\in V.$$
By taking the restriction of $\mathbb D_n$ to $V^{\odot n}$, we get a map $\mathbb D_n\in\mathcal L(V^{\odot n}, V)$. 
We have $$\mathbb D_n(v_1\odot v_2\odot\cdots\odot v_n)=\frac 1{n!}\sum_{\pi\in S_n}v_{\pi_{(1)}}\diamond v_{\pi_{(2)}}\diamond \dotsm \diamond v_{\pi_{(n)}},\quad v_1,\dots,v_n\in V.$$
Note that, if the $\diamond$ product is commutative, then
$$\mathbb D_n(v_1\odot v_2\odot\cdots\odot v_n)=v_1\diamond v_2\diamond \cdots\diamond v_n,\quad v_1,\dots,v_n\in V.$$
In any case, $\mathbb D_nv^{\otimes n}=v^{\diamond n}$ for $v\in V$. 
Let also $\mathbb D_1$ denote the identity operator in $V$. 
We also fix a linear functional $\Upsilon\in V^*$.

Consider formal power series $a(t)=\sum_{k=1}^\infty a_kt^k$, $b(t)=\sum_{k=1}^\infty b_kt^k$ from $ \mathcal F(\mathbb F,\mathbb F)$ that satisfy $a(0)=b(0)=0$ and $b'(0)=b_1\ne0$. Let $(s_n(x))_{n=0}^\infty$ denote the Sheffer sequence  over~$\mathbb F$ that has the generating function 
$\sum_{n=0}^\infty \frac{t^n}{n!}s_n(x)=\exp\big[xb(t)+a(t)\big]$.

For each $n\in \mathbb N$, we define $\alpha_n\in (V^{\odot n})^*$ and $B_n\in \mathcal L(V^{\odot n},V)$  by
$\alpha_n:=a_n \Upsilon \mathbb D_n$ and $B_n:=b_n\mathbb D_n$. Note that $B_1=b_1\mathbf 1$ with $b_1\ne0$, hence the operator $B_1$ is invertible.  
Consider the   formal tensor power series 
\begin{align}
\alpha(v)&:=\sum_{k=1}^\infty \langle \alpha_k,v^{\otimes k}\rangle
=\sum_{k=1}^\infty a_k \langle \Upsilon, v^{\diamond k}\rangle=\bigg\langle\Upsilon,\sum_{k=1}^\infty a_k v^{\diamond k}\bigg\rangle,\notag\\
B(v)&:=\sum_{k=1}^\infty B_kv^{\otimes k}=\sum_{k=1}^\infty b_k v^{\diamond k}.\label{vxdxdzxts}
\end{align}

We will say that a {\it Sheffer sequence $(S_n(\omega))_{n=0}^\infty$ over $V$ is a lifting of the Sheffer sequence $(s_n(x))_{n=0}^\infty$} if the generating function of $(S_n(\omega))_{n=0}^\infty$ is given by formula  \eqref{fgh784gh74} in which $\alpha(v)$ and $B(v)$ are defined by \eqref{vxdxdzxts}. Below, we assume that $(S_n(\omega))_{n=0}^\infty$ is such a lifted Sheffer sequence. 

Let $l(t)=\sum_{k=1}^\infty l_kt^k$ be the compositional inverse of the formal power series $b(t)$. Recall that $L(v)=\sum_{k=1}^\infty L_kv^{\otimes k}=B^{\langle-1\rangle}(v)$. It is easy to check that, for each $k\in\mathbb N$, $L_k=l_k\mathbb D_k$, i.e.,
\begin{equation}\label{xresare5w6u}
L(v)=\sum_{k=1}^\infty l_kv^{\diamond k}. 
\end{equation}

For $k\in\mathbb N$, we define $\nabla^{k}\in\mathcal L\big(\mathcal P(V^*),\mathcal P(V^*,V)\big)$ by $(\nabla^{k}p)(\omega):=\mathbb D_k(D^{k}p)(\omega)$ for $p\in\mathcal P(V^*)$. Thus, 
\begin{equation}\label{xese4a43q4a}
\big(\nabla^{k}\langle \cdot^{\otimes n}, v^{\otimes n}\rangle\big)(\omega)=(n)_k\,v^{\diamond k}\langle \omega^{\otimes(n-k)},v^{\otimes(n-k)}\rangle.
\end{equation}
For a formal power series $d(t)=\sum_{k=1}^\infty d_kt^k\in\mathcal F(\mathbb F,\mathbb F)$ with $d(0)=0$, we define a linear operator $d(\nabla)\in\mathcal L\big(\mathcal P(V^*),\mathcal P(V^*,V)\big)$ by $d(\nabla):=\sum_{k=1}^\infty d_k\nabla^k$.

By Theorem \ref{ctrst5u} (S3), Theorem \ref{45678765g} (B3), and formulas \eqref{xresare5w6u}, \eqref{xese4a43q4a}, we conclude that the lowering differential  $Q$ for the Sheffer sequence $(S_n(\omega))_{n=0}^\infty$ has  the representation $Q=l(\nabla)$.

Denote $\tilde a_k:=k!\,a_k$, $\tilde b_k:=k!\,b_k$. Then, by formula \eqref{dxtststs},  
$$\langle S_n(\omega),v^{\otimes n}\rangle=\sum_{\pi\in\mathcal P(n)}\prod_{\mathcal A\in\pi}\big(\tilde b_{|\mathcal A|}\langle\omega,  v^{\diamond|\mathcal A|}\rangle+\tilde a_{|\mathcal A|}\langle \Upsilon,v^{\diamond|\mathcal A|}\rangle\big).$$

\begin{proposition}\label{dfgh37892g2} Let $v$ be an idempotent element of the algebra $V$, i.e., $v^{\diamond2}=v$. Let $(\hat s_n)_{n=0}^\infty$ be the Sheffer sequence over $\mathbb F$ that has the generating function
$\sum_{n=0}^\infty \frac{t^n}{n!}\,\hat s_n(x)=\exp\big[xb(t)+\langle \Upsilon, v\rangle a(t)\big]$.
Then, for all $\omega\in V^*$, $\langle S_n(\omega),v^{\otimes n}\rangle=\hat s_n(\langle \omega,v\rangle)$. 
In particular, if $\langle \Upsilon,v\rangle=1$, then 
$\langle S_n(\omega),v^{\otimes n}\rangle=s_n(\langle\omega,v\rangle)$.
\end{proposition}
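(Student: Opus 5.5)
The plan is to compare generating functions along the ray $tv$, $t\in\mathbb F$. Fix $\omega\in V^*$ and the idempotent $v$. By the definition of a lifted Sheffer sequence, the generating function \eqref{fgh784gh74} holds with $\alpha(v)$ and $B(v)$ as in \eqref{vxdxdzxts}. Substituting $tv$ for $v$ turns both sides into formal power series in $t$ with coefficients in $\mathbb F$:
\[
\sum_{n=0}^\infty \frac{t^n}{n!}\,\langle S_n(\omega),v^{\otimes n}\rangle=\exp\Big[\sum_{k=1}^\infty t^k\big(b_k\langle\omega,v^{\diamond k}\rangle+a_k\langle\Upsilon,v^{\diamond k}\rangle\big)\Big].
\]
This is justified exactly as in the proof of (B4)$\Rightarrow$(B5) in Theorem~\ref{45678765g}, where fixing $v$ identifies a formal tensor power series with a formal power series in $t$.

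Next I use idempotence. From $v\diamond v=v$ and associativity, induction gives $v^{\diamond k}=v$ for all $k\in\mathbb N$. Hence the exponent becomes $\langle\omega,v\rangle\sum_k b_kt^k+\langle\Upsilon,v\rangle\sum_k a_kt^k=x\,b(t)+\langle\Upsilon,v\rangle a(t)$ with $x:=\langle\omega,v\rangle$. This is precisely the generating function defining $(\hat s_n(x))_{n=0}^\infty$.

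Comparing coefficients of $t^n$ in the two formal power series then gives $\langle S_n(\omega),v^{\otimes n}\rangle=\hat s_n(\langle\omega,v\rangle)$. The composition of $\exp$ with a series without constant term is well defined and unique, so this comparison is legitimate. For the particular case, if $\langle\Upsilon,v\rangle=1$ the generating function of $\hat s_n$ coincides with that of $s_n$, so $\hat s_n=s_n$.

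An alternative route is the partition formula displayed just before the proposition. Setting $v^{\diamond|\mathcal A|}=v$ there gives $\langle S_n(\omega),v^{\otimes n}\rangle=\sum_{\pi\in\mathcal P(n)}\prod_{\mathcal A\in\pi}\big(\tilde b_{|\mathcal A|}x+\tilde a_{|\mathcal A|}\langle\Upsilon,v\rangle\big)$. By the one-dimensional case of Corollary~\ref{dxzdrzdszrsxddx}, this equals $\hat s_n(x)$. There is no real obstacle in either route; the only point requiring care is the identification of the tensor series evaluated at $tv$ with an ordinary formal power series in $t$.
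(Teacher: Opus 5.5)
Your proposal is correct and follows the paper's own proof: idempotence gives $v^{\diamond k}=v$, so $B(tv)=v\,b(t)$ and $\alpha(tv)=\langle\Upsilon,v\rangle a(t)$, and comparing the coefficients of $t^n$ in the generating function along the ray $tv$ yields the claim. Your alternative route via the set-partition formula and the one-dimensional case of Corollary~\ref{dxzdrzdszrsxddx} is also valid.
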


\begin{proof} Since $v$ is idempotent, $v^{\diamond k}=v$ for all $k\geq2$. Hence, for $t\in\mathbb F$, formula \eqref{vxdxdzxts} implies $\alpha(tv)=\langle \Upsilon,v\rangle a(t)$ and $B(tv)=vb(t)$. Therefore,
\begin{align*}\sum_{n=0}^\infty\frac{t^n}{n!}\langle S_n(\omega),v^{\otimes n}\rangle
&=\sum_{n=0}^\infty\frac1{n!}\langle S_n(\omega),(tv)^{\otimes n)}\rangle=\exp\big[\langle\omega,v\rangle b(t)+\langle \Upsilon,v\rangle a(t)\big]\\
&=\sum_{n=0}^\infty\frac{t^n}{n!} \hat s_n(\langle\omega,v\rangle),\end{align*}
which implies the proposition. 
\end{proof}

\begin{corollary} \label{783eyeh32} Let $e_1,\dots,e_K\in V$ be such that $e_i\diamond e_j=\delta_{i,j}\,e_i$ and $\langle\Upsilon,e_i\rangle=1$ for all $i,j\in\{1,\dots,K\}$. Here $\delta_{i,j}$ is the Kronecker delta.  Then, for any $l_1,\dots,l_K\in \mathbb N_0$, $l_1+\cdots+l_K=n$, we have
\begin{equation}\label{567yhg7}\langle S_n(\omega),e_1^{\odot l_1}\odot\cdots\odot e_K^{\odot l_K}\rangle=s_{l_1}(\langle\omega,e_1\rangle)\cdots s_{l_K}(\langle\omega,e_K\rangle),\end{equation}
and for any $c_1,\dots, c_K\in \mathbb F$, we have
\begin{align}
&\big\langle S_n(\omega),(c_1e_1+\cdots+c_Ke_K)^{\otimes n}\big\rangle\notag\\
&\quad=\sum_{{l_1,\dots,l_K\in \mathbb N_0}\atop{l_1+\cdots+l_K=n}}{n\choose{ l_1\cdots l_K}}c_1^{l_1}\cdots c_K^{l_K}\, s_{l_1}(\langle\omega,e_1\rangle)\cdots s_{l_K}(\langle\omega,e_K\rangle).\label{vcfdstrjsj6u}\end{align}
\end{corollary}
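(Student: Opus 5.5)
We will derive \eqref{567yhg7} from the factorization property (Corollary~\ref{stesw5twu5}) combined with Proposition~\ref{dfgh37892g2}, and then obtain \eqref{vcfdstrjsj6u} by expanding the symmetric tensor power multinomially.

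\textbf{Step 1: checking the hypotheses of Corollary~\ref{stesw5twu5}.} Take the vectors $e_1,\dots,e_K$, with $B_k=b_k\mathbb D_k$ and $\alpha_k=a_k\Upsilon\mathbb D_k$. Let $l_1,\dots,l_K\in\mathbb N_0$ with at least two indices $i\ne j$ such that $l_i,l_j\ge1$. By the definition of the symmetric tensor product, $e_1^{\otimes l_1}\odot\cdots\odot e_K^{\otimes l_K}$ is an average of tensor products of $n=l_1+\dots+l_K$ factors, each factor being some $e_m$. Every such word contains two distinct vectors $e_i$ and $e_j$. Hence somewhere in the word two adjacent factors are distinct, and their product $e_p\diamond e_q$ with $p\ne q$ vanishes. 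So $\mathbb D_n$ kills every term, because $\diamond$ is associative. Consequently both $B_n(\cdots)=0$ and $\langle\alpha_n,\cdots\rangle=0$. Corollary~\ref{stesw5twu5} then gives
\[
\langle S_n(\omega),e_1^{\otimes l_1}\odot\cdots\odot e_K^{\otimes l_K}\rangle=\prod_{i=1}^K\langle S_{l_i}(\omega),e_i^{\otimes l_i}\rangle .
\]
Here we read the corollary's statement with the evident typo corrected, that is, with $v_i$ in place of $v$. The case $l_i=0$ is handled by the convention $e_i^{\otimes 0}=1$, together with $S_0=1$, which holds because $A(0)=1$.

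\textbf{Step 2: evaluating each factor.} Each $e_i$ is idempotent, since $e_i\diamond e_i=e_i$, and satisfies $\langle\Upsilon,e_i\rangle=1$. The last assertion of Proposition~\ref{dfgh37892g2} therefore gives $\langle S_{l_i}(\omega),e_i^{\otimes l_i}\rangle=s_{l_i}(\langle\omega,e_i\rangle)$. This proves \eqref{567yhg7}.

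\textbf{Step 3: the multinomial expansion.} Since $\odot$ is commutative and multilinear, $(c_1e_1+\cdots+c_Ke_K)^{\otimes n}$, viewed in $V^{\odot n}$, equals
\[
\sum_{l_1+\dots+l_K=n}\binom{n}{l_1\cdots l_K}c_1^{l_1}\cdots c_K^{l_K}\,e_1^{\odot l_1}\odot\cdots\odot e_K^{\odot l_K}.
\]
Pairing this with $S_n(\omega)$ and applying \eqref{567yhg7} yields \eqref{vcfdstrjsj6u}.

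The main point to handle carefully is Step~1. One must check that the vanishing hypothesis of Corollary~\ref{stesw5twu5} really holds under symmetrization, even though $\diamond$ need not be commutative. The adjacent-distinct-factors argument settles this without any commutativity assumption.
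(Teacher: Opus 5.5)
Your proposal is correct and follows the paper's own proof: it obtains \eqref{567yhg7} from Corollary~\ref{stesw5twu5} (reading $v_i$ for the typo $v$) together with Proposition~\ref{dfgh37892g2}, and then gets \eqref{vcfdstrjsj6u} from the multinomial theorem. Your check that $\mathbb D_n$ annihilates $e_1^{\otimes l_1}\odot\cdots\odot e_K^{\otimes l_K}$ via two adjacent distinct factors, using only associativity, is exactly the verification the paper leaves implicit.
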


\begin{proof} Formula \eqref{567yhg7} follows immediately  from Corollary~\ref{stesw5twu5} and Proposition \ref{dfgh37892g2}. Formula~ \eqref{567yhg7}  and the multinomial theorem imply formula~\eqref{vcfdstrjsj6u}. 
\end{proof}

We define the following formal power series:
\begin{align*}
\phi(t):=&b'(l(t))=\frac{1}{l'(t)}=b_1+\sum_{k=1}^\infty \phi_kt^k,\quad
\gamma(t):=a'(l(t))=a_1+\sum_{k=1}^\infty \gamma_kt^k,\\
\psi(t):=&l'(b(t))=\frac1{b'(t)}=l_1+\sum_{k=1}^\infty \psi_kt^k,\quad \delta(t):=-a'(t)\psi(t)=\sum_{k=0}^\infty\delta_kt^k.
\end{align*} 
We also define $\widehat\phi(t):=\phi(t)-b_1=\sum_{k=1}^\infty \phi_kt^k$ and $\widehat\gamma(t):=\gamma(t)-a_1=\sum_{k=1}^\infty \gamma_kt^k$.

\begin{proposition}\label{bdtyjfdese} 
{\rm (i)} For each $\zeta\in V$,  the  raising operator $R(\zeta)$  for the lifted Sheffer sequence $(S_n(\omega))_{n=0}^\infty$ acts as follows:
\begin{align}
&\big(R(\zeta)p\big)(\omega)=
\big(b_1\langle\omega,\zeta\rangle +a_1\langle\Upsilon,\zeta\rangle\big)p(\omega)\notag\\
&\quad+\langle\omega, \zeta\diamond (\widehat\phi(\nabla)p)(\omega)\big\rangle+ \big\langle\Upsilon,\zeta\diamond(\widehat\gamma(\nabla)p)(\omega)\big\rangle, \quad p\in\mathcal P(V^*),\ \omega\in V^*.\label{fxdtsxtsts}
\end{align}

{\rm (ii)} For each $\zeta, v\in V$ and $n\in\mathbb N_0$,
 \begin{align}
 &\langle\omega,\zeta\rangle\langle S_n(\omega),v^{\otimes n}\rangle=\langle S_{n+1}(\omega),v^{\otimes n}\odot\zeta\rangle\notag\\
&
 +\sum_{m=1}^{n}\Big\langle S_m(\omega), \psi_{n-m+1}(n)_{n-m+1} v^{\otimes(m-1)}\odot (\zeta\diamond v^{\diamond(n-m+1)})\notag\\
 &\qquad\quad+\delta_{n-m}(n)_{n-m}v^{\otimes m}\langle \Upsilon,\zeta\diamond v^{\diamond(n-m)}\rangle\Big\rangle+\delta_n\, n!\,\langle\Upsilon,\zeta\diamond v^{\diamond n}\rangle.\label{cxeyft7f76ui8uytr}
 \end{align}
\end{proposition}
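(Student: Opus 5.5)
Both parts are specializations of the general recurrence results: part (i) of Theorem~\ref{dzerste7kui7} and part (ii) of Theorem~\ref{xts5yw5yww45uwe}. The plan is to compute the formal tensor power series $\Phi_\zeta$, $\Gamma_\zeta$, $\Psi_\zeta$, $\Delta_\zeta$ explicitly for the lifted data \eqref{vxdxdzxts}, using \eqref{xresare5w6u}. The key algebraic fact concerns the directional derivatives. By \eqref{cxtsteyew46u}, $B'(v;\zeta)=\sum_{k\ge1} k b_k\mathbb D_k(v^{\otimes(k-1)}\odot\zeta)$. Here I must be careful, because $\mathbb D_k(v^{\otimes (k-1)}\odot\zeta)$ is the symmetrized product: it equals $\frac1k\sum_{j}v^{\diamond j}\diamond\zeta\diamond v^{\diamond(k-1-j)}$. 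Thus $B'(v;\zeta)=\zeta\diamond b'(v)$ only when $\zeta$ commutes with $v$.

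The statement as written, with $\zeta\diamond(\widehat\phi(\nabla)p)(\omega)$, suggests the authors intend either a commutative algebra or a computation that only needs the composite series evaluated through $\nabla$. I would first check whether, after substituting $L(v)=l(v)$ (a power series in $v$ alone, so commuting with $v$) and pairing with $\nabla^k$, the positions of $\zeta$ matter. In the noncommutative case they appear to, so I would proceed as follows. Treat $\Phi_\zeta(v)=B'(L(v);\zeta)$. If $\diamond$ is commutative, then $B'(w;\zeta)=\zeta\diamond b'(w)$ with $b'(w):=\sum k b_k w^{\diamond(k-1)}$ (and $w^{\diamond 0}$ interpreted as the unit, absorbed by writing $b_1\zeta$ separately). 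Hence $\Phi_\zeta(v)=b_1\zeta+\zeta\diamond\widehat\phi(v)$, where $\widehat\phi(v)=\sum_{k\ge1}\phi_k v^{\diamond k}$. This uses the scalar identity $b'(l(t))=\phi(t)$ transported via the fact that power series in a single element $v$ of an associative algebra form a commutative algebra, so scalar composition identities hold verbatim. Similarly $\Gamma_\zeta(v)=a_1\langle\Upsilon,\zeta\rangle+\langle\Upsilon,\zeta\diamond\widehat\gamma(v)\rangle$.

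Next I would translate to operators. For a series $\sum_k G_k v^{\otimes k}$ with $G_k f=\phi_k\,\zeta\diamond\mathbb D_k f$, the operator $\sum G_kD^k$ is $\zeta\diamond(\widehat\phi(\nabla)p)(\omega)$ by the definition of $\nabla^k$. Plugging this into \eqref{dtrd6t5swe5u4wu5} and splitting off the constant terms $b_1\zeta$, $a_1\langle\Upsilon,\zeta\rangle$ (which give the multiplication term) yields \eqref{fxdtsxtsts}.

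For (ii), I would compute $\Psi_\zeta(v)=L'(B(v);\zeta)=\zeta\diamond\psi(v)$, giving $\Psi_{\zeta,k}v^{\otimes k}=\psi_k\,\zeta\diamond v^{\diamond k}$ with $\Psi_{\zeta,0}=l_1\zeta$. Then $\Delta_\zeta(v)=-\alpha'(v;\Psi_\zeta(v))=-\langle\Upsilon, a'(v)\diamond\zeta\diamond\psi(v)\rangle$, which under commutativity becomes $\langle\Upsilon,\zeta\diamond\delta(v)\rangle$, so that $\langle\Delta_{\zeta,k},v^{\otimes k}\rangle=\delta_k\langle\Upsilon,\zeta\diamond v^{\diamond k}\rangle$. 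Substituting into \eqref{cxesa5yw7ypui8uytr} gives \eqref{cxeyft7f76ui8uytr}. One caveat: Theorem~\ref{xts5yw5yww45uwe} assumes a monic sequence, i.e.\ $l_1=1/b_1=1$, while the formula keeps $\psi_k$ general. I would note that the proof of that theorem never used monicity, only \eqref{fgh784gh74}, so it applies as is; the leading term $\langle S_{n+1},v^{\otimes n}\odot\Psi_{\zeta,0}\rangle$ then carries $l_1$, which should match the statement, or else $b_1=1$ is implicitly assumed.

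The main obstacle is the noncommutativity: placing $\zeta$ on the left of $v^{\diamond k}$ requires either commutativity of $\diamond$ or the observation that only symmetrized expressions survive. I would try first to show that the identity holds after pairing with $v^{\otimes n}$ and symmetrizing. If that fails, I would add the commutativity hypothesis.
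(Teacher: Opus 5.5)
Your route is the paper's own. Both parts come from specializing Theorem~\ref{dzerste7kui7} and Theorem~\ref{xts5yw5yww45uwe} to the lifted data. The paper does (i) coefficientwise via \eqref{vcryds6uei} on monomials $\langle\cdot^{\otimes m},v^{\otimes m}\rangle$, which is just your series identity $b'(l(t))-b_1=\widehat\phi(t)$ written out. Under the extra hypotheses you flag, your argument is complete.

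\textbf{Commutativity is genuinely needed.} The branch you want to try first, a symmetrization rescue, cannot work. Take $p=\langle\cdot,v\rangle=b_1^{-1}\big(\langle S_1(\cdot),v\rangle-a_1\langle\Upsilon,v\rangle\big)$. From the generating function,
\[
\langle S_2(\omega),v\odot\zeta\rangle=b_2\langle\omega,v\diamond\zeta+\zeta\diamond v\rangle+a_2\langle\Upsilon,v\diamond\zeta+\zeta\diamond v\rangle+\langle S_1(\omega),v\rangle\langle S_1(\omega),\zeta\rangle .
\]
Hence the true $(R(\zeta)p)(\omega)$ minus the right-hand side of \eqref{fxdtsxtsts} (where $\phi_1=2b_2/b_1$ and $\gamma_1=2a_2/b_1$) equals $b_1^{-1}\big(b_2\langle\omega,v\diamond\zeta-\zeta\diamond v\rangle+a_2\langle\Upsilon,v\diamond\zeta-\zeta\diamond v\rangle\big)$. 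This is affine in the arbitrary $\omega$, with nonzero linear part whenever $b_2\neq0$ and $v\diamond\zeta\neq\zeta\diamond v$ (e.g.\ in $\mathcal M_2(\mathbb F)$). So no pairing or symmetrization argument removes it. The same commutator term appears in \eqref{cxeyft7f76ui8uytr} already at $n=1$ (with $b_1=1$).

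You must therefore assume that $\zeta$ commutes with $v$, e.g.\ that $\diamond$ is commutative. For a noncommutative $\diamond$, as the paper allows, the proposition is false. The paper's proof uses commutativity silently at two points:
\begin{itemize}
\item in (i), when it replaces $\mathbb D_{n+1}(\zeta\odot v^{\diamond i_1}\odot\cdots\odot v^{\diamond i_n})$ by $\zeta\diamond v^{\diamond k}$;
\item in (ii), when it asserts $\Psi_{\zeta,k}v^{\otimes k}=\psi_k\,v^{\diamond k}\diamond\zeta$ and then moves $\zeta$ to the left.
\end{itemize}

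\textbf{Part (ii) also needs $b_1=1$.} As printed, (ii) holds only in the monic case of Theorem~\ref{xts5yw5yww45uwe}: at $n=0$ it reads $\langle\omega,\zeta\rangle=b_1\langle\omega,\zeta\rangle+a_1(1-b_1^{-1})\langle\Upsilon,\zeta\rangle$. Your remark that the proof of Theorem~\ref{xts5yw5yww45uwe} uses only \eqref{fgh784gh74} is correct. For general $b_1$ the leading term becomes $l_1\langle S_{n+1}(\omega),v^{\otimes n}\odot\zeta\rangle$ and nothing else changes, since $\delta_0=-a_1l_1$ already carries the factor.

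Finally, because $V$ need not have a unit, the $k=0$ terms ($b_1\zeta$, $a_1\langle\Upsilon,\zeta\rangle$, $l_1\zeta$, $\delta_0\langle\Upsilon,\zeta\rangle$) must be handled separately, as you already do. With that bookkeeping and the two hypotheses stated explicitly, your proof goes through.
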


\begin{proof} 
(i) By \eqref{vcryds6uei}, formula \eqref{fxdtsxtsts} obviously holds when $p(\omega)$ is a constant. 
Let $m\in\mathbb N$, $\zeta,v\in V$ and $p(\omega)=\langle\omega^{\otimes m},v^{\otimes m}\rangle$. Then, by formula~\eqref{vcryds6uei}, we obtain
\begin{align*}
&(R(\zeta)p)(\omega)-\big(b_1\langle\omega,\zeta\rangle +a_1\langle\Upsilon,\zeta\rangle\big)p(\omega)\\
&=\sum_{k=1}^m\sum_{n=1}^k(n+1) \sum_{i_1,\dots,i_n \geq 1 \atop i_1+\dots+i_n=k} (m)_k\,
l_{i_1}\dotsm l_{i_n}\\
&\quad\times
\big\langle \omega b_{n+1}\mathbb D_{n+1}+a_{n+1}\Upsilon\mathbb D_{n+1}, \zeta\odot v^{\diamond l_{i_1}}\odot\dotsm\odot v^{\diamond l_{i_n}}    \big\rangle\langle\omega^{\otimes(m-k)},v^{\otimes(m-k)}\rangle\\
& =\sum_{k=1}^m\sum_{n=1}^k(n+1) \sum_{i_1,\dots,i_n \geq 1 \atop i_1+\dots+i_n=k} (m)_k\,
l_{i_1}\dotsm l_{i_n}
\big\langle b_{n+1}\omega +a_{n+1}\Upsilon, \zeta\diamond  v^{\diamond k}    \big\rangle\langle\omega^{\otimes(m-k)},v^{\otimes(m-k)}\rangle\\
& =\sum_{k=1}^m\sum_{n=1}^k(n+1) \sum_{i_1,\dots,i_n \geq 1 \atop i_1+\dots+i_n=k} 
l_{i_1}\dotsm l_{i_n}
\big( b_{n+1}\langle \omega,\zeta\diamond(\nabla^kp)(\omega)\rangle+a_{n+1}\langle\Upsilon,\zeta\diamond(\nabla^kp)(\omega)\rangle\big)\\
&=\langle\omega, \zeta\diamond (\widehat\phi(\nabla)p)(\omega)\big\rangle+ \big\langle\Upsilon,\zeta\diamond(\widehat\gamma(\nabla)p)(\omega)\big\rangle.
\end{align*}

(ii) We easily check that, for any $\zeta,v\in V$, we have 
\begin{equation}\Psi_{\zeta,k}v^{\otimes k}=\psi_k\, v^{\diamond k}\diamond \zeta,\quad
\langle \Delta_{\zeta,k},v^{\otimes k}\rangle=\delta_k\langle\Upsilon, v^{\diamond k}\diamond \zeta\rangle. 
\label{cfxfdzzre}
\end{equation}
Formulas \eqref{cxesa5yw7ypui8uytr} and \eqref{cfxfdzzre} imply \eqref{cxeyft7f76ui8uytr}. 
\end{proof}

\subsection{Examples}

We will now discuss several examples of the constructions in  Section \ref{87ygfc5rddf65}. 

\begin{example}\label{vctstszzz}
Assume that a vector space $V$ has a basis $(e_k)_{k\geq1}$, i.e., each $v\in V$ can be uniquely written as $v=\sum_{k=1}^K c_ke_k$  for some $K\in\mathbb N$. Then formula 
$e_k\diamond e_l:=\delta_{k,l}e_k$ ($k,l\ge1$) uniquely determines an associative product in $V$.  It is natural to call this product in $V$ the {\it Hadamard product}.
Also the formula $\langle \Upsilon,e_k\rangle:=1$ for all $k\ge1$ uniquely determines a linear functional $\Upsilon\in V^*$. Then formula~\eqref{vcfdstrjsj6u} provides a description of each Sheffer polynomial $\langle S_n(\omega),v^{\otimes n}\rangle$  ($v\in V$) through the Sheffer sequence $(s_n)_{n=0}^\infty$.

\end{example}

\begin{example} This is a special case of Example \ref{vctstszzz}. Let $V=\mathbb F^K$ where $K\in\mathbb N$, $K\ge2$. For $k=1,\dots, K$, we define $e_k:=(0,\dots,0,1,0,\dots)$,  where $1$ is at the $k$th place. Thus, for each $x=(x_1,\dots,x_K)\in \mathbb F^k$, we have $x=\sum_{k=1}^K x_ke_k$. Then $V^*$ can be naturally identified with $\mathbb F^K$: for each $\omega=(\omega_1,\dots,\omega_K)\in V^*=\mathbb F^K$, we have
$\langle \omega,x\rangle=\sum_{k=1}^K\omega_kx_k$.
Thus, by Corollary~\ref{783eyeh32}, for $\omega=(\omega_1,\dots,\omega_K)\in \mathbb F^K$ and any $l_1,\dots,l_K\in \mathbb N_0$, $l_1+\cdots+l_K=n$, we get 
\begin{equation*}\langle S_n(\omega),e_1^{\otimes l_1}\odot\cdots\odot e_K^{\otimes l_K}\rangle=s_{l_1}(\omega_1)\cdots s_{l_K}(\omega_K).\end{equation*}
\end{example}

\begin{example}\label{987yt34rtyhj98ufd} This is again a special case of Example \ref{vctstszzz}. Let $V=\mathbb F_{\mathrm{fin}}^\infty$,
 be the vector space of all infinite sequences $x=(x_k)_{k=1}^\infty$ ($x_k\in \mathbb F$  for all $ k\in \mathbb N$) such that there exists $K\in \mathbb N$ (depending on $x$) for which  $x_k=0$   for all $k\geq K+1$. Then $V^*=\mathbb F^\infty$, the vector space of all infinite sequences $\omega=(\omega_k)_{k=1}^\infty$ with $\omega_k\in \mathbb F$. The dual pairing between $\omega=(\omega_k)_{k=1}^\infty\in\mathbb F^\infty$ and $x=(x_k)_{k=1}^\infty\in \mathbb F_{\mathrm{fin}}^\infty$ is given by
$\langle\omega,x\rangle:=\sum_{k=1}^\infty \omega_kx_k$. (Note that the series in this formula 
has only a finite number of non-zero terms.) For $k\in\mathbb N$, denote $e_k:=(0,\dots,0,1,0,\dots)$, where $1$ is at the $k$th place. Then $(e_k)_{k=1}^\infty$ forms a basis for $\mathbb F_{\mathrm{fin}}^\infty$. 
For $x=(x_k)_{k=1}^\infty$ and $y=(y_k)_{k=1}^\infty$ from $\mathbb F_{\mathrm{fin}}^\infty$, 
the Hadamard product of $x$ and $y$ has the form  $x\diamond y=(x_ky_k)_{k=1}^\infty$. We also have $\langle \Upsilon,x\rangle=\sum_{k=1}^\infty x_k$ (again the series has only a finite number of non-zero terms). Thus, by Corollary~\ref{783eyeh32}, we get, for each $\omega=(\omega_k)_{k=1}^\infty\in \mathbb F^\infty$, $K\in\mathbb N$, $l_1,\dots,l_K\in \mathbb N_0,\, l_1+\cdots+l_K=n$,
\begin{equation*}\langle S_n(\omega),e_1^{\odot l_1}\odot e_2^{\odot l_2}\odot\cdots\odot e_K^{\odot l_K}\rangle=s_{l_1}(\omega_1)s_{l_2}(\omega_2)\cdots s_{l_K}(\omega_K).\end{equation*}
\end{example}

\begin{example}\label{cxzfrzrzrs} Let $V=\mathcal D(\mathbb R^d)=C_0^{\infty}(\mathbb R^d)$ be the vector space of all real-valued smooth functions on $\mathbb R^d$ with compact support. Note that $\mathcal D(\mathbb R^d)^*$ includes $\mathcal D(\mathbb R^d)'$, the topological dual of the space $\mathcal D(\mathbb R^d)$. The $\mathcal D(\mathbb R^d)$ is a commutative associative algebra for the point-wise product of functions: $(f\diamond g)(x):=f(x)g(x)$. We also define $\Upsilon\in \mathcal D(\mathbb R^d)'\subset \mathcal D(\mathbb R^d)^*$ by $\langle \Upsilon,f\rangle:=\int_{\mathbb R^d}f(x)\,dx$ for $f\in \mathcal D(\mathbb R^d)$.
As easily seen, the generating function of the lifted Sheffer sequence has the following form: for $\omega\in \mathcal D(\mathbb R^d)^*$ and $f\in \mathcal D(\mathbb R^d)$, 
\begin{equation}\label{dfghjk0987654dcv}
\sum_{n=1}^\infty \frac1{n!}\langle S_n(\omega),f^{\otimes n}\rangle=
\exp\bigg[\langle \omega,b(f)\rangle+\int_{\mathbb R}a(f(x))\,dx\bigg].\end{equation}
The reader is advised to compare this example with \cite[Section~7]{FKLO}.
\end{example}

\begin{example}\label{fxdtsts6iu} Similarly to Example~\ref{cxzfrzrzrs}, we may consider $V=\mathcal S(\mathbb R^d)$, the Schwartz space of real-valued smooth rapidly decreasing functions on $\mathbb R^d$. Recall that $f\in \mathcal S(\mathbb R^d)$ if and only if the function $f:\mathbb R^d\to \mathbb R$ is smooth and any  partial derivatives of~$f$, multiplied by an arbitrary polynomial on $\mathbb R^d$ is a bounded function.  Note that $\mathcal S(\mathbb R^d)^*$ includes $\mathcal S(\mathbb R^d)'$, the topological dual of $\mathcal S(\mathbb R^d)$, the Schwartz space of tempered distributions. It is well known that $\mathcal S(\mathbb R^d)$ is a commutative associative algebra for the point-wise multiplication of functions. We also define $\Upsilon\in S(\mathbb R)'\subset S(\mathbb R)^*$ by 
$\langle \Upsilon,f\rangle:=\int_{\mathbb R}f(x)\,dx$. The generating function of a lifted Sheffer sequence is again of the form \eqref{dfghjk0987654dcv}. \end{example}

We note that both Examples~\ref{cxzfrzrzrs} and \ref{fxdtsts6iu} can be extended to the case of complex-valued functions from the corresponding spaces $\mathcal D(\mathbb R^d,\mathbb C)$ and  
$\mathcal S(\mathbb R^d,\mathbb C)$. 

\begin{example}Let $(X,\mathcal F)$ be a measurable  space, i.e., $X$ is a set and $\mathcal F$ is a $\sigma$-algebra on~$X$. Let $B(X,\mathbb F)$ denote the vector space of all $\mathbb F$-valued measurable bounded functions on~$X$. Then $B(X,\mathbb F)$ is a commutative associative algebra for the point-wise product of functions. For a finite measure $\mu$ on $(X,\mathcal F)$, we define $\Upsilon\in B(X)^*$ by $\langle \Upsilon,f\rangle:=\int_Xf\,d\mu$ ($f\in B(X,\mathbb F)$).
Then the generating function of the lifted Sheffer sequence has the following form, for $\omega\in B(X,\mathbb C)^*$ and $f\in B(X,\mathbb F)$, 
\begin{equation*}\sum_{n=0}^\infty\frac1{n!}\langle S_n(\omega),f^{\otimes n}\rangle=\exp\bigg[\langle\omega,b(f)\rangle+\int_Xa(f(x))\,d\mu (x)\bigg].\end{equation*}
For each $\Lambda\in\mathcal F$ with $\mu(\Lambda)>0$, the indicator function of $\Lambda$, denoted by $\chi_\Lambda$, is an idempotent element of this algebra: $\chi_\Lambda^2=\chi_\Lambda$. Furthermore, for any $k$ mutually disjoint sets $\Lambda_1,\dots,\Lambda_K$  in $\mathcal F$ with $\mu(\Lambda_i)>0$ for all $i=1,\dots,K$, we have 
$\chi_{\Lambda_i}\chi_{\Lambda_j}=\delta_{ij}\chi_{\Delta_i}$.
Hence, we can apply to this case a proper modification of Corollary~\ref{783eyeh32} that takes into account that $\langle \Upsilon,\chi_{\Lambda_i}\rangle=\mu(\Lambda_i)$ is not necessarily equal to $1$, compare with Proposition~\ref{dfgh37892g2}.
\end{example}

\begin{example}Let $n\geq2$ and let $V=\mathcal M_n(\mathbb F)$ be the vector space of all $n\times n$ matrices with entries from
  $\mathbb F$.  The $\mathcal M_n(\mathbb F)$ is an algebra under addition of matrices, multiplication by a constant and multiplication of matrices. The dual space $\mathcal M_n(\mathbb F)^*$  can be identified with $\mathcal M_n(\mathbb F)$ if the dual pairing between $\omega=[\omega_{ij}]_{i,j=1,\dots,n}\in \mathcal M_n(\mathbb F)^*=\mathcal M_n(\mathbb F)$ and $M=[m_{ij}]_{i,j=1,\dots,n}\in\mathcal M_n(\mathbb F)$ is given by 
$\langle \omega,M\rangle:=\sum_{i,j=1}^n \omega_{ij}m_{ij}$. 
The idempotent elements of the algebra $\mathcal M_n(\mathbb F)$ are exactly the idempotent matrices from $\mathcal M_n(\mathbb F)$. A natural candidate for the functional $\Upsilon\in \mathcal M_n(\mathbb F)^*$ is the trace of the matrix. Thus, the generating function of the lifted Sheffer sequence has the following form: for $\omega,M\in \mathcal M_n(\mathbb F)$,
\begin{equation}\label{fxzrasrysu5}
\sum_{n=0}^\infty\frac1{n!}\langle S_n(\omega),M^{\otimes n}\rangle=\exp\big[\langle\omega,b(M)\rangle+\operatorname{Tr}a(M)\big],
\end{equation} 
where $a(M):=\sum_{k=1}^\infty a_kM^k$, $b(m):=\sum_{k=1}^\infty b_kM^k$. 
\end{example}

\begin{example}\label{hish9wss9ss9}
For each $n\geq2$, consider the embedding $I_n:\mathcal M_n(\mathbb F)\to \mathcal M_{n+1}(\mathbb F)$ given by $I_nM=\widetilde M$, where $M=[m_{ij}]_{i,j=1,\dots,n}$, $\widetilde M=[\tilde m_{ij}]_{i,j=1,\dots,n+1}$, with $\tilde m_{ij}=m_{ij}$ if $\max\{i,j\}\leq n$ and $\tilde m_{ij}=0$ otherwise. In view of this embedding, we may treat $\mathcal M_n(\mathbb F)$ as a vector subspace of $\mathcal M_{n+1}(\mathbb F)$. Obviously, $\mathcal M_n(\mathbb F)$ is, in fact, a subalgebra of $\mathcal M_{n+1}(\mathbb F)$. Consider the vector space  $\mathcal M_{\mathrm{fin}}(\mathbb F):=\bigcup_{n=1}^\infty \mathcal M_n(\mathbb F)$. Then $\mathcal M_{\mathrm{fin}}(\mathbb F)$ is an  algebra for addition of matrices, multiplication by a constant, and multiplication of matrices. The dual space $\mathcal M_{\mathrm{fin}}(\mathbb F)^*$ can be identified with $\mathcal M_\infty(\mathbb F)$, the space of all infinite matrices $\omega=[\omega_{ij}]_{i,j\in\mathbb N}$ and the dual pairing between $\omega=[\omega_{ij}]_{i,j\in\mathbb N}\in \mathcal M_\infty(\mathbb F)$ and $M=[m_{ij}]_{i,j\in\mathbb N}\in\mathcal M_{\mathrm{fin}}(\mathbb F)$ is given by $\langle\omega,M\rangle:=\sum_{i,j=1}^\infty \omega_{ij}m_{ij}$. (The sum on the right-hand side of this formula is, in fact, finite.)
Idempotent elements of the algebra  $\mathcal M_{\mathrm{fin}}(\mathbb F)$ are exactly the idempotent matrices from $\mathcal M_n(\mathbb F)$ with $n\geq 2$. Again a natural candidate for the functional $\Upsilon\in \mathcal M_\infty(\mathbb F)$ is  the trace of the matrix. The generating function of the lifted Sheffer sequence, for $\omega\in \mathcal M_\infty(\mathbb F)$ and $M\in \mathcal M_{\mathrm{fin}}(\mathbb F)$, is given by formula~\eqref{fxzrasrysu5}.  \end{example}

\begin{example} Let $V$ be a Banach $*$-algebra. Then the dual space $V^*$ contains the topological dual $V'$, which is a Banach space. A natural candidate for the functional $\Upsilon$ is a state on $V$, i.e., $\Upsilon\in V'$ satisfies $\Upsilon(v^*\diamond v)\ge0$ for all $v\in V$. In the case where $V$ contains the identity element $1$, one may use a unital state  $\Upsilon$, which satisfies $\Upsilon(1)=1$. 
\end{example}

\begin{center}
Acknowledgements 
\end{center}
AA is grateful to Taif University for the financial support of his PhD studies at Swansea University. 
The authors would like to thank Maria Jo\~{a}o Oliveira for numerous useful discussions and her feedback on the preliminary version of the paper.

\end{document}